\numberwithin{equation}{section}
\newtheorem{proposition}{Proposition}[section]
\newtheorem{theorem}[proposition]{Theorem}
\newtheorem{lemma}[proposition]{Lemma}
\newtheorem{corollary}[proposition]{Corollary}
\newtheorem{definition}[proposition]{Definition}
\theoremstyle{definition}
\newtheorem{remark}[proposition]{Remark}
\newtheorem{example}[proposition]{Example}
\newcommand{\N}{\mathbb{N}}     
\newcommand{\R}{\mathbb{R}}     
\def\r{\R}
\newcommand{\calA}{\mathscr{A}}
\newcommand{\calC}{\mathscr{C}}
\newcommand{\calK}{\mathscr{K}}
\newcommand{\calO}{\mathscr{O}}
\DeclareMathOperator{\cl}{cl}				
\newcommand{\ox}{\calO(X)}
\newcommand{\cx}{\calC(X)}
\newcommand{\ax}{\calA(X)}
\newcommand{\ocx}{\calO_{c}(X)}
\newcommand{\ccx}{\calC_{c}(X)}
\newcommand{\osx}{\calO_{s}(X)}
\newcommand{\csx}{\calC_{s}(X)}
\newcommand{\asx}{\calA_{s}(X)}
\newcommand{\ossx}{\calO_{ss}(X)}
\newcommand{\assx}{\calA_{ss}(X)}
\newcommand{\bossx}{\calO_{ss}^{*}(X)}
\newcommand{\ksx}{\calK_{s}(X)}
\newcommand{\bosx}{\calO_{s}^{*}(X)}
\newcommand{\bcox}{\calK_{0}(X)}
\newcommand{\kox}{\calK_{0}(X)}
\newcommand{\cox}{\calC_{0}(X)}
\newcommand{\oox}{\calO_{0}(X)}
\newcommand{\aox}{\calA_{0}(X)}
\renewcommand{\O}{\emptyset}
\def\sm{\setminus}
\def\cl{\overline}
\def\ol{\overline}
\def\se{\subseteq}
\def\sc{\sqcup}
\def\bsc{\bigsqcup}
\def\bc{\bigcup}
\def\eps{\epsilon}
\def\la1{\lambda_1}
\def\la2{\lambda_2}
\def\la0{\lambda_{0}}
\def\la{\lambda}
\begin{document}

\title{Semisolid sets and topological measures}
\author{Svetlana V. Butler\footnote{Department of Mathematics,  University of California, Santa Barbara,  552 University Rd, Isla Vista, CA 93117, USA. 
Email: svtbutler@ucsb.edu, svetbutler@gmail.com}
}
\maketitle

\maketitle
\begin{abstract}
This paper is one in a series that investigates topological measures 
on locally compact spaces. A topological measure is a set function which is finitely additive on the collection of open and compact sets, inner regular on open sets, and outer regular on closed sets. We examine semisolid sets and give a way of constructing topological measures from solid-set functions on locally compact, connected, locally connected spaces. 
For compact spaces our approach produces a simpler method than the current one. 
We give examples of finite and infinite topological measures on 
locally compact spaces and present an easy way to generate topological measures on
spaces  whose one-point compactification has genus 0.
Results of this paper are necessary for various methods for constructing topological measures, give additional properties of topological measures, and provide a tool
for determining whether two topological measures or quasi-linear functionals are the same.  
\end{abstract}

\medskip\noindent
{\bf AMS Subject Classification (2010):} Primary 28C15; Secondary 28C99, 54E99, 54H99.

\bigskip\noindent
{\bf Keywords:} topological measure, solid-set function, semisolid set, solid set


\section{Introduction}

This paper deal with the theory of topological measures and quasi-linear functionals.  
The origins of the theory are connected to quantum physics 
and have a fascinating history. 
Mathematical interpretations of quantum physics by G. W. Mackey and R. V. Kadison  (\cite{MackeyPaper}, \cite{MackeyBook}, \cite{Kadison}) 
led to very interesting mathematical problems. 
Let $ R$ be a von Neumann algebra, 
and let $P$ be the lattice of orthogonal projections in $R$. 
in $P$. 
The extension problem 
asks whether given a measure $\mu$ on $P$ there exists a positive state $ \rho$ on $R$ such that
$\rho | P = \mu$. A. Gleason (\cite{Gleason}) obtained the first important affirmative answer when 
$R $ is the family of all bounded linear operators on a separable Hilbert space $H$ with $dim\,  H \ge 3$.
The extension problem 
can be viewed as a special case of the following linearity problem for quasi-states 
(see \cite{Aarnes:PhysStates69},  \cite{Aarnes:QuasiStates70}).
Let $ \mathscr{A}$ be a $C^*-$algebra with identity $1$. A quasi-state is a function $\rho:  \mathscr{A} \longrightarrow \mathbb{C} $ 
which is a state on each  $C^*-$subagebra of $ \mathscr{A}$ 
generated by a single self-adjoint $a \in \mathscr{A}$ and $1$, and which satisfies $ \rho(a + ib) = \rho(a) + i \rho(b)$ for  self-adjoint $a,b \in  \mathscr{A}$. 
The problem is to determine whether $\rho$ is linear. 
In \cite[Theorem 1]{Aarnes:PhysStates69} J. Aarnes claimed that any positive quasi-linear functional $ \rho$ on an abelian 
$C^*-$algebra  is linear.  However, C. Akemann and M. Newberger found a gap in the proof (see \cite{Akemann}). It turned out that the gap was
unbridgeable, 
which J. Aarnes demonstrated almost twenty years later.
Any abelian unital $C^*$-algebra is isomorphic to $C(X)$ for some compact Hausdorff space $X$, and in his seminal paper  \cite{Aarnes:TheFirstPaper}  
Aarnes introduced  set functions generalizing measures 
(initially called quasi-measures, now topological measures)
and corresponding quasi-linear functionals on $C(X) $ for a compact Hausdorff space. 
Quasi-linear functionals (also called quasi-integrals) are functionals that are linear on singly generated subalgebras, but in general not linear. 
On locally compact spaces, there is an order-preserving bijection between quasi-linear functionals 
and compact-finite topological measures, which is also "isometric" when topological measures are finite (\cite{Aarnes:TheFirstPaper}, 
 \cite{Butler:QLFLC}, \cite{Butler:ReprDTM}).

Interestingly, quasi-linear functionals are also related to the mathematical model of quantum mechanics of von Neumann (\cite{vonN}).
Let $ \mathscr{A}$ be the algebra of observables in quantum mechanics. In a simple form, $ \mathscr{A}$ is a space of Hermitian operators 
on a finite dimensional Hilbert space. In von Neumann's definition, a quantum state is a linear positive normalized functional on $ \mathscr{A}$.  
A number of physicists disagreed with the additivity axiom $\rho(A+ B) = \rho(A) + \rho(B)$, 
arguing that it makes sense only if observables $A$ and $B$ are 
simultaneously measurable, i.e commute. Two Hermitian operators on a finite dimensional Hilbert space commute iff 
they can be written as the polynomials of the same Hermitian operator.
This justifies the following modifications of the additivity axiom: 
($*$)  $\rho(A+ B) = \rho(A) + \rho(B)$ if $A, B$ belong to a singly generated subalgebra of $ \mathscr{A}$. 
($**$) $\rho(A+ B) = \rho(A) + \rho(B)$ if $[A,B]_{\hbar} = 0$, where the bracket $[A,B]_{\hbar} = -\frac{i}{ \hbar} (AB - BA)$, 
and $\hbar$ is the Planck constant. 
A positive homogeneous functional with additivity as in ($*$) is a positive quasi-linear functional introduced by Aarnes, while  
 additivity as in ($**$) leads to the notion of a Lie quasi-state (see \cite{EPLie}, \cite[Sect. 5.6]{PoltRosenBook}).
For more information about the physical interpretation of quasi-linear functionals 
see \cite{Aarnes:PhysStates69}, \cite{Aarnes:QuasiStates70}, \cite{Aarnes:TheFirstPaper}, 
\cite{Entov},  \cite{EntovPolterovich}, \cite{EPZ},  \cite{EPZphys}, \cite{PoltRosenBook}.
 
M. Entov and L. Polterovich first linked the theory of quasi-linear functionals and topological measures to symplectic geometry. 
Their seminal paper \cite{EntovPolterovich} was followed by extensive research  in which 
topological measures and quasi-linear functionals are used in connection with rigidity phenomenon in symplectic geometry. 
Topological measures and quasi-linear functionals became an indispensable part of function theory on symplectic manifolds, 
which is the subject of an excellent monograph \cite{PoltRosenBook}.  
 
Entov and Polterovich introduced symplectic quasi-states and partial symplectic quasi-states, 
which are subclasses of quasi-linear functionals
(see \cite{EntovPolterovich}, \cite{Entov}, \cite{PoltRosenBook}). 
Symplectic quasi-states exist on a variety of manifolds, including $\mathbb{C} P^n$, complex Grassmanian, $S^2$, 
$S^2 \times S^2$ (see  \cite{Entov}, \cite{EntovPolterovich}, \cite{EntovPolterovich08}, \cite[Ch. 5]{PoltRosenBook}).
On a closed oriented surface any positive quasi-linear functional is a symplectic quasi-state (see \cite[Ch. 5]{PoltRosenBook}).  
Symplectic quasi-states and topological measures are closely related to and can be obtained by homogeneous quasi-morphisms. 
One can also determine that a quasi-morphism is not a homomorphism by showing that a certain quasi-state is not linear, 
i.e. a certain topological measure is not a measure (see \cite{Lanzat}, for example). 
Topological measures can be used to 
distinguish Lagrangian knots that have identical classical invariants (\cite[Sect. 6.2, Sect. 12.6]{PoltRosenBook}).
Symplectic quasi-states help to determine how well a pair of functions can be approximated by a pair of Poisson commuting functions, 
and, more generally, provide bounds for the profile function ( \cite{BEP}, \cite[Sect. 4.3]{Entov}, \cite{EPZ}, \cite[Sect. 8.3]{PoltRosenBook}). 
Symplectic quasi-states, unlike Langrangian Floer theory, allow one to prove results about nondisplaceability 
for singular sets (\cite[Sect. 4.1, Sect. 4.5]{Entov}). 
A nice account of applications of topological measures and symplectic quasi-states to symplectic geometry can be found 
in \cite[Ch.6]{PoltRosenBook}. 
  
For a (symplectic) quasi-state $\rho$ we may define $ \pi(f,g) =  |\rho(f+g) - \rho(f) - \rho(g) |$. 
The functional $\pi$ (which is nontrivial iff $ \rho$ is not linear, i.e. the corresponding topological measure is not a measure) 
is important in a number of interesting results in symplectic geometry. 
The Poisson bracket $\{f,g\}$ of functions $f,g$ involves first derivatives of the functions, 
and at first glance there is no restriction on change in the uniform norm of  $\{f,g\}$ resulting from perturbations in $f,g$. 
For a symplectic quasi-state $\rho$ there is an estimate  $ \pi(f,g)  \le const  \, \sqrt{ \| \{f,g \} \|}$, and, therefore, 
there is such a restriction (see \cite{EntPoltRosen}, \cite{EPZ}). For this remarkable phenomenon
as well as for a discussion of how $ \pi(f,g) $ appears in the context of simultaneous measurement of noncommuting observables $f,g$ and provides 
a lower bound for the error,  see \cite{EPZ}.  
 
Function theory on symplectic manifolds would not be possible without (a) the theory of topological measures and quasi-linear functionals and 
(b) rigidity phenomenon.   
The $C^0$-rigidity property holds for open or closed manifolds, and it allows one to extend the notion of Poisson commutativity from smooth 
to continuous functions (see \cite{EPRigid}, \cite[Sect. 2.1]{PoltRosenBook}). 
At the moment, function theory on symplectic manifolds is mostly developed for closed manifolds, perhaps because until very recently
the theory of topological measures  and  quasi-linear functionals dealt almost exclusively with the compact case.
We believe that results of this paper 
(together with other recent papers devoted to the theory of topological measures and quasi-linear functionas on locally compact spaces) 
may allow 
extension of the fascinating function theory on symplectic manifolds to nonclosed manifolds and lead the way to
new contributions.

The theory of quasi-linear functionals is connected with the theory of Choquet integrals.  
If $\mu$ is a topological measure, the quasi-linear functional $ \int_X f \, d\mu$  is a symmetric Choquet integral 
(see \cite{Butler:QLFLC}, \cite{Butler:ReprDTM} and \cite[Ch. 7]{DD}).
Many results about Choquet integrals are for a supermodular (also called 2-monotone) or totally monotone set function, 
and/or for a set function whose domain is a $\sigma$-algebra, an algebra, or is closed under intersection and union.  
None of this is applicable for topological measures, so the results of Choquet theory do not automatically translate 
for quasi-linear functionals. 
Nevertheless, we do have some of the typical results of Choquet integrals, such as properties of being  monotone, homogeneous, and
additive on comonotonic functions, and they hold 
sometimes under weaker conditions. 
Some results for quasi-linear functionals are stronger than those for Choquet integrals; others 
show that some assumptions can not be weakened in Choquet theory results. 
The interconnection of the  two theories deserves more investigation.  
This will require deeper understanding of topological measures, of which this paper is a part. 

Topological measures and deficient topological measures are defined on open and closed subsets of a topological space, 
which means that there is no algebraic structure on the  domain.  
They lack subadditivity and other properties typical for measures, and many standard techniques 
of measure theory and functional analysis do not apply to them. 
Nevertheless, many classical results of probability theory hold for topological measures. 
These include Aleksandrov's Theorem for equivalent definitions of weak convergence of  topological and deficient topological measures. 
There is also a version of Prokhorov's Theorem, which relates the existence of a weakly convergent subsequence in any sequence
in a family of topological measures  
to the characteristics of being a uniformly bounded in variation and uniformly tight family.  
It is also possible to define Prokhorov and Kantorovich-Rubenstein metrics and show that convergence in either of them   
implies weak convergence of topological measures.  
See \cite{Butler:WkConv}.

There are connections of the theory of quasi-linear functionals and topological measures with symplectic geometry, probability theory,
Choquet theory, fractals, etc. 
Current results are just scratching the surface of interesting and fruitful investigations in different directions. 
Until very recently,
other than paper  \cite{Alf:ReprTh} and a couple of preprints, including \cite{Aarnes:LC}, 
there were no works devoted to quasi-linear functionals and topological measures on locally compact spaces. 
This 
impeded both the development of the field and its connection with other areas of mathematics. 
To remedy the situation, the author has written several papers extending the theory to the locally compact setting. 
The current paper, devoted to semisolid sets and construction of topological measures on locally compact spaces, is a key part of that series.
This paper is for
anyone interested in (1) learning about quasi-linear functionals  on locally compact 
noncompact spaces or on compact spaces; (2) further study 
of quasi-linear functionals, signed quasi-linear functionals, and other related nonlinear functionals; 
(3) applying topological measures and quasi-linear functionals in other areas of mathematics.  

Aarnes's fundamental paper  \cite{Aarnes:ConstructionPaper}, devoted to construction of topological measures on compact spaces, is
perhaps the most technically difficult of all papers on topological measures in the compact case. 
The construction technique employed 
in \cite{Aarnes:ConstructionPaper} 
was later nicely simplified by D. Grubb, but, unfortunately, the simplification was never published.  
Transitioning to the locally compact situation is not mechanical. (For one thing, 
on a compact space we work with open sets and closed sets, while on a locally compact space 
the focus is on compact sets and open sets, which are no longer complements of each other. One also has to deal with unbounded sets.) 
A  preprint by Aarnes \cite{Aarnes:LC} is devoted to extension of the results from \cite{Aarnes:ConstructionPaper} 
to the locally compact case.
While this work contains many excellent ideas, it is not entirely satisfactory. It is very technical, long,
sometimes proves only parts of stated theorems, at times askes the reader to adapt lengthy proofs from other papers to its subject matter,
and does not quite do what is needed (see, for instance,  \cite[Sect. 6]{Aarnes:LC} 
where examples are obtained by a method which in general can produce a trivial topological measure from a non-trivial initial set function). 
It has also never been published in a mainstream journal and has remained a hard to obtain and even harder 
to understand preprint. 

In this paper we (I) introduce a concept of semisolid sets on locally compact spaces 
and study the structure of solid and semisolid sets 
(II) develop an approach for constructing topological measures on Hausdorff locally compact spaces. 
Our results allows us to extend a solid-set function 
to a topological measure on $X$ when $X$ is a Hausdorff, locally compact, connected, and locally connected space. 
The restriction of a topological measure to solid sets with compact closure is a solid-set function that uniquely determines the topological measure. 
We obtain an easy way to construct topological measures on noncompact locally compact spaces whose one-point compactification has genus 0. 
Thus, we are able to produce a variety of finite and infinite topological measures on $\mathbb{R}^n$, half-spaces, punctured balls, and so on.  
When $X$ is compact our approach produces a simpler method for constructing a topological measure from a solid-set function than the one in 
\cite{Aarnes:ConstructionPaper}  (the only method currently available).
Results of this paper are at the core of obtaining various methods for constructing topological measures and quasi-linear functionals; 
they are essential for studying their properties and also immediately provide some properties of topological measures; they give an effective method 
for determining whether two topological measures or quasi-linear functionals are the same.  

The paper is organized as follows. In Section \ref{Prelim} we give necessary topological preliminaries. 
In Section \ref{SolidSemisolid} we define semisolid and solid sets and study solid hulls of connected sets. 
In Section \ref{Sstructure} we study the structure of solid and semisolid sets. 
In Section \ref{TM} we give a definition and basic properties of topological measures on locally compact spaces, and in Section \ref{SSF} 
we do the same for solid-set functions. In Section \ref{ExtBssKc} on a locally compact, connected, and locally connected space
we extend a solid-set function from bounded solid sets to compact connected and bounded semisolid sets. 
In Section \ref{BCOX} the extension is done to the 
finite unions of disjoint compact connected sets, and in Section \ref{ExttoTM} to open and closed sets.  
In Section \ref{finAdAll} we show that extension produces a topological measure that is uniquely defined by 
a solid-set function (see Theorem \ref{extThLC} and Theorem \ref{ExtUniq}). 
In Section \ref{IrrPartInfo} we discuss irreducible partitions and genus of a space.
In Section \ref{SSFAarnes} we define a solid-set function on a compact space in a different way 
and in Section \ref{Dan'sExtension} we extend it to a topological measure. 
In Section \ref{ExamplesC} we give examples of topological measures on compact spaces.
In Section \ref{ExamplesLC} we give examples and present 
an easy way (Theorem \ref{tmXtoXha}) to generate topological measures on  Hausdorff, locally compact, connected, and locally connected spaces  
whose one-point compactification has genus 0.


The spaces we consider in this paper are Hausdorff. 

In this paper by a component of a set we always mean a connected component. We denote by $\overline E$ the closure of a set $E$ 
and by $\partial E$ the boundary of $E$. We denote by $ \bigsqcup$ a union of disjoint sets, and by $|J|$ the cardinality of a finite set $J$. 
A set $E$ is co-connected if its complement is connected. 
A set $A \subseteq X$ is called bounded if $\overline A$ is compact. 

Several collections of sets will be used often.   They include:
$\mathscr{O}(X)$,  the collection of open subsets of   $X $; $\mathscr{C}(X)$,  the collection of closed subsets of   $X $; and
$\mathscr{K}(X)$,  the collection of compact subsets of   $X $. Let $\mathscr{A}(X) = \mathscr{C}(X) \cup \mathscr{O}(X)$.
If $X$ is compact, of course, $\mathscr{C}(X) = \mathscr{K}(X)$. 

Often we will work with open, compact or closed sets with particular properties.
We use subscripts $c, s$ or $ ss$ to indicate (open, compact, closed) sets that are, respectively,  connected, solid, or semisolid. For example,
$\mathscr{O}_c(X)$   is the collection of open connected subsets of  $X$, and 
$ \mathscr{K}_s(X)$  is the collection of compact solid subsets of  $X$. 

Given any collection $\mathscr{E}$ of subsets of $X$ we denote by $\mathscr{E}^*$ the subcollection of all bounded sets belonging to $\mathscr{E}$. 
For example,  
$\mathscr{A}^{*}(X) = \mathscr{K}(X) \cup \mathscr{O}^{*}(X)$ is the collection of compact and bounded open sets, and 
$\mathscr{A}_{ss}^{*}(X) = \mathscr{K}_{ss}(X) \cup \mathscr{O}_{ss}^{*}(X) $ is the collection of  compact semisolid  and bounded open semisolid  
sets. By $\mathscr{K}_{0}(X)$ we denote the collection of finite unions of disjoint compact connected sets.

\begin{definition}
A nonnegative set function $ \mu$ on a family of sets that includes compact sets is called compact-finite if $  \mu(K) <\infty$ for each compact $K$.
A set function $ \mu$ is monotone on a collection of sets  $\mathscr{E}$ if $ \mu(A) \le \mu(B)$ whenever $ A \subseteq B, A, B \in \mathscr{E}$.
A nonnegative set function $\mu$ is called simple if it only assumes values $0$ and $1$; $\mu$ on $X$ is finite if $\mu(X) < \infty$.
\end{definition}

We consider set functions that are not identically $\infty$.

\section{Preliminaries} \label{Prelim}

This section contains necessary topological preliminaries. 
Some results in this section are known, but sometimes we give proofs for the reader's convenience.  

\begin{remark} \label{netsSETS}
An application of compactness (see, for example, \cite[Corollary 3.1.5]{Engelking})  shows that \\
(i)
If $K_\alpha \searrow K, K \subseteq U,$ where $U \in \mathscr{O}(X),\  K, K_\alpha \in \mathscr{C}(X)$, 
and $K$ and at least one of $K_\alpha$ are compact, 
then there exists $\alpha_0$ such that $ K_\alpha \subseteq U$ for all $\alpha \ge \alpha_0$. \\
(ii)
If $U_\alpha \nearrow U, K \subseteq U,$ where $K \in \mathscr{K}(X), \ U, \ U_\alpha \in \mathscr{O}(X)$ then there exists $\alpha_0$ such that
$ K \subseteq U_\alpha$ for all $\alpha \ge \alpha_0$.
\end{remark}

\begin{remark} \label{OpenComp}
(a) Suppose $X$ is connected,  $ U \in \mathscr{O}_{c}(X)$ and $F \in \mathscr{C}_{c}(X)$ are disjoint sets. 
If $\overline U \cap F \ne \emptyset$ then $U \sqcup F $ is connected. \\
(b) If $X$ is locally compact and locally connected, for each $x  \in X$ and each open set $U$ containing $x$, there is
a connected open set $V$ such that $ x \in V \subseteq \overline V \subseteq U $ and $\overline V$ is compact.\\
(c) If $V = \bigsqcup_{t \in T} V_t$ where $V$ and $V_t $ are open sets, then $\overline{ V_t} \cap V_r = \emptyset$ for $ t \ne r$. 
In particular, if $X$ is locally connected, and $V = \bigsqcup_{t \in T} V_t$  is a decomposition of an open set $V$  into connected components,  
then all components $V_t$ are open, and $\overline{ V_t} \cap V_r = \emptyset$ for $ t \ne r$. \\
(d) If $ \{ E_t: \, t \in T \}$ is a family of connected sets such that any two of the sets have a nonempty intersection then $ \bc_{t \in T} E_t$ is connected. 

\end{remark}

\begin{lemma} \label{prelLemma}
Let $U$ be an open connected subset of a locally compact  and locally connected set $X$. 
Then for any $x, y \in U$ there is $ V_{xy}  \in \mathscr{O}_{c}^{*}(X)$ such that $ x,y \in V_{xy} \subseteq \overline{ V_{xy}} \subseteq U$.
\end{lemma}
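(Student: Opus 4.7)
The plan is to set up an equivalence relation on $U$ whose classes are open, and then use the connectedness of $U$ to conclude that there is only one class. Define $x \sim y$ if and only if there exists $V \in \mathscr{O}_{c}^{*}(X)$ with $\{x,y\} \subseteq V \subseteq \overline{V} \subseteq U$. The lemma is exactly the assertion that $\sim$ has a single equivalence class on $U$.

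First I would verify that $\sim$ is an equivalence relation. Reflexivity is immediate from Remark \ref{OpenComp}(b): for each $x \in U$ there is a connected open $V$ with $x \in V \subseteq \overline{V} \subseteq U$ and $\overline{V}$ compact, so $V \in \mathscr{O}_{c}^{*}(X)$. Symmetry is trivial. For transitivity, suppose $x \sim y$ via $V_1$ and $y \sim z$ via $V_2$. Set $W = V_1 \cup V_2$. Since $y \in V_1 \cap V_2$, the two connected sets $V_1, V_2$ meet, so by Remark \ref{OpenComp}(d) the union $W$ is connected; it is open as a union of open sets, and $\overline{W} = \overline{V_1} \cup \overline{V_2}$ is a union of two compact sets, hence compact, so $W$ is bounded. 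Finally $\overline{W} \subseteq U$. Thus $W \in \mathscr{O}_{c}^{*}(X)$ witnesses $x \sim z$.

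Next I would show that each equivalence class is open in $U$. If $x \sim y$ and $V \in \mathscr{O}_{c}^{*}(X)$ witnesses this, then for any $z \in V$ the same $V$ witnesses $z \sim y$, hence $z \sim x$. Therefore $V$ is contained in the class $[x]$, and $[x]$ is open.

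Since the classes partition $U$ into disjoint open subsets and $U$ is connected, there is exactly one class, so $x \sim y$ for all $x, y \in U$. I do not anticipate a genuine obstacle here; the only point requiring a little care is checking in the transitivity step that the union of two bounded open connected sets sharing a point lies in $\mathscr{O}_{c}^{*}(X)$ with closure still inside $U$, which is handled by the items of Remark \ref{OpenComp} already available.
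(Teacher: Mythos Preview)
Your argument is correct and is essentially the same as the paper's: the paper fixes $x$, lets $A$ be the set of $y$ admitting such a $V_{xy}$, and shows $A$ and $U\setminus A$ are both open in $U$, which is your equivalence-class argument rephrased. You make the transitivity step (taking the union of two witnessing sets) explicit, whereas the paper leaves the analogous step implicit in its claim that $V\subseteq U\setminus A$; otherwise the proofs coincide.
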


\begin{proof}
Fix $x \in U$. Let 
$A = \{ y \in U :  \exists V_{xy}  \in \mathscr{O}_{c}^{*}(X) \mbox{  such that  } 
x,y \in V_{xy} \subseteq \overline{ V_{xy}} \subseteq U \}.  $
Clearly, $A$ is open.
The set $ U \setminus A$ is also open: if $y \in U \setminus A$ pick
by Remark \ref{OpenComp} $V \in \mathscr{O}_{c}^{*}(X)$ such that  $y \in V \subseteq \overline V \subseteq U$,
then $V \subseteq U \setminus A$. 
Since $A,  U \setminus A $ are open, and  $x  \in A$, we have $A = U$.
\end{proof}

\begin{lemma} \label{easyLeLC}
Let $K \subseteq U, \ K \in \mathscr{K}(X),  \ U \in \mathscr{O}(X)$ in a locally compact space $X$.
Then there exists a set  $V \in \mathscr{O}^{*}(X)$ such that 
$ K \subseteq V \subseteq \overline V \subseteq U. $ (See, for example, \cite[Chapter XI, 6.2]{Dugundji})
\end{lemma}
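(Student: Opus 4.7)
The plan is to reduce the global statement to a local one using compactness of $K$, so the main work is to produce, for each individual point, a bounded open neighborhood whose closure sits inside $U$, and then to glue finitely many such neighborhoods together.

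First I would fix $x \in K \subseteq U$ and use local compactness together with the Hausdorff assumption (standing throughout the paper) to produce an open set $V_x$ with $x \in V_x \subseteq \overline{V_x} \subseteq U$ and with $\overline{V_x}$ compact. Concretely, local compactness gives an open $W_x$ containing $x$ whose closure is compact; intersecting with $U$ and shrinking once more inside the compact Hausdorff (hence normal, hence regular) subspace $\overline{W_x}$ separates $x$ from the closed set $\overline{W_x}\setminus(U \cap W_x)$, producing the desired $V_x$. This is the step that genuinely requires Hausdorffness plus local compactness, and is essentially the content cited from \cite[Chapter XI, 6.2]{Dugundji}; I would simply invoke that fact rather than redo the regularity argument.

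Next, the family $\{V_x : x \in K\}$ is an open cover of the compact set $K$, so one can extract a finite subcover $V_{x_1}, \dots, V_{x_n}$. Setting
\[
V = V_{x_1} \cup \cdots \cup V_{x_n},
\]
we get $K \subseteq V \in \mathscr{O}(X)$, and
\[
\overline{V} = \overline{V_{x_1}} \cup \cdots \cup \overline{V_{x_n}} \subseteq U.
\]
Since $\overline{V}$ is a finite union of compact sets, it is compact, so $V \in \mathscr{O}^{*}(X)$ in the notation of the paper.

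The main obstacle, as noted, is the single-point step: producing the precompact open $V_x$ with $\overline{V_x}\subseteq U$. Once that standard fact is in hand, the rest is a one-line compactness argument, and the whole lemma takes only a couple of lines.
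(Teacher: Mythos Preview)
Your proof is correct and is exactly the standard argument one finds in the cited reference; the paper itself gives no proof for this lemma and simply refers the reader to \cite[Chapter XI, 6.2]{Dugundji}. There is nothing to compare---your write-up is the expected two-line compactness argument.
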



\begin{lemma} \label{LeConLC}
Let $X$ be a locally compact, locally connected space, $K \subseteq U$, $K \in \mathscr{K}(X)$,  $U \in \mathscr{O}(X)$.
If either $K$ or $U$ is connected there exist  $V \in \mathscr{O}_{c}^{*}(X)$  and $C \in \mathscr{K}_{c}(X)$ such that 
$ K \subseteq V \subseteq C \subseteq U. $ 
One may take $C = \overline V$.
\end{lemma}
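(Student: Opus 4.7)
The plan is to sandwich $K$ in a bounded open set via Lemma \ref{easyLeLC} and then split into two cases depending on which of $K$ or $U$ is connected. In both cases local connectedness gives that the components of the bounded open set are themselves open (Remark \ref{OpenComp}(c)), and once we produce the required connected bounded open $V$, we will take $C = \overline V$, noting that the closure of a connected set is connected.

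First I would apply Lemma \ref{easyLeLC} to obtain $W \in \mathscr{O}^{*}(X)$ with $K \subseteq W \subseteq \overline W \subseteq U$, and let $\{W_\alpha\}_{\alpha \in I}$ be the collection of components of $W$. By Remark \ref{OpenComp}(c) each $W_\alpha$ is open, and the $W_\alpha$ are pairwise disjoint.

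Case 1 (where $K$ is connected) is straightforward: the sets $K \cap W_\alpha$ are pairwise disjoint and relatively open in $K$, so connectedness of $K$ forces all but one of them to be empty, giving $K \subseteq W_{\alpha_0}$ for some $\alpha_0$. Setting $V = W_{\alpha_0}$ gives a connected bounded open set with $\overline V \subseteq \overline W \subseteq U$ compact, and $\overline V$ is connected as the closure of a connected set.

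Case 2 (where $U$ is connected) is the main obstacle: a priori $W$ may have many components meeting $K$, and we have to glue them together without leaving $U$. The approach is to use compactness of $K$ to reduce to finitely many components $W_1,\ldots,W_n$ of $W$ meeting $K$, pick $x_i \in W_i \cap K$, and then invoke Lemma \ref{prelLemma} (which is precisely designed for this: connecting two points of a connected open set by a bounded connected open set whose closure still lies inside) to produce, for each $i \ge 2$, a set $V_i \in \mathscr{O}_c^{*}(X)$ with $x_1, x_i \in V_i \subseteq \overline{V_i} \subseteq U$. Taking
\[
V \;=\; W_1 \cup \cdots \cup W_n \cup V_2 \cup \cdots \cup V_n,
\]
the set $V$ is open, contains $K$, and has compact closure contained in $U$ (a finite union of compact sets each lying in $U$). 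For connectedness, each union $W_1 \cup V_i \cup W_i$ is connected since $x_1 \in W_1 \cap V_i$ and $x_i \in V_i \cap W_i$, and all of these connected pieces share the common set $W_1$, so Remark \ref{OpenComp}(d) yields that $V$ is connected. Then $C = \overline V$ is compact connected, and $V \subseteq C \subseteq U$ as required.
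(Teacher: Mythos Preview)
Your proof is correct. Case 2 is essentially identical to the paper's argument (cover by finitely many connected pieces, then use Lemma \ref{prelLemma} to link them through the connected $U$). In Case 1 you take a slightly different and cleaner route: the paper covers $K$ by finitely many connected bounded neighborhoods $V_{x_i}$ and argues that $\bigcup_i V_{x_i} = \bigcup_i (K \cup V_{x_i})$ is connected because each $K \cup V_{x_i}$ is; you instead pass first through Lemma \ref{easyLeLC} and simply take the component of the resulting bounded open set that contains $K$, which avoids any gluing.
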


\begin{proof}
Case 1:  $ K \in \mathscr{K}_{c}(X)$.   For each $ x \in K$ 
by Remark \ref{OpenComp} there is $ V_x \in \mathscr{O}_{c}^{*}(X)$ such that
$ x \in V_x \subseteq \overline{ V_x} \subseteq U$. By compactness of $K$, we write 
$ K \subseteq V_{x_1} \cup \ldots \cup  V_{x_n}$.
Since 
$ x_i \in K \cap  V_{x_i}$, 
$\, K \cup V_{x_i}$ is connected for each $ i =1, \ldots, n$.
Hence, 
$ V = \bigcup_{i=1}^n V_{x_i} =  \bigcup_{i=1}^n (K \cup V_{x_i}) $
is a bounded open connected set for which 
$ K \subseteq V \subseteq \overline V \subseteq \bigcup_{i=1}^n \overline{ V_{x_i}} \subseteq U. $
Take $C = \overline V$. \\
Case 2: $ U  \in \mathscr{O}_{c}(X)$. 
As in Case 1 we may find   $V_1, \ldots, V_n \in \mathscr{O}_{c}^{*}(X)$ such that
$ K \subseteq V_1 \cup  \ldots  \cup V_n \subseteq \overline{ V_1} \ldots \cup \overline{V_n} \subseteq U .$
Pick $ x_i \in V_i$ for  $i=1, \ldots, n$.  By Lemma \ref{prelLemma} choose
$W_i \in \mathscr{O}_{c}^{*}(X)$ with $ x_1, \ x_i \in W_i \subseteq \overline{ W_i}  \subseteq U$ for $ i=2, \ldots, n$.
The set $V_1 \cup W_i \cup V_i$ is connected for each $ i =2, \ldots, n$. Then 
$ V = \bigcup_{i=1}^n V_i \cup \bigcup_{i=2}^n W_j = \bigcup_{i=2}^n (V_1 \cup W_i \cup V_i)$
is  open connected and 
$ K \subseteq \bigcup_{i=1}^n V_i \subseteq V \subseteq \overline V \subseteq \bigcup_{i=1}^n \overline{ V_i} \cup 
\bigcup_{i=2}^n \overline{ W_i} \subseteq U.$
Again, let $C = \overline V$.
\end{proof}

\begin{lemma} \label{LeCCoU}
Let $X$ be a locally compact, locally connected space. Suppose $K \subseteq U, \ K \in \mathscr{K}(X), \ U \in \mathscr{O}(X)$. 
Then there exists $C \in \mathscr{K}_{0}(X)$ such that $ K \subseteq C \subseteq U$.
\end{lemma}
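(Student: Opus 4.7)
The plan is to approximate $K$ from outside by a finite union of closures of small connected open sets, and then to group those closures into maximal overlapping ``clumps'' so that the clumps are pairwise disjoint and each is compact and connected. This will produce the required element of $\mathscr{K}_0(X)$.

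First, for each $x \in K$ apply Remark \ref{OpenComp}(b) to obtain a connected open set $V_x$ with $x \in V_x \subseteq \overline{V_x} \subseteq U$ and $\overline{V_x}$ compact. By compactness of $K$, finitely many of these suffice: pick $x_1, \ldots, x_n$ with $K \subseteq V_{x_1} \cup \cdots \cup V_{x_n}$. Set $F_i := \overline{V_{x_i}}$ and $C := \bigcup_{i=1}^n F_i$. Then $C$ is compact and $K \subseteq C \subseteq U$; it remains only to show that $C$ is a finite disjoint union of compact connected sets.

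To do this, I introduce the ``chain'' equivalence relation on the index set $\{1,\ldots,n\}$: say $i \sim j$ iff there is a sequence $i = i_0, i_1, \ldots, i_k = j$ with $F_{i_\ell} \cap F_{i_{\ell+1}} \ne \emptyset$ for every $\ell$. This is clearly an equivalence relation, partitioning $\{1,\ldots,n\}$ into finitely many classes $I_1, \ldots, I_m$. For each class $I_r$ set $C_r := \bigcup_{i \in I_r} F_i$, which is compact as a finite union of compacts. I claim that $C_r$ is also connected: picking any fixed $i_0 \in I_r$, every other $j \in I_r$ is linked to $i_0$ by a chain of overlapping $F$'s, so an easy induction (each $F_{i_{\ell+1}}$ is connected and meets the already built connected union $F_{i_0} \cup \cdots \cup F_{i_\ell}$) shows $F_j$ is contained in a connected subset of $C_r$ that contains $F_{i_0}$. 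Therefore $C_r$ is a union of connected sets all meeting $F_{i_0}$, hence connected by Remark \ref{OpenComp}(d).

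Finally, the $C_r$ are pairwise disjoint: if $y \in C_r \cap C_s$ with $r \ne s$, then $y$ lies in some $F_i$ with $i \in I_r$ and some $F_j$ with $j \in I_s$, forcing $F_i \cap F_j \ne \emptyset$ and hence $i \sim j$, contradicting $I_r \ne I_s$. Thus $C = \bigsqcup_{r=1}^m C_r$ is a finite disjoint union of compact connected sets, so $C \in \mathscr{K}_0(X)$ with $K \subseteq C \subseteq U$, as required. I do not anticipate any real obstacle; the only mildly delicate point is the chain-connectedness step, but the induction above makes it transparent.
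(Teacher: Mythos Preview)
Your proof is correct. The approach, however, differs from the paper's. The paper first decomposes $U$ into its connected components $U_i$ (open by local connectedness), uses compactness of $K$ to reduce to finitely many of them, observes that each $K\cap U_i$ is compact, and then invokes Lemma~\ref{LeConLC} (applied to the connected open set $U_i$) to find $C_i\in\mathscr{K}_c(X)$ with $K\cap U_i\subseteq C_i\subseteq U_i$; disjointness of the $C_i$ is then automatic because they lie in distinct components of $U$. Your argument is more self-contained: you bypass Lemma~\ref{LeConLC} entirely and instead cover $K$ by closures of small connected neighborhoods, then use a chain-equivalence relation to clump them into disjoint connected pieces. The paper's route is shorter and more modular (it leverages work already done), while yours is more elementary and would work even without having proved Lemma~\ref{LeConLC} first. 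Both are perfectly valid.
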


\begin{proof}
Let $U = \bigsqcup_{i \in I'} U_i$ be the decomposition  into connected components. Since
$X$ is locally connected, each $U_i$ is open, and by compactness of $K$ there exists  a finite set $I \subseteq I'$ such that $K \subseteq \bigsqcup_{i \in I} U_i$.  
Then $ K \cap U_i  = K \setminus  \bigsqcup_{j \in I, \ j \ne i} U_j$ is a compact set. For each $i \in I$ by Lemma \ref{LeConLC} 
choose $C_i \in \mathscr{K}_{c}(X)$ such that $K \cap U_i \subseteq C_i \subseteq U_i$. The set $C = \bigsqcup_{i \in I} C_i$ is the desired set. 
\end{proof}

\begin{lemma} \label{CmpCmplBdA}
Let $X$ be a connected, locally connected space.
Let $ A \in \mathscr{A}_{c}(X)$ and 
let $B$ be a component of $X \setminus A$. Then
\begin{itemize}
\item[(i)]
If $A$ is open then $B$ is closed and $\overline{A} \cap B  \neq \emptyset$. 
\item[(ii)]
If $A$ is closed then $B$ is open and  $A \cap \overline{B} \neq \emptyset.$ 
\item[(iii)]
$A \sqcup \bigsqcup_{s \in S} B_s$ is connected for 
any family $\{ B_s\}_{s \in S} $ of components of $ X \setminus A$.
\item[(iv)]  
$B$ is connected and co-connected.
\end{itemize}
\end{lemma}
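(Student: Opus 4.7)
The plan is to prove the four parts in order, since (iii) feeds on (i) and (ii), and (iv) feeds on (iii). I will assume throughout that $A$ is nonempty and that components are required to be nonempty (which is standard).

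For (i), the fact that $B$ is closed is automatic: $B$ is a component of the (closed) set $X \setminus A$, hence closed in $X \setminus A$, hence closed in $X$. For the contact property $\overline{A} \cap B \ne \emptyset$, I will argue by contradiction. Suppose $\overline A \cap B = \emptyset$. Then every $b \in B$ has an open neighborhood disjoint from $A$, and by local connectedness I may shrink it to a connected open neighborhood $U_b$ disjoint from $A$. Since $U_b$ is connected, lies in $X \setminus A$, and meets the component $B$ at $b$, we get $U_b \subseteq B$. Thus $B$ is open. But $B$ is also closed and nonempty, so connectedness of $X$ forces $B = X$, contradicting $A \ne \emptyset$. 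Part (ii) is the symmetric argument: components of an open set in a locally connected space are open (so $B$ is open), and if $A \cap \overline B = \emptyset$ then $\overline B \subseteq X \setminus A$ is connected and contains $B$, so $\overline B = B$ by maximality of components; hence $B$ is clopen and nonempty, again forcing $B = X$ and contradicting $A \ne \emptyset$.

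For (iii), fix the decomposition $X \setminus A = \bigsqcup_{t \in T} B_t$ into connected components and fix a subfamily $\{B_s\}_{s \in S}$. In case $A$ is open, each $B_s$ is closed and connected with $\overline A \cap B_s \ne \emptyset$ by (i); in case $A$ is closed, each $B_s$ is open and connected with $\overline{B_s} \cap A \ne \emptyset$ by (ii). In either case Remark \ref{OpenComp}(a) applies and gives that $A \sqcup B_s$ is connected for every $s \in S$. Since all the sets $A \sqcup B_s$ share the point set $A$, their pairwise intersections are nonempty, and Remark \ref{OpenComp}(d) lets me conclude that $A \sqcup \bigsqcup_{s \in S} B_s = \bigcup_{s \in S}(A \sqcup B_s)$ is connected.

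Finally, (iv) is immediate from (iii): $B$ is connected because it is a component, and $X \setminus B$ is the union of $A$ with all components of $X \setminus A$ other than $B$, which is connected by (iii).

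The only nontrivial step is the contradiction argument inside (i) and (ii); everything else is essentially bookkeeping. I expect that step, specifically the passage from $\overline A \cap B = \emptyset$ to $B$ being open via local connectedness and the maximality of components, to be the main (and only) point requiring care.
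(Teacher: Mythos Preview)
Your proof is correct and follows the same approach as the paper: the paper simply says (i) and (ii) are ``not hard'', proves (iii) by citing parts (a) and (d) of Remark~\ref{OpenComp}, and derives (iv) from (iii) exactly as you do. Your argument is a faithful and careful expansion of the paper's sketch, with the clopen-plus-connectedness contradiction for (i) and (ii) being the natural way to fill in those details.
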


\begin{proof}
The proof of (i) and (ii) is not hard. 
(iii) Apply parts (a) and (d) of Remark \ref{OpenComp}. 
(iv) Let $X \setminus A = \bigsqcup_{s \in S}  B_s$ be a decomposition 
into connected components. For each $ t \in S$ 
the set $ X \setminus B_t =A  \sqcup \bigsqcup_{s \ne t}  B_s $  is a connected set  by the previous part. 
\end{proof} 

\begin{lemma} \label{LeAaCompInU}
Let $X$ be a connected, locally connected space.
Let $K \in \mathscr{K}(X), \ K \subseteq U \in \mathscr{O}_{c}^{*}(X)$. Then at most a finite number of connected components of 
$X \setminus$ K are not contained in $U.$
\end{lemma}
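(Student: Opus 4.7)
My plan is to show that every component of $X\setminus K$ that is not contained in $U$ must meet the boundary $\partial U$, and then to exploit compactness of $\partial U$ to conclude that only finitely many such components exist.

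First I would set up the basics. Since $U$ is bounded, $\overline{U}$ is compact, and hence $\partial U = \overline{U}\setminus U$, as a closed subset of $\overline{U}$, is compact. Because $K$ is compact (hence closed) in the Hausdorff space $X$, the set $X\setminus K$ is open, and local connectedness of $X$ then guarantees that each component $B$ of $X\setminus K$ is open in $X$ and closed in $X\setminus K$.

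The crux of the argument is the auxiliary claim that every non-empty component $B$ of $X\setminus K$ meets $U$. The case $B=X$ is immediate, so suppose $B\neq X$. Since $X$ is connected and $B$ is open, $B$ cannot be clopen, so $\overline{B}\setminus B\neq\emptyset$. But $B$ is closed in $X\setminus K$, which gives $\overline{B}\setminus B\subseteq K\subseteq U$. Choosing $p\in U\cap(\overline{B}\setminus B)$ and using that $U$ is open, every neighborhood of $p$ that lies in $U$ must meet $B$, so $B\cap U\neq\emptyset$. With this in hand, if $B$ is a component not contained in $U$, then $B\cap U$ and $B\setminus U$ are both non-empty; connectedness of $B$ together with openness of $U$ forces $B$ to meet $\partial U$.

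To finish, I would use compactness of $\partial U$. Suppose infinitely many components $\{B_\alpha\}$ meet $\partial U$. These components are pairwise disjoint open sets, and their union contains $\partial U$, so by compactness a finite subfamily $B_{\alpha_1},\ldots,B_{\alpha_n}$ already covers $\partial U$. Any other $B_\beta$ in the family would then have $B_\beta\cap\partial U\subseteq B_\beta\cap(B_{\alpha_1}\cup\cdots\cup B_{\alpha_n})=\emptyset$, contradicting that $B_\beta$ meets $\partial U$. Hence only finitely many components of $X\setminus K$ meet $\partial U$, and therefore only finitely many fail to lie inside $U$.

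The main obstacle is the auxiliary claim that every non-empty component of $X\setminus K$ meets $U$: this rules out the possibility of a rogue component sitting entirely inside $X\setminus U$ and thus never touching $\partial U$ at all. It is handled by the interplay of connectedness of $X$ (no proper component of an open set is clopen) with the inclusion $K\subseteq U$ (so the only part of $X$ on which a component of $X\setminus K$ can accumulate lies inside $U$).
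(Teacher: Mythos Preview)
Your proof is correct and follows essentially the same strategy as the paper's: show that every component of $X\setminus K$ meets $U$, deduce that those not contained in $U$ must meet the compact set $\partial U$, and then invoke compactness of $\partial U$ to bound their number. The only difference is in the last step: you use a direct open-cover argument (the disjoint open components covering $\partial U$ admit a finite subcover, so no further component can meet $\partial U$), whereas the paper picks points $x_i\in W_i\cap\partial U$ and argues via a limit point of the sequence; your version is a touch cleaner and sidesteps any worry about sequential versus covering compactness in a general Hausdorff space.
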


\begin{proof}
Let $ X  \setminus K = \bigsqcup_{s \in S} W_s$  be the decomposition of $ X \setminus K$ into connected components.
Each component $ W_s$ intersects $U$ since otherwise we would have 
$W_s \subseteq X \setminus U$, so $\overline{ W_s} \subseteq X \setminus U$, and $\overline{ W_s} \cap K =\emptyset$, 
which contradicts Lemma \ref{CmpCmplBdA}.
Assume that there are infinitely many components of $X \setminus K$ that are not contained in $U$.
Then we may choose components $W_i, \ i=1, 2, \ldots$,  such that $W_i \cap U \neq \emptyset$ and 
$W_i \cap (X \setminus U) \neq \emptyset$ for each $i$. 
Connectedness of $W_i$ implies that $ W_i \cap \partial U \neq \emptyset$ for each $i$.
Let $x_i \in W_i \cap \partial U$. By compactness of $\partial U$, let $x_0 \in \partial U$ 
be the limit point of $(x_i)$. Then $x_0  \in X \setminus U \subseteq X \setminus K = \bigsqcup_{s \in S} W_s$, 
i.e. $x_0 \in W_t $ for some $t \in S$. But then all but finitely many $x_i$ must also be in $W_t$, which is impossible, 
since $W_i \cap W_t =\emptyset$ for $t \neq i$. 
\end{proof}

\begin{corollary} \label{CoBddComp}
Let $X$ be a connected, locally connected space.
Let $K \in \mathscr{K}(X)$ and let $W$ be the union of bounded components of $X \setminus K$. 
Then $W \in \mathscr{O}^{*}(X)$.
\end{corollary}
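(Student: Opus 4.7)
The plan is to verify the two assertions separately: that $W$ is open, and that $W$ has compact closure.

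Openness is immediate. The set $X\setminus K$ is open, and because $X$ is locally connected, Remark \ref{OpenComp}(c) tells us that every connected component of $X\setminus K$ is itself open. Since $W$ is a union of such components, $W$ is open.

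For boundedness, the strategy is to trap $W$ inside a finite union of a single bounded open connected neighborhood of $K$ together with the closures of at most finitely many ``large'' components. Because $X$ is connected (and in the locally compact, locally connected setting of the paper), I apply Lemma \ref{LeConLC} to the pair $K\subseteq X$, using $U=X$ as the connected open set, to obtain some $V\in\mathscr{O}_{c}^{*}(X)$ with $K\subseteq V\subseteq\overline{V}\subseteq X$ and $\overline V$ compact. Lemma \ref{LeAaCompInU} then supplies a finite list $B_1,\dots,B_n$ of components of $X\setminus K$ containing every component that is not contained in $V$; every other component sits inside $V$, hence inside the compact set $\overline V$, and is in particular bounded.

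Now I split the bounded components of $X\setminus K$ into two families: those contained in $V$, whose union lies in $\overline V$; and those among $B_1,\dots,B_n$ that happen to be bounded, say $B_{i_1},\dots,B_{i_m}$, each with compact closure by definition. Consequently
\[
W \;\subseteq\; \overline{V}\cup \overline{B_{i_1}}\cup\cdots\cup \overline{B_{i_m}},
\]
and the right-hand side is closed, so taking closures yields $\overline W\subseteq \overline V\cup \overline{B_{i_1}}\cup\cdots\cup \overline{B_{i_m}}$, a finite union of compact sets. Hence $\overline W$ is compact and $W\in\mathscr{O}^{*}(X)$.

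The only delicate point is producing the bounded open connected neighborhood $V$ of $K$ required to trigger Lemma \ref{LeAaCompInU}; this is precisely what Lemma \ref{LeConLC} provides once we notice that $U=X$ is available as a connected open superset of $K$. Everything else is bookkeeping on the components.
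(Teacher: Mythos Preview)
Your proof is correct and follows essentially the same route as the paper's own argument: choose $V\in\mathscr{O}_c^{*}(X)$ with $K\subseteq V$ via Lemma~\ref{LeConLC}, invoke Lemma~\ref{LeAaCompInU} to confine all but finitely many components inside $V$, and conclude boundedness by a finite union of compacta. The paper compresses your bookkeeping into a single sentence and cites Lemma~\ref{CmpCmplBdA} for openness (your citation of Remark~\ref{OpenComp}(c) is actually cleaner, since $K$ need not be connected), but the structure is identical.
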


\begin{proof}
By Lemma \ref{LeConLC} pick $V \in \mathscr{O}_{c}^{*}(X)$ such that 
$ K \subseteq V$. From Lemma  \ref{LeAaCompInU} it follows that $W$ is bounded. By Lemma 
\ref{CmpCmplBdA} $W$ is open.
\end{proof}

\begin{remark}  \label{ReUnbddComp}
If $A \subseteq B, \, A, B \in  \mathscr{A}(X) $ then each unbounded component 
of $X \setminus B$ is contained in an unbounded component of $ X \setminus A$.
\end{remark}

\begin{lemma} \label{LeUnbddComp}
Let $X$ be a connected, locally connected space.
Assume $ A \subseteq B, \ A, B \in \mathscr{A}^{*}(X)$. Then each unbounded component of $X \setminus B$ 
is contained in an unbounded component of $ X \setminus A$ and each unbounded component
of $ X \setminus A$ contains an unbounded component of $X \setminus B$.
\end{lemma}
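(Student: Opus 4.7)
The first assertion is a direct restatement of Remark \ref{ReUnbddComp}, so I would focus on the second. My plan is to split into two cases according to whether $B \in \mathscr{K}(X)$ or $B \in \mathscr{O}^{*}(X)$, handle the compact case directly using Corollary \ref{CoBddComp}, and reduce the open case to the compact one by passing to $\overline{B}$.

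For the case $B \in \mathscr{K}(X)$, fix an unbounded component $U$ of $X \setminus A$. Since $B$ is bounded, $U \cap B$ is bounded, and by Corollary \ref{CoBddComp} the union $W$ of the bounded components of $X \setminus B$ is itself a bounded open set. Hence $U \cap (B \cup W)$ is bounded, so $U$, being unbounded, must meet $X \setminus (B \cup W)$, which is precisely the union of the unbounded components of $X \setminus B$. Any unbounded component $V$ of $X \setminus B$ meeting $U$ is a connected subset of $X \setminus B \subseteq X \setminus A$ that meets the component $U$ of $X \setminus A$, which forces $V \subseteq U$.

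For the case $B \in \mathscr{O}^{*}(X)$, I would replace $B$ by the compact set $\overline{B}$, which still contains $A$, and apply the previous case to obtain an unbounded component $V$ of $X \setminus \overline{B}$ with $V \subseteq U$. Then $V$ is a connected subset of $X \setminus \overline{B} \subseteq X \setminus B$, so $V$ lies inside a single component $V'$ of $X \setminus B$. This $V'$ is unbounded (it contains the unbounded $V$), and the same connectedness argument as before yields $V' \subseteq U$.

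The main obstacle is that Corollary \ref{CoBddComp} is stated only for compact $K$, so it cannot be applied directly when $B$ is open. The workaround is precisely the passage to $\overline{B}$, together with the elementary observation that an unbounded connected subset of $X \setminus \overline{B}$ is automatically contained in an unbounded component of the larger set $X \setminus B$; this lets us bootstrap the compact case back to the open case without any new technical machinery.
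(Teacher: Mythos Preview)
Your proof is correct and follows essentially the same approach as the paper: both invoke Corollary~\ref{CoBddComp} to see that the union of $B$ with the bounded components of $X\setminus B$ is bounded in the compact case, and both reduce the open case to the compact one by passing to $\overline{B}$ and then pushing the resulting unbounded component of $X\setminus\overline{B}$ forward into a component of $X\setminus B$. The only cosmetic difference is that the paper phrases the compact case as a proof by contradiction while you argue directly that $U$ must meet $X\setminus(B\cup W)$.
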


\begin{proof}
Suppose first that $ A \subseteq K, \ K \in \mathscr{K}(X)$. 
The first assertion is Remark \ref{ReUnbddComp}. Now suppose to the contrary that $E$ is an unbounded component of 
$X \setminus A$ which contains no unbounded components of $X \setminus K$. Then $E$ is contained 
in the union of $K$ and all bounded components of $X \setminus K$. 
By Corollary  \ref{CoBddComp} this union is a bounded set, and so is $E$, which gives a contradiction. 

Now suppose $ A \subseteq U, \ U \in \mathscr{O}^{*}(X)$, so  $ K = \overline U$ is compact.
 Let $E$ be an unbounded component of $X \setminus A$. By the previous part, $ E$ contains an unbounded component $Y$ of  
$ X \setminus K$. But $Y \subseteq G$ for some unbounded component $G$ of $ X \setminus U$. Then $G \subseteq E$. 
\end{proof}

\begin{lemma} \label{LeNoUnbdComp}
Let $X$ be locally compact, connected, locally connected.
Let $A \in \mathscr{A}^{*}(X)$. Then the number of unbounded components of $ X \setminus A$ is finite.
\end{lemma}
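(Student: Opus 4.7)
The plan is to reduce to the compact case and then apply the earlier lemmas in the preliminaries. The key observation is that Lemma \ref{LeUnbddComp} lets us pass from a bounded open set to its closure without losing control of the unbounded components, and Lemma \ref{LeAaCompInU} already gives finiteness when we have a bounded connected open neighborhood of a compact set.

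First I would reduce to the case $A \in \mathscr{K}(X)$. If $A = U \in \mathscr{O}^*(X)$, set $K = \overline{U}$, which is compact. Since $U \subseteq K$ and both belong to $\mathscr{A}^*(X)$, Lemma \ref{LeUnbddComp} shows that each unbounded component $E$ of $X \setminus U$ contains some unbounded component of $X \setminus K$. Because distinct components of $X \setminus U$ are disjoint, the unbounded components of $X \setminus K$ obtained this way are all distinct, so the number of unbounded components of $X \setminus U$ is bounded above by the number of unbounded components of $X \setminus K$. Hence it is enough to prove the statement for a compact $A = K$.

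Next I would produce a bounded connected open neighborhood of $K$. Since $X$ itself is open and connected, Lemma \ref{LeConLC} (Case 2, with $U = X$) yields a set $V \in \mathscr{O}_{c}^{*}(X)$ with $K \subseteq V \subseteq \overline{V} \subseteq X$. Now Lemma \ref{LeAaCompInU} applies to $K \subseteq V$ and tells us that only finitely many connected components of $X \setminus K$ fail to be contained in $V$. Any component of $X \setminus K$ that is contained in $V$ is a subset of the compact set $\overline{V}$ and is therefore bounded. Consequently every unbounded component of $X \setminus K$ must be among the finitely many components not contained in $V$, which finishes the proof.

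I do not expect a real obstacle here: the whole argument is a short combination of Lemma \ref{LeConLC}, Lemma \ref{LeAaCompInU}, and Lemma \ref{LeUnbddComp}. The only mildly subtle point is the reduction from the bounded open case to the compact case, where one must be careful that the map sending an unbounded component of $X \setminus U$ to an unbounded component of $X \setminus \overline{U}$ it contains is injective; this follows immediately from disjointness of the source components.
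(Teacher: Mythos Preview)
Your proof is correct and follows essentially the same approach as the paper: handle the compact case via Lemma~\ref{LeConLC} and Lemma~\ref{LeAaCompInU}, and reduce the bounded open case to the compact case by passing to the closure and invoking Lemma~\ref{LeUnbddComp}. The only cosmetic difference is that the paper treats the compact case first and then the open case, while you present the reduction first.
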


\begin{proof}
Suppose first that $ A \in \mathscr{K}(X)$. By Lemma \ref{LeConLC}
let $U \subseteq \mathscr{O}_{c}^{*}(X)$ be such that $ A \subseteq U $. Then the assertion follows from
Lemma \ref{LeAaCompInU}.
Now suppose that $ A \in \mathscr{O}^{*}(X)$.  Then $\overline A \in \mathscr{K}(X)$, so the number of unbounded components
of $ X \setminus \overline A$ is finite. From Lemma \ref{LeUnbddComp} 
it follows that the number of unbounded components of $X \setminus A$  is also finite.
\end{proof}  

\begin{lemma} \label{LeCleverSet}
Let $X$ be locally compact, connected, locally connected.
Suppose $D \subseteq U$ where $ D \in \mathscr{K}(X), \ U \in \mathscr{O}^{*}(X).$
Let $C$ be the intersection of the union of bounded components of $X \setminus D$ 
with the union of bounded components of $X \setminus U$. Then  $C$ is compact 
and $ U \sqcup C$ is open.
\end{lemma}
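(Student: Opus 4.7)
The plan is to reduce $C$ to a more manageable form. Let $W_D$ denote the union of the bounded components of $X \setminus D$. Since $D$ is compact, $X \setminus D$ is open; by local connectedness (Remark \ref{OpenComp}(c)) every component of $X \setminus D$ is open, and by Corollary \ref{CoBddComp} we have $W_D \in \mathscr{O}^{*}(X)$. Writing $W'_D$ for the union of the \emph{unbounded} components of $X \setminus D$, the set $W'_D$ is likewise open, $W_D \cap W'_D = \emptyset$, and $W_D \cup W'_D = X \setminus D$.

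The first step is to identify $C$ with $W_D \cap (X \setminus U)$. The inclusion $C \subseteq X \setminus U$ is immediate since the bounded components of $X \setminus U$ lie in $X \setminus U$. For the reverse inclusion, take $x \in W_D \cap (X \setminus U)$; because $X \setminus U \subseteq X \setminus D$, the component $A$ of $X \setminus U$ containing $x$ is connected and lies inside $X \setminus D$, so it is contained in the component $B$ of $X \setminus D$ determined by $x$. The hypothesis $x \in W_D$ forces $B$ to be bounded, hence $A$ is bounded and $x$ belongs to a bounded component of $X \setminus U$.

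With this identification, compactness of $C$ will follow from boundedness (since $C \subseteq W_D$) once $C$ is shown to be closed. Let $x \in \overline{C}$; then $x \in \overline{W_D} \cap (X \setminus U)$, so in particular $x \notin U$, and because $D \subseteq U$ also $x \notin D$. If $x$ lay in some unbounded component of $X \setminus D$, that component would be an open neighborhood of $x$ contained in $W'_D$, contradicting $x \in \overline{W_D}$ because $W'_D \cap W_D = \emptyset$. Hence $x \in W_D$, so $x \in C$. Finally, $C \subseteq X \setminus U$ makes the union $U \sqcup C$ genuinely disjoint, and $U \sqcup C = U \cup (W_D \setminus U) = U \cup W_D$ is open as a union of two open sets. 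The only delicate point is the closedness of $C$, where one needs the openness of $W'_D$—ultimately a consequence of local connectedness—to rule out limit points sitting inside an unbounded component.
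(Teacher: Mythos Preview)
Your proof is correct and somewhat more streamlined than the paper's. The key move you make, which the paper does not make explicitly, is the identification $C = W_D \cap (X\setminus U)$: once you observe that every component of $X\setminus U$ meeting $W_D$ must sit inside a bounded component of $X\setminus D$ (since $X\setminus U\subseteq X\setminus D$), the bounded-component condition on the $U$-side becomes redundant. From that point on, $U\sqcup C = U\cup W_D$ is open immediately, and closedness of $C$ follows from the openness of $W'_D$ alone.

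The paper instead keeps $C = V\cap B$ in its original form and argues openness of $U\sqcup C$ via $U\sqcup B = X\setminus F$, which requires Lemma~\ref{LeNoUnbdComp} to conclude that $F$ (the union of unbounded components of $X\setminus U$) is closed. Your route bypasses that lemma entirely: you only need that the unbounded components of $X\setminus D$ are open, which is pure local connectedness. Both arguments eventually reduce to $X\setminus C = U\cup W'_D$, but you get there with fewer ingredients.
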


\begin{proof}
Write 
$ X \setminus D = V \sqcup W,$ 
where $V$ is the union of bounded components of $ X \setminus D$, and $W$ is the union of unbounded components of  $ X \setminus D.$
Also write
$ X \setminus U = B \sqcup F,$ 
where $B$ is the union of bounded components of $ X \setminus U$, and $F$ is the union of unbounded components of $X \setminus U.$
By Lemma \ref{LeNoUnbdComp} $F$ is a closed set. Let 
$ C = V \cap B.$ 
Clearly, $C$ and $U$ are disjoint. 
Note that $U \sqcup B = X \setminus F$ is open, so
$ U \sqcup C = U \sqcup(V  \cap B) = (U \cup V) \cap (U \sqcup B)$
is also open. Now we shall show that  $X \setminus C$ is open.  Note that $ F \subseteq W$ by Remark \ref{ReUnbddComp}.  
The set $W$ is open by Lemma \ref{CmpCmplBdA}. Now 
$
X \setminus C = X \setminus (B \cap V) = (X \setminus B) \cup (X \setminus V)  
= ( U \sqcup F) \cup (D \sqcup W) = (U \cup D) \cup (F \cup W) = U \cup W
$
is an open set.  By Corollary \ref{CoBddComp} the set $C$ is bounded. 
\end{proof}

\begin{remark}
Lemma \ref{LeAaCompInU} is stated without proof in  \cite[ Lemma 3.4]{Aarnes:LC}. 
Lemma \ref{LeNoUnbdComp}  is close to  \cite[Lemma 3.5]{Aarnes:LC}, and Lemma \ref{LeCleverSet} is related to a part in the proof of Lemma 5.9 in \cite{Aarnes:LC}. 
\end{remark}



\section{Solid hulls}  \label{SolidSemisolid}

\begin{definition}
A set $A$ is semisolid if $A$ is connected, and $X \setminus A $ has only finitely many components. 
If $X$ is locally compact, noncompact, a set $A$ is solid if $A$ is  connected, and $X \setminus A$ has only unbounded components.
If $X$ is compact, a set $A$ is solid if $A$ and $X \setminus A$ are connected.
\end{definition}

\begin{example}
If $X= [0,1]^2$ is the unit square, the smaller square $A = [1/4, 3/4]^2$ is solid, and its boundary is not solid, but is semisolid. 
\end{example}

\begin{remark} \label{ReFinNoComp}
Let $X$ be noncompact locally compact, locally connected, connected.
From Lemma \ref{LeNoUnbdComp}
it follows that a bounded set $B$ is semisolid if and only if the number of bounded 
components of $X \setminus B$ is finite. For a  bounded solid set $A$, 
$ X \setminus A = \bigsqcup_{i=1}^n  E_i, $
where $ n \in \mathbb{N}$ and $E_i$'s are unbounded connected components.
\end{remark}

\begin{lemma} \label{SolidCompoLC}
Let $X$ be locally compact, locally connected, connected.
If $A \in \mathscr{A}_{c}^{*}(X)$ then each bounded component of $X \setminus A$ is a solid bounded set.
\end{lemma}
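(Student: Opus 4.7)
The plan is to reduce the statement directly to Lemma \ref{CmpCmplBdA}(iv), which already delivers most of the content, and then handle the compact versus noncompact branch of the definition of ``solid''.

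Let $B$ be a bounded component of $X \setminus A$. By definition of ``component'', $B$ is connected, and by hypothesis $B$ is bounded, so it only remains to check the complement condition in the definition of a solid set. Since $A \in \mathscr{A}_{c}^{*}(X) \subseteq \mathscr{A}_{c}(X)$ and $X$ is connected and locally connected, Lemma \ref{CmpCmplBdA}(iv) applies to $A$ and $B$, and tells us that $B$ is co-connected, i.e.\ $X \setminus B$ is connected.

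In the compact case this already finishes the proof: ``solid'' in a compact space just means that $B$ and $X \setminus B$ are both connected, and we have both. In the noncompact locally compact case, ``solid'' requires that every component of $X \setminus B$ be unbounded. Since $X \setminus B$ is connected it has only one component, namely $X \setminus B$ itself, so the task reduces to showing that $X \setminus B$ is unbounded. If instead $X \setminus B$ were bounded, then both $B$ and $X \setminus B$ would have compact closures, so $X = \overline{B} \cup \overline{X \setminus B}$ would be compact, contradicting the standing assumption in this case that $X$ is noncompact. Hence $X \setminus B$ is unbounded, its only component is unbounded, and $B$ is solid.

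There is no real obstacle here: the essential topological fact (co-connectedness of a component of $X \setminus A$ when $A$ is connected) is already packaged in Lemma \ref{CmpCmplBdA}(iv), and the only non-bookkeeping observation is the short argument that a bounded connected complement in a noncompact connected space is impossible.
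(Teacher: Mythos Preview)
Your proof is correct and follows essentially the same approach as the paper's: both reduce to Lemma \ref{CmpCmplBdA} to get that $X\setminus B$ is connected (the paper invokes part (iii), you invoke the packaged consequence (iv)), and both then observe that in the noncompact case this single component must be unbounded. Your unboundedness argument via $X=\overline{B}\cup\overline{X\setminus B}$ is a clean variant of what the paper leaves implicit.
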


\begin{proof}
Let 
$ X \setminus A = \bigsqcup_{i \in I} B_i \sqcup \bigsqcup_{j \in J} D_j $
be the decomposition  of $X \setminus A$ into components, where $B_i$'s are bounded
components, $D_j$'s are unbounded ones (and  $J = \emptyset$ when $X$ is compact).
Pick a bounded component $B_k$. Then 
$ X \setminus B_k = A  \sqcup \bigsqcup_{i \neq k} B_i  \sqcup \bigsqcup_{j \in J} D_j. $ 
The set on the right hand side is connected by Lemma \ref{CmpCmplBdA};  it is also unbounded if $X$ is noncompact. Hence, $B_k$ is solid.
\end{proof}

A set $A \in \mathscr{A}_{c}^{*}(X)$ may not be solid. But we may make it solid by filling in the "holes" by adding to $A$ all bounded components 
of $X \setminus A$. More precisely, we have
  
\begin{definition} \label{solid hull}
Let $X$ be locally compact, locally connected,  connected.
For $A \in \mathscr{A}_{c}^*(X)$ let  $\{A_i\}_{i=1}^n$be the unbounded components 
of $X \setminus A$ and $\{B_t\}_{t \in T}$ be the bounded components of $X \setminus A$. 
We say that $\widetilde{A}=  A \sqcup \bigsqcup_{t \in T} B_t= X \setminus \bigsqcup_{i=1}^n A_i $ is a solid hull of $A$.
\end{definition}

\begin{remark}  \label{leSolidHu}
The set $\widetilde A $ is connected by Lemma \ref{CmpCmplBdA}.
If $X$ is noncompact, $X \setminus \widetilde A$ has only unbounded components, so $\widetilde A $ is solid. 
If $X$ is compact then  $\widetilde{A}= X$ for any connected closed or connected open set $A$. 
\end{remark}

The next lemma gives some properties of solid hulls. 

\begin{lemma}\label{PrSolidHuLC}
Let $X$ be noncompact locally compact, connected, locally connected.
Let $A, B \in \mathscr{A}_{c}^{*}(X)$.
\begin{enumerate}[label=(a\arabic*),ref=(a\arabic*)]
\item \label{part1}
If $ A \subseteq B$ then $\widetilde{A} \subseteq \widetilde{B}.$
\item \label{part2}
$\widetilde{A}$ is a bounded solid set, $A \subseteq  \widetilde{A}$, and $A$ is solid iff  $A   = \widetilde{A}.$
\item \label{part3}
$\widetilde{\widetilde{A}} = \widetilde{A}.$
\item  \label{part4}
If $A$ is open, then so is $ \widetilde{A}$. If $A$ is compact, then so is $\widetilde{A}.$
\item \label{part5}
If  $A, B$ are disjoint bounded connected sets, then their solid hulls $\widetilde{A}, \widetilde{B}$ are either disjoint or one is properly contained in the other.
\end{enumerate}
\end{lemma}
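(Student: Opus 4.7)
The plan is to prove (a1)--(a4) directly from the preliminaries and then devote the bulk of the work to (a5), which is the main obstacle. The key tools I will invoke are Lemma~\ref{LeUnbddComp} (containment of unbounded components under set inclusion), Lemma~\ref{CmpCmplBdA} (how components of $X\setminus A$ touch $A$ and combine with it into a connected set), Corollary~\ref{CoBddComp} (the union of bounded components of a compact set's complement is bounded open), and Remark~\ref{OpenComp}(a) (joining an open connected set and a closed connected set whose closures meet).

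For (a1), I would write $\widetilde A = X\setminus\bigsqcup_{i=1}^n A_i$ with $A_i$ the unbounded components of $X\setminus A$; Lemma~\ref{LeUnbddComp} places each unbounded component of $X\setminus B$ inside some $A_i$, which gives $X\setminus\widetilde B \subseteq X\setminus\widetilde A$, i.e.\ $\widetilde A\subseteq\widetilde B$. For (a2), $A\subseteq\widetilde A$ is immediate, connectedness of $\widetilde A$ is Lemma~\ref{CmpCmplBdA}(iii), solidity follows because each $A_i$ is still a component of the smaller set $X\setminus\widetilde A$, and the iff reads off the definition. Boundedness I would handle in two steps: when $A$ is compact it is Corollary~\ref{CoBddComp}, and when $A$ is open bounded I apply (a1) to $A\subseteq\overline A$ to get $\widetilde A\subseteq\widetilde{\overline A}$ and invoke the compact case. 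Part (a3) is (a2) applied to $\widetilde A$. For (a4), components of $X\setminus A$ are closed when $A$ is open and, by local connectedness, open when $A$ is compact; since only finitely many unbounded components exist (Lemma~\ref{LeNoUnbdComp}), $\widetilde A$ inherits the corresponding topological type and is compact in the compact case by (a2).

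For (a5), the main content, I would start by using that $A,B$ are disjoint and connected: $A$ lies in a unique component $C_B$ of $X\setminus B$ and $B$ lies in a unique component $C_A$ of $X\setminus A$, leading to three exhaustive subcases. If $C_B$ is bounded, $A\subseteq\widetilde B$, so (a1) and (a3) give $\widetilde A\subseteq\widetilde{\widetilde B}=\widetilde B$; the symmetric subcase gives $\widetilde B\subseteq\widetilde A$. The remaining subcase is that both $C_A$ and $C_B$ are unbounded, and here the goal becomes $\widetilde A\cap\widetilde B=\emptyset$.

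The hard part is this last subcase. Since $A$ sits in an unbounded component of $X\setminus B$, it misses $B$ and every bounded component $B_s$ of $X\setminus B$, so $A\cap\widetilde B=\emptyset$; symmetrically $B\cap\widetilde A=\emptyset$. Hence the only possible overlap comes from $A_t\cap B_s\ne\emptyset$ for bounded components $A_t$ of $X\setminus A$ and $B_s$ of $X\setminus B$. Because $B\subseteq C_A\ne A_t$, the connected set $A_t$ is contained in $X\setminus B$ and meets $B_s$, forcing $A_t\subseteq B_s$; symmetrically $B_s\subseteq A_t$, giving $A_t=B_s=:C$. Now Lemma~\ref{CmpCmplBdA} supplies the needed contact between $A$ and $C$ (either $\overline A\cap C\ne\emptyset$ if $A$ is open, or $A\cap\overline C\ne\emptyset$ if $A$ is compact), and Remark~\ref{OpenComp}(a) promotes $A\sqcup C$ to a connected subset of $X\setminus B$. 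Since $A\sqcup C$ strictly extends the component $C$, maximality is violated, a contradiction. Finally, to upgrade the containments in the first two subcases to strict ones, I would suppose $\widetilde A=\widetilde B$ together with $A\subseteq\widetilde B$: then $B\subseteq\widetilde A$ forces $B$ into a bounded component $A_{t_0}$ of $X\setminus A$, which is solid by Lemma~\ref{SolidCompoLC}; applying (a1) and (a3) gives $\widetilde B\subseteq A_{t_0}$, so $A\subseteq\widetilde A=\widetilde B\subseteq A_{t_0}$, contradicting $A\cap A_{t_0}=\emptyset$.
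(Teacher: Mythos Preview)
Your argument is correct. Parts (a1)--(a4) match the paper almost verbatim (the paper cites Lemma~\ref{LeAaCompInU} directly for boundedness in the compact case, you cite its corollary~\ref{CoBddComp}, which amounts to the same thing).

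Part (a5) is where your route diverges. For the ``disjoint'' subcase (both $C_A$ and $C_B$ unbounded), the paper argues in one line: since $\widetilde B$ is connected and contained in $X\setminus G\subseteq X\setminus A$, it lies in a single component of $X\setminus A$, necessarily the unbounded one $E$ containing $B$, hence $\widetilde B\subseteq E\subseteq X\setminus\widetilde A$. Your approach instead isolates a hypothetical common bounded component $C=A_t=B_s$ and derives a contradiction by gluing $A\sqcup C$ into a connected set that strictly extends the component $C$ of $X\setminus B$; this is correct but longer. For the strict inclusion, the paper splits on the topological types of $A$ and $B$ and produces a nontrivial clopen subset of $X$ in each case, whereas you observe that $\widetilde A=\widetilde B$ would force $A\subseteq\widetilde B\subseteq A_{t_0}$ with $A_{t_0}$ a solid bounded component of $X\setminus A$ disjoint from $A$. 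Your argument here is arguably cleaner, since it avoids the case analysis and the somewhat terse ``it is easy to see that $A=E$'' step in the paper.
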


\begin{proof}
Part \ref{part1} follows since each unbounded component of 
$ X \setminus B$ is contained in an unbounded component of $  X \setminus A$. 
 If $A$ is compact, choose by Lemma \ref{LeConLC} a set $U \in \mathscr{O}_{c}^{*}(X)$ that contains $A$.
Since $\widetilde A$ is a union of $A$ and bounded components of $X \setminus A$, applying 
Lemma \ref{LeAaCompInU} we see that $\widetilde A$ is bounded. The rest of  parts \ref{part2} and \ref{part3} is immediate. 
For part \ref{part4}, note that if $A$ is open (closed) then 
each of finitely many (by Lemma \ref{LeNoUnbdComp}) unbounded components of $X \setminus A$ is closed (open) by Lemma \ref{CmpCmplBdA}.
To prove part \ref{part5}, let $A, B \in \mathscr{A}_{c}^{*}(X)$ be disjoint. 
If $A \subseteq \widetilde B$ then  $\widetilde A \subseteq \widetilde B$ by parts \ref{part1} and \ref{part3}. 
To prove that the inclusion is proper, assume to the contrary that $\widetilde A = \widetilde B$. If one of the sets $A, B$ 
is open and the other is closed, this equality means that $\widetilde A$ is a proper clopen subset of $X$, 
which contradicts the connectedness of $X$. Suppose $A$ and $B$ are both closed (both open).
Then it is easy to see that $A = E$, where $E$ is a  bounded component of $X \setminus B$,
an open (closed) set. Thus, $A$ is a proper clopen subset of $X$, which
contradicts the connectedness of $X$. Therefore, $\widetilde A$ is properly contained in $\widetilde B$. 
Similarly, if $ B \subseteq \widetilde A$ then $ \widetilde B \subseteq \widetilde A $, and the inclusion is proper. 
Suppose neither of the above discussed cases $A \subseteq \widetilde B$ or $B \subseteq \widetilde A$ occurs.  
Then by connectedness we have $ A \subseteq G$, $ B \subseteq E$,
where $G$ is an unbounded component of $ X \setminus B$ and $E$ is an unbounded 
component of $ X \setminus A$. Then $ B \subseteq \widetilde B \subseteq X \setminus G \subseteq X \setminus A$,
i.e. $\widetilde B$ is contained in a component of $ X \setminus A$.
Since $\widetilde B$ is connected and $ B \subseteq E$ we must have $ \widetilde B \subseteq E  \subseteq X \setminus \widetilde A$.
\end{proof}

\begin{remark} \label{ordering}
Suppose disjoint sets $A_1, A_2, \ldots, A_n  \in \mathscr{A}_{c}^{*}(X)$.
 On 
$\{ A_1, A_2, \ldots, A_n \}$ consider a partial order where $A_i \le A_j$ iff  $\widetilde{A_i} \subseteq \widetilde{A_j}$. 
(See Lemma \ref{PrSolidHuLC}.)
Let $ A_1, \ldots, A_p$ where $ p \le n$ be maximal elements in  $\{ A_1, A_2, \ldots A_n \}$
with respect to this partial order. 
Notice that $\widetilde{A_1}, \ldots, \widetilde{A_p}$ are all disjoint by part \ref{part5} of  Lemma \ref{PrSolidHuLC}.
For a maximal element $A_k,  k \in \{ 1, \ldots, p\}$  let
$$ I_k = \{ i \in \{ p+1, \ldots, n\}: \,  A_i \mbox{ is contained in a bounded component of }  X\setminus A_k \}. $$
The sets $I_k, \ k=1, \ldots, p$ are disjoint
(otherwise,  if $ i \in I_k \cap I_m, \  1\le k, m \le p$  then $\widetilde{A_k} \cap \widetilde{A_m} \ne \emptyset$).
Then $ \{1, \ldots, n\} = \{1, \ldots, p\} \sqcup \bigsqcup_{k=1}^p I_k $. 
Indeed, if $i \in \{1, \ldots, n\} \setminus \{ 1, \ldots, p\}$ we must have $ A_i \subseteq \widetilde A_i \subseteq \widetilde A_k$ 
for some maximal element $A_k$ (where $ k \in \{1, \ldots, p\}$), and  since $A_i$ and $A_k$ are 
disjoint, $A_i$ must be contained in a bounded component of  $A_k$, i.e. $ i \in I_k$.
Note that $A_1 \sc \ldots \sc A_n \se \widetilde{A_1}  \sc \ldots \sc \widetilde{A_p}$.  
\end{remark}

\begin{remark}
The closure of a solid set need not be solid. For example, in the infinite strip
$X = \mathbb{R} \times [0,4] $  the open set $ U = ((1,3) \times (0,4)) \cup ((5,7) \times(0,4)) \cup ((2,6) \times (1,3))$
is solid,  while its closure is not. However, we have the following result.
\end{remark}

\begin{lemma} \label{LeCsInside}
Let $X$ be locally compact, connected, locally connected.
\begin{enumerate}
\item
Suppose $X$  is noncompact. If  $K \subseteq U, \ K \in \mathscr{K}(X), \ U \in \mathscr{O}_{s}^{*}(X)$ 
then there exist  $ W \in \mathscr{O}_{s}^{*}(X)$ and $C \in \mathscr{K}_{s}(X)$ such that
$ K \subseteq W \subseteq C \subseteq U.$
\item \label{KWsol}
Suppose $X$  is noncompact.
If $K \subseteq V, \ K \in \mathscr{K}_{s}(X), \ V \in \mathscr{O}(X)$ then there exists $ W \in \mathscr{O}_{s}^{*}(X)$ such that 
$ K \subseteq W \subseteq \overline W \subseteq V.$
\item
Suppose $X$ is compact. If $K \subseteq U$,  $K \in \csx, \ U \in \ox$ or $ K \in \cx, \ U  \in \osx$ 
then there exists $ V  \in \osx$  and $C \in \csx$ such that
$ K \subseteq V \subseteq C \subseteq U.$ 
\end{enumerate}
\end{lemma}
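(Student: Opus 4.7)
My approach is to apply Lemma~\ref{LeConLC} in each part to sandwich $K$ between a bounded open connected set $V_0$ and the outer open set, then refine $V_0$ via the solid-hull construction (Definition~\ref{solid hull}, Lemma~\ref{PrSolidHuLC}) or a related tightening, and verify that the refinement stays inside the outer open set. For \textbf{Part~1}: since $U\in\mathscr{O}_{s}^{*}(X)$ is in particular open connected, Lemma~\ref{LeConLC} (case~2) yields $V_0\in\mathscr{O}_{c}^{*}(X)$ and $C_0=\overline{V_0}\in\mathscr{K}_{c}(X)$ with $K\subseteq V_0\subseteq C_0\subseteq U$. Setting $W=\widetilde{V_0}$ and $C=\widetilde{C_0}$, Lemma~\ref{PrSolidHuLC} (parts~\ref{part1}, \ref{part2}, \ref{part4}) gives $W\in\mathscr{O}_{s}^{*}(X)$, $C\in\mathscr{K}_{s}(X)$, and $W\subseteq C$. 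The containment $C\subseteq U$ follows from the solidity of $U$: every component of $X\setminus U$ is unbounded, and by Lemma~\ref{LeUnbddComp} applied to $C_0\subseteq U$ each such component lies in an unbounded component of $X\setminus C_0$. Hence no bounded component of $X\setminus C_0$ meets $X\setminus U$, so $\widetilde{C_0}\subseteq U$.

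\textbf{Part~2} is the main difficulty. Lemma~\ref{LeConLC} (case~1, since $K$ is connected) gives $V_0\in\mathscr{O}_{c}^{*}(X)$ with $K\subseteq V_0\subseteq\overline{V_0}\subseteq V$, but $\widetilde{V_0}$ may not lie in $V$, because a bounded component of $X\setminus V_0$ can straddle $X\setminus V$. The structural input I exploit is the solidity of $K$: by Remark~\ref{ReFinNoComp}, $X\setminus K=E_1\sqcup\cdots\sqcup E_n$ with each $E_i$ open, connected, and unbounded, so $X\setminus K$ has no bounded components. My plan is to choose $V_0$ ``tightly'' around $K$: by Remark~\ref{OpenComp}(b) cover $K$ by finitely many bounded open connected sets $V_{x_i}$ with $\overline{V_{x_i}}\subseteq V$, assemble their union into a bounded open connected $V_0\subseteq V$ containing $K$ (as in the proof of Lemma~\ref{LeConLC}), and argue that for a sufficiently tight such choice every bounded component of $X\setminus V_0$ is contained in $V$. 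With that in hand, $W=\widetilde{V_0}\in\mathscr{O}_{s}^{*}(X)$ satisfies $K\subseteq W\subseteq\overline{W}\subseteq V$. The key claim — and the main obstacle of the proof — is that the solidity of $K$ forces every bounded complementary component of a tight enough $V_0$ to lie in $V$; concretely, since $X\setminus K$ has no bounded components, the bounded components of $X\setminus V_0$ near $K$ can be controlled by $K$'s local neighborhood structure together with local connectedness.

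\textbf{Part~3} (the compact case) follows the same pattern, recalling that ``solid'' in a compact space means that the set and its complement are both connected. Case~(b), where $U\in\osx$, is the analogue of Part~1: apply Lemma~\ref{LeConLC} (case~2) to get $V_0$; since $X\setminus U$ is connected, it lies in a single component $B_0$ of $X\setminus V_0$, and absorbing all other components of $X\setminus V_0$ into $V_0$ yields $W\in\osx$ with $V_0\subseteq W\subseteq U$ and connected complement $B_0$, after which a parallel step at the closure level produces $C\in\csx$ between $W$ and $U$. Case~(a), where $K\in\csx$ is solid, is the analogue of Part~2: use the connectedness of $X\setminus K$ in place of the $E_i$ decomposition, and apply the same tightness argument to produce $V\in\osx$ and $C\in\csx$ with $K\subseteq V\subseteq C\subseteq U$.
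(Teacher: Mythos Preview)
Your treatment of Part~1 is correct and is exactly the paper's argument: take the $V$ and $\overline V$ from Lemma~\ref{LeConLC}, pass to their solid hulls, and use Lemma~\ref{PrSolidHuLC} together with the fact that all components of $X\setminus U$ are unbounded to keep the hulls inside $U$.

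The gap is in Part~2, and it propagates to your Part~3 case~(a). You correctly identify that $\widetilde{V_0}$ need not lie in $V$, and you propose to fix this by taking $V_0$ ``tight enough''. But you never make ``tight enough'' precise, and you never prove that some choice of $V_0$ has all its bounded complementary components inside $V$. The sentence ``the bounded components of $X\setminus V_0$ near $K$ can be controlled by $K$'s local neighborhood structure together with local connectedness'' is a hope, not an argument: a bounded component $B$ of $X\setminus V_0$ need only have $\overline B$ touching $\partial V_0$; nothing prevents $B$ from extending far away from $K$ inside an unbounded component $E_j$ of $X\setminus K$ and leaving $V$. Shrinking $V_0$ does not obviously help, since shrinking can create \emph{new} bounded pockets in $X\setminus V_0$. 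You have located the obstacle but not removed it.

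The paper's Part~2 takes a genuinely different, constructive route that avoids any tightness argument. After choosing $U\in\mathscr{O}_c^*(X)$ with $K\subseteq U\subseteq\overline U\subseteq V$, it works inside each unbounded component $V_j$ of $X\setminus K$: it lets $E_j$ be the union of bounded components of $X\setminus U$ lying in $V_j$ and $F_j$ the union of unbounded ones, shows $\overline{E_j}\subseteq V_j$, and then uses Lemma~\ref{LeConLC} to find a compact connected $D_j\subseteq V_j$ containing $(F_j\cap\overline U)\cup\overline{E_j}$. The set $B_j=D_j\cup F_j$ is closed, connected, unbounded, disjoint from $K$, and $B=\bigcup_j B_j$ contains $X\setminus U$. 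Then the component $O$ of $X\setminus B$ containing $K$ has $\widetilde O\subseteq X\setminus B\subseteq U$, so $W=\widetilde O$ works. The point is that the paper \emph{builds} an explicit barrier $B$ separating $K$ from $X\setminus U$, rather than hoping that a small $V_0$ will have well-behaved complementary components.

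For Part~3, the paper proves case~(a) ($K\in\csx$) directly by complementation: apply Lemma~\ref{LeConLC} to $X\setminus U\subseteq X\setminus K\in\osx$ to get $W\in\ocx$ with $X\setminus U\subseteq W\subseteq\overline W\subseteq X\setminus K$, then take $V$ to be the (solid, by Lemma~\ref{CmpCmplBdA}) component of $X\setminus\overline W$ containing $K$, and iterate once more to produce $C$. This is closer in spirit to your Part~3(b) sketch than to your Part~3(a), and it does not rely on any tightness claim.
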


\begin{proof}
1.)
One may take $W$ to be the solid hull of the set $V$  and $C$ to be the solid hull of the set $\overline V$, 
where $V$ is from Lemma \ref{LeConLC}. Then $K \se W \se C \subseteq U$  by Lemma \ref{PrSolidHuLC}.  \\
2.) 
By Lemma \ref{LeConLC} we may choose $ U  \in \mathscr{O}_{c}^{*}(X)$ such that
\begin{align} \label{V}
K \subseteq U \subseteq \overline U \subseteq V.
\end{align}
Since $ K \in \mathscr{K}_{s}(X)$ let
$ X \setminus K = \bigsqcup_{j=1}^n V_j$
be the decomposition into connected components.  Each $V_j $ is an unbounded 
open connected set. Since $X\setminus U \subseteq X \setminus K$, for $j=1, \ldots, n$ let 
$E_j$ be the union of all  bounded components of $X \setminus U$ contained in $V_j$, and 
let $F_j$ be the union of (finitely many by Lemma \ref{LeNoUnbdComp}) 
unbounded components of $X \setminus U$ contained in $V_j$.
By Lemma \ref{CmpCmplBdA} each $F_j$ is closed.
By Lemma \ref{LeUnbddComp} each $F_j$ is nonempty. 
Then by Lemma  \ref{CmpCmplBdA}  nonempty set $F_j \cap \overline U \subseteq V_j$.
Now, $E_j \subseteq \widetilde U$, so $ E_j $ is bounded.
Note that $X = K \sqcup \bigsqcup_{j=1}^n V_j$, and a limit point $x$
of $E_j$ can not be in $V_i$ for $i \neq j$; and it can not be in $K$, since in this case a neighborhood $U$ of $x$ contains no points of 
$E_j$. Thus, $\overline{ E_j} \subseteq V_j$. Then $(F_j \cap \overline U) \cup \overline{ E_j}$ is a compact set contained in $V_j$. 
By Lemma \ref{LeConLC}  there exists $ D_j \in \mathscr{K}_{c}(X)$ such that 
\begin{eqnarray} \label{3sh}
(F_j \cap \overline U) \cup \overline{ E_j} \subseteq D_j \subseteq V_j. 
\end{eqnarray} 
Let 
$ B_j = D_j \cup F_j.$
Then $B_j$ is connected because from (\ref{3sh}) one sees that $D_j $ intersects every component comprising $F_j$. 
Thus,  each $B_j$ is an unbounded closed connected set, $B_j \cap K =\emptyset$. Set
 $B= \bigcup_{j=1}^n B_j.$
Then $ X \sm U \se B$ and $B \cap K = \emptyset$. 
Since $ K \subseteq X \setminus B$, let $O$ be the connected component  of $X \setminus B$ such that 
$ K \subseteq O \subseteq X \setminus B$.
Since $B= \bigcup_{j=1}^n B_j \subseteq X \setminus O$, $B$ is contained in the union of unbounded components of $X \setminus O$. 
Hence, each bounded component of $ X \setminus O$ is disjoint from $B$, and so $\widetilde O \subseteq X \setminus B$. Thus
 $ K \subseteq O \subseteq \widetilde O \subseteq  X \setminus B \subseteq U.$
By (\ref{V}) we see that 
$ K \subseteq \widetilde O \subseteq U \subseteq \overline U \subseteq V,$
and we may take $W =\widetilde O$.  \\
3.) 
We will prove the statement for the first case  $K \in \csx, \ U \in \ox$,
and the second case will follow immediately by considering complements of sets.
$K$ is connected, so by taking a component of $U$ containing $K$ we  may assume that
$U \in \ocx.$ We have $X \setminus U \subseteq X \setminus K $, where 
$ X \setminus K  \in \osx$.
By Lemma \ref{LeConLC}  there exists $W  \in \ocx$ such that 
$X \setminus U  \subseteq W \subseteq \overline W \subseteq X \setminus K,$
so 
$ K \se X \sm \cl W \se X \sm W \se U.$
Since $ \ol W $ is connected, by 
Lemma \ref{CmpCmplBdA} the components of $X \sm \ol W$ are solid. 
Let $ V \in \osx$ be the component of  $X \sm \cl W$ that contains $K$.
Note that $ V \se  X \sm \cl W \se X \sm W \in \cx$, so $ \cl V \se X \sm W$. Then 
$ K \se V \se \cl V \se X \sm W \se U.$
Similarly we can get $ W_1 \in \osx$ such that 
$ K \se W_1 \se \cl W_1 \se V. $
Then  $ X \sm V \se X \sm \cl W_1 , \ X \sm V \in \csx, \ X \sm \ol {W_1} \in \ox$.
As above, we find $E  \in \osx$ such that  
$ X \sm V \se E \se X \sm \ol{W_1}.$
Then 
$K \se W_1 \se \ol{W_1} \se X \sm E  \se V \se \ol V  \se U.$
Since $W_1 \in \osx$ and $ X \sm E \in \csx$, this finishes the proof.
\end{proof}

In the spirit of Lemma \ref{LeCsInside} and Lemma \ref{LeConLC} we have
 
\begin{lemma} \label{ossreg}
Let $X$ be locally compact, connected, locally connected. Suppose $K \subseteq W,$ $ K \in \mathscr{K}(X),$ $W \in \mathscr{O}_{ss}(X)$. 
Then there exist $V \in \mathscr{O}_{ss}^{*}(X)$ and $ D \in \mathscr{K}_{ss}(X)$ such that 
$ K \subseteq V \subseteq D \subseteq W.$
\end{lemma}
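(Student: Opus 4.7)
The plan is to exploit the semisolidity of $W$, namely that $X \setminus W = F_1 \sqcup \cdots \sqcup F_n$ with each $F_i$ connected and closed, and to build both $V$ and $D$ from a single nested pair $K \subseteq V_0 \subseteq \overline{V_0} \subseteq W$ (with $V_0 \in \mathscr{O}_c^*(X)$, obtained from Lemma \ref{LeConLC}) by filling in all bounded complementary components that happen to lie in $W$. Concretely, I would set
\[
V := V_0 \cup \bigcup\{B : B \text{ is a bounded component of } X \setminus V_0, \; B \subseteq W\},
\]
and define $D$ by the same formula with $\overline{V_0}$ in place of $V_0$.

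To show $V \in \mathscr{O}_{ss}^*(X)$ and $D \in \mathscr{K}_{ss}(X)$, the decisive observation is that each connected set $F_i$ lies in a unique component of $X \setminus V_0$ (respectively $X \setminus \overline{V_0}$), so the components failing to be contained in $W$ are at most $n$ in number. Together with the finitely many unbounded components supplied by Lemma \ref{LeNoUnbdComp}, this means $X \setminus V$ and $X \setminus D$ are each finite unions of components of the corresponding complement. Lemma \ref{CmpCmplBdA}(i) renders the components of $X \setminus V_0$ closed in $X$, so $X \setminus V$ is closed and $V$ is open; Lemma \ref{CmpCmplBdA}(ii) renders the components of $X \setminus \overline{V_0}$ open in $X$, so $X \setminus D$ is open and $D$ is closed (hence compact, since $D$ sits inside the bounded solid hull of $\overline{V_0}$ by Lemma \ref{PrSolidHuLC}). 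Connectedness of both sets follows from Lemma \ref{CmpCmplBdA}(iii), while the finite count of complementary components delivers the semisolidity.

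Finally, to verify $V \subseteq D$ I note that $V_0 \subseteq \overline{V_0} \subseteq D$ is immediate, so I only have to handle an added bounded component $B$ of $X \setminus V_0$ with $B \subseteq W$. The piece $B \cap \overline{V_0}$ lies in $D$ automatically. For the piece $B \setminus \overline{V_0}$, any component $C$ of $X \setminus \overline{V_0}$ that meets $B$ satisfies $C \cup B \subseteq X \setminus V_0$ and is connected, so maximality of $B$ as a component of $X \setminus V_0$ forces $C \subseteq B$; hence $C$ is bounded and contained in $W$, and so $C \subseteq D$ by construction. The main technical obstacle I foresee is the careful bookkeeping required to keep track of two different "base" sets $V_0$ and $\overline{V_0}$ at the same time: matching the open/closed duality in Lemma \ref{CmpCmplBdA} to the correct side, counting excluded components via the $F_i$'s in both constructions, and showing that the component of $X \setminus \overline{V_0}$-viewpoint is compatible with the $X \setminus V_0$-viewpoint needed for $V \subseteq D$.
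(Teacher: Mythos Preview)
Your proposal is correct and is essentially the paper's own argument: both start from $K \subseteq V_0 \subseteq \overline{V_0} \subseteq W$ via Lemma~\ref{LeConLC} and then form $V$ and $D$ as the respective solid hulls minus the finitely many bounded complementary components that meet some $F_i$ (equivalently, that are not contained in $W$). The only cosmetic difference is that the paper routes the definition of the excluded components of $X\setminus V_0$ through those of $X\setminus \overline{V_0}$ (its index sets $T'$ and $S'$), which makes $V\subseteq D$ immediate, whereas you define both directly and then supply the short component-chasing argument for $V\subseteq D$; the underlying construction is the same.
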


\begin{proof}
Suppose first that $X$ is non-compact.
By Lemma \ref{LeConLC} choose $U \in \mathscr{O}_{c}^{*}(X)$ and $ C \in \mathscr{K}_{c}(X)$ such that 
$ K \subseteq U \subseteq C \subseteq W.$
Let $X \setminus W = \bigsqcup_{i=1}^n E_i, \ X \setminus C = \bigsqcup_{t \in T} V_t, \ X \setminus U = \bigsqcup_{s \in S} D_s$ 
be decompositions into connected components of $X \setminus W, \ X \setminus C,  \ X \setminus U$ respectively. Then 
$  \bigsqcup_{i=1}^n E_i \subseteq  \bigsqcup_{t \in T} V_t \subseteq \bigsqcup_{s \in S} D_s.$
Let  $T_0 = \{ t \in T: \ V_t \mbox{   is unbounded} \}$. 
Let us index by $T'$ the family of 
all bounded components of $X \setminus C$ each of which contains a component of $X \setminus W$. 
So $T'$ is  finite, and  $ \bigsqcup_{i=1}^n E_i \subseteq  \bigsqcup_{t \in T_0}  V_t \sqcup   \bigsqcup_{t \in T'} V_t$. 
Now let us index by $S'$ the family of all bounded components of $X \setminus U$ each of which contains a component $V_t$ for some $t \in T' $. 
Note that $S'$ is a finite index set and 
$\bigsqcup_{t \in T'} V_t \subseteq \bigsqcup_{s \in S'} D_s.$
Consider 
$ V = \widetilde U \setminus \bigsqcup_{s \in S'} D_s.$ 
Then $V$ is bounded. Also, $V$ is open. By Lemma \ref{CmpCmplBdA} $V$ is connected. Since 
$ X \setminus V = (X \setminus \widetilde U) \sqcup\bigsqcup_{s \in S'} D_s \subseteq \bigsqcup_{s \in S} D_s  = X \setminus U$
we see that $V \in \mathscr{O}_{ss}^{*}(X)$ (as the first equality indicates that $X \setminus V$ has finitely 
many components), and that $U \subseteq V$.
Now consider 
$D = \widetilde C \setminus \bigsqcup_{t \in T' } V_t.$
Then $D$ is compact. By Lemma \ref{CmpCmplBdA} $D$ is connected. We have 
$ X \setminus D 
= (X \setminus \widetilde C) \sqcup \bigsqcup_{t \in T'} V_t  \subseteq  (X \setminus \widetilde U) \sqcup \bigsqcup_{s \in S'} D_s = X \setminus V,$
so $X \setminus D$ has finitely many components, and $V \subseteq D$. Thus, $D \in \mathscr{K}_{ss}(X)$. Also, 
$ X \setminus W = \bigsqcup_{i=1}^n E_i \subseteq 
\bigsqcup_{t \in T_0} V_t \sqcup \bigsqcup_{t \in T'} V_t  = (X \setminus \widetilde C) \sqcup \bigsqcup_{t \in T'} V_t = X \setminus D.$
Therefore, $ D \subseteq W$. Then we have:
$ K \subseteq U \subseteq V \subseteq D \subseteq W,$
where $ V \in \mathscr{O}_{ss}^{*}(X)$ and $ D \in \mathscr{K}_{ss}(X)$.

If $X$ is compact use $\widetilde U = \widetilde C = X, \,T_0 = \emptyset$ and the same (but simplified) argument. 
\end{proof}

Let $V$ be an open subset of $X$ endowed with the subspace topology.  Let $D \subseteq V$.
By $\overline D^V$ we denote the closure of  $D$ in $V$ with the subspace topology. As before, 
$\overline D$ stands for the closure of $D$ in $X$.  

\begin{lemma} \label{76a}
Let $V \in \mathscr{O}(X), \ D \subseteq V$. Suppose $V$ is endowed with the subspace topology.
\begin{itemize}
\item[a)]
If $D$ is bounded in $V$ with the subspace topology then 
$\overline D^V = \overline D$ and $\overline D \subseteq V$. 
\item[b)]
If $D$ is bounded in $X$ and $\overline D \subseteq V$ then $D$ is bounded in $V$.
\end{itemize}
\end{lemma}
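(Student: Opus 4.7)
The plan is to unpack the definition of ``bounded'' as ``having compact closure'' (in the appropriate ambient space) and then exploit two standard facts from general topology: (i) compactness is intrinsic, so a subset of $V$ which is compact in the subspace topology is automatically compact in $X$, and in a Hausdorff space it is therefore closed in $X$; (ii) for any subspace $V \subseteq X$ and $D \subseteq V$ one has $\overline D^V = \overline D \cap V$.

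For part (a), I would assume $\overline D^V$ is compact. By (i), $\overline D^V$ is then closed in $X$. Since $D \subseteq \overline D^V$ and the latter is an $X$-closed set, we get $\overline D \subseteq \overline D^V$. The reverse inclusion $\overline D^V \subseteq \overline D$ is immediate from (ii) since $\overline D \cap V \subseteq \overline D$. Hence $\overline D^V = \overline D$, and in particular $\overline D = \overline D^V \subseteq V$.

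For part (b), I would assume $\overline D$ is compact (in $X$) and $\overline D \subseteq V$. By (ii), $\overline D^V = \overline D \cap V = \overline D$, which is compact, so $D$ is bounded in $V$.

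There is no real obstacle here; the statement is essentially a dictionary check between the two notions of ``bounded'' when the relevant closures happen to sit inside $V$. The only point worth being explicit about is that the Hausdorff assumption on $X$ (standing hypothesis in the paper) is what lets us pass from ``compact subspace'' to ``closed in $X$'' in part (a), and that fact together with the elementary identity $\overline D^V = \overline D \cap V$ is the entire content of the lemma.
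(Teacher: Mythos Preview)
Your proof is correct and follows essentially the same approach as the paper: both arguments hinge on the compactness of $\overline D^V$ forcing it to be closed in the Hausdorff space $X$, which immediately yields $\overline D = \overline D^V$. If anything, your version is a bit more explicit, since you invoke the identity $\overline D^V = \overline D \cap V$ and state the role of the Hausdorff hypothesis outright, whereas the paper leaves those steps implicit.
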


\begin{proof}
(a)
Clearly, $\overline D^V \subseteq \overline D$, $\overline{ \overline D^V} \se \overline D$, and $ D \subseteq \overline D^V$,  so we have
$ \overline D \subseteq  \overline{ \overline D^V} \subseteq \overline D.$
Thus, $ \overline D =  \overline D^V  \subseteq V$. 
(b)
Let $ \overline D \se V$. Again, $ \overline D =  \overline D^V $. Since $\overline D$ is compact in $X$, $\overline D^V$ is compact in $V$.
\end{proof}

\begin{remark} \label{bddInV}
Let  $V  \in \ox$ be endowed with the subspace topology.  \\
(1) From Lemma \ref{76a} we see that $D$ is bounded in $V$ iff $\overline D \subseteq V$. 
Hence, $D$ is unbounded in $V$ iff $\overline D \cap (X \setminus V) \neq \emptyset$.
If $X$ is compact and $V=X$ this criteria shows (as expected) that there are no unbounded components in $X$.  \\
(2) If $E$ is connected in $V$ endowed with the subspace topology then $E$ is connected in $X$. 
\end{remark}

The next two results give relations between being a solid set in a subspace of $X$ and being a solid set in $X$. 

\begin{lemma} \label{LeSolidInV}
Let $X$ be  locally compact, locally connected, connected.  Let $C \subseteq V, \ C \in \mathscr{C}_{s}(X), \ V \in \mathscr{O}(X)$. 
Suppose $V \sm C = \bsc_{t \in T} V_t$ is the decomposition into disjoint open connected sets.
Then $\cl V_t \cap (X \sm V) \neq \O$ for each $t$, and  $C \in \mathscr{C}_{s}(V)$.
\end{lemma}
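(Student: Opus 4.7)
My strategy is to first pin down where $V_t$ sits inside the components of $X\setminus C$, then use a standard clopen argument together with the solidity of $C$ to obtain the first assertion, and finally translate it into solidity of $C$ in $V$ via Lemma \ref{76a}.

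For the setup, since $X\setminus C$ is open in the locally connected space $X$, each of its components $W_{\alpha}$ is open, so $V\setminus C=\bigsqcup_{\alpha}(V\cap W_{\alpha})$ is a decomposition into disjoint open sets. Consequently each component $V_t$ of $V\setminus C$ lies in a unique $W=W_{\alpha}$, where it is in turn a component of the open set $V\cap W\subseteq W$. Local connectedness of $W$ (open in $X$) makes $V_t$ clopen in $V\cap W$, and since $V\cap W$ is open in $W$, the standard subspace identity gives
\begin{equation*}
\overline{V_t}^{W}\cap(V\cap W)=\overline{V_t}^{V\cap W}=V_t,
\end{equation*}
whence $\overline{V_t}^{W}\setminus V_t\subseteq W\setminus V$.

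For the first assertion I would argue by contradiction. If $\overline{V_t}\cap(X\setminus V)=\emptyset$, then $\overline{V_t}\subseteq V$; together with $\overline{V_t}^{W}=\overline{V_t}\cap W$ (because $W$ is open in $X$) this gives $\overline{V_t}^{W}\subseteq V\cap W$, so the previous inclusion forces $\overline{V_t}^{W}=V_t$, i.e.\ $V_t$ is closed in $W$. Being also open and nonempty in the connected set $W$, this forces $V_t=W$. Solidity of $C$ now furnishes a contradiction: in the noncompact case $W$ is unbounded in $X$ while $V_t\subseteq V$ is bounded; in the compact case $W=X\setminus C$, so $V\supseteq V_t\sqcup C=X$ forces $V=X$, which is excluded.

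For the second assertion I observe that $V$ is locally compact and locally connected as an open subset of $X$, that $C\subseteq V$ is closed in $V$ because $C$ is closed in $X$, and that $C$ is connected by hypothesis. What remains for $C\in\mathscr{C}_s(V)$ is that every component of $V\setminus C$ be unbounded in $V$. By Lemma \ref{76a}, since $V_t$ is bounded in $X$, it is bounded in $V$ iff $\overline{V_t}\subseteq V$, which the first assertion precisely rules out; hence each $V_t$ is unbounded in $V$, and $C\in\mathscr{C}_s(V)$.

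The main technical hurdle I foresee is justifying cleanly the subspace identity $\overline{V_t}^{W}\cap(V\cap W)=V_t$, which rests on $V_t$ being a component (hence clopen) in the locally connected open subspace $V\cap W$ of $W$; once this is secured, the clopen-in-$W$ argument and the appeal to solidity of $C$ proceed routinely.
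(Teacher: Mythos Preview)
Your argument tracks the paper's closely: both show that if $\overline{V_t}\cap(X\setminus V)=\emptyset$ then $V_t$ must coincide with a full component $W$ of $X\setminus C$, and then invoke solidity of $C$ for a contradiction. Your clopen-in-$W$ formulation (via $V_t$ being a component of the open set $V\cap W$ in the locally connected space $W$) is a tidy repackaging of the paper's direct computation $U\cap\overline{V_r}=V_r$, but the substance is the same.

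There is, however, a genuine gap in the noncompact case---and the paper's proof shares it. You assert ``$V_t\subseteq V$ is bounded'' and later ``since $V_t$ is bounded in $X$,'' but the hypothesis is only $V\in\mathscr{O}(X)$, with no boundedness assumed. The first assertion in fact fails without that extra hypothesis: take $X=\mathbb{R}$, $C=[0,1]\in\mathscr{K}_s(X)$, $V=(-1,\infty)$; the component $V_t=(1,\infty)$ of $V\setminus C$ satisfies $\overline{V_t}\cap(X\setminus V)=\emptyset$. The paper makes the identical unsupported claim (``since $V_r$ is bounded''). Both arguments become correct once $V\in\mathscr{O}^*(X)$, which is the only situation in which the lemma is invoked later. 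Your derivation of the second assertion also rests on ``$V_t$ bounded in $X$'' together with Lemma~\ref{76a}, so it inherits the gap; a clean repair is to observe that any $V_t$ unbounded in $X$ is automatically unbounded in the subspace $V$, and to run your clopen argument only for the bounded $V_t$. Finally, you dismiss the compact case $V=X$ as ``excluded'' without argument; the paper handles it separately with a one-line remark.
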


\begin{proof}
If $X$ is compact and $V=X$ then $V \sm C$ has only one component $U = X \sm C$, and $\cl U \cap C \neq \O$ 
because otherwise $X$ is disconnected. 
We assume now that $X$ is noncompact or  $X$ is compact and $V \ne X$.
Write
$ X \setminus C = (X \setminus V)  \sqcup \bigsqcup_{t \in T} V_t.   $
Assume to the contrary that there exists $r \in T$ such that  $\overline{ V_r} \cap (X \setminus V) =\emptyset$.
By Remark \ref{OpenComp} $\overline{V_r} \cap V_t = \emptyset$ for each $t \ne r$.
Thus, $ \overline{ V_r} \subseteq C \sqcup V_r$.
Since $V_r \subseteq X \setminus C$ and $V_r$ is connected in $X$, assume  that $V_r$ is contained in a component $U$ of $X \setminus C$.
(If $X$ is compact then $U= X \sm C$).
Then $ V_r \subseteq U \cap \overline{V_r}  \subseteq U \cap (C \sqcup V_r) = V_r$, so $ U \cap \overline{V_r} = V_r$.
Thus, $U =(U \cap \overline{V_r}) \sqcup (U \setminus \overline V_r) =  V_r \sqcup (U \setminus \overline V_r)$ 
is the disconnection of $U$, unless $U = V_r$.
Then $U=V_r$ is a component of $X \setminus C$. If $X$ is noncompact this is impossible, since $V_r$ is bounded and $C$ is solid; 
if $X$ is compact this is impossible since $V_r = U =X \sm C$ implies $X \sm V= \O$.
Thus,  $\cl V_t \cap (X \sm V) \neq \O$ for each $t$. 
If we take $V \sm C = \bsc_{t \in T} V_t$ to be the decomposition into connected components in $V$ endowed with the subspace topology, then
by Remark \ref{bddInV} each $V_t$ is unbounded in $V$, i.e. $C \in \mathscr{C}_{s}(V)$. 
\end{proof}

\begin{lemma} \label{solidV2X}
Suppose $X$ is  locally compact, locally connected, connected.
Let $A \subseteq V, \ V \in \mathscr{O}_{s}^{*}(X)$. If $A \in \mathscr{A}_{s}(V) $ then $A \in \mathscr{A}_{s}^{*}(X)$.
\end{lemma}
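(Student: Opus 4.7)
The plan is a case analysis on whether $A$ is open or closed in $V$, followed by a maximality argument that rules out bounded components of $X \setminus A$. First I would dispatch the routine ingredients: $A \subseteq V$ and $\overline{V}$ compact give $\overline{A}$ compact, hence $A$ is bounded in $X$; connectedness of $A$ in $V$ passes to connectedness in $X$ via Remark \ref{bddInV}(2). For openness or closedness in $X$: if $A \in \mathscr{O}_s(V)$ then $A$ is open in $X$ because $V$ is open in $X$. If $A \in \mathscr{C}_s(V)$, then invoking the boundedness of $A$ in $V$ supplied by the intended setting, Lemma \ref{76a}(a) identifies $\overline{A}^V$ with $\overline{A}^X$, and since $A = \overline{A}^V$ (as $A$ is closed in $V$), we conclude $A = \overline{A}^X$, so $A$ is closed in $X$.

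The heart of the matter is to show every component of $X \setminus A$ is unbounded in $X$. Assume for contradiction that $C$ is a bounded component of $X \setminus A$. Since $V \in \mathscr{O}_s^*(X)$, every component of $X \setminus V$ is unbounded in $X$ and is contained in $X \setminus A$. If $C$ met $X \setminus V$, the unbounded component $E$ of $X \setminus V$ through the intersection point would be a connected subset of $X \setminus A$ meeting $C$, hence $E \subseteq C$ by maximality of $C$; but this makes $C$ unbounded, a contradiction. So $C \subseteq V \setminus A$, and an analogous maximality argument together with the connectedness of $C$ in $V \setminus A$ identifies $C$ with a single connected component of $V \setminus A$ in the subspace $V$.

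By the solidity of $A$ in $V$, this component $C$ is unbounded in $V$, so $\overline{C}^V = V \cap \overline{C}$ is not compact. However, $\overline{C}$ is compact in $X$ because $C$ is bounded in $X$; therefore $\overline{C} \cap (X \setminus V) \neq \emptyset$. Pick $p$ in this intersection; since $A \subseteq V$ we have $p \in X \setminus A$. Because every connected component of a topological space is closed in that space, $C$ is closed in the subspace $X \setminus A$, i.e., $\overline{C} \cap (X \setminus A) = C$, which forces $p \in C$, contradicting $C \subseteq V$ and $p \in X \setminus V$. Hence no bounded component of $X \setminus A$ can exist, and $A$ is solid in $X$.

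The main obstacle, as is typical in arguments shuttling between a space and a subspace, is keeping the two topologies straight: turning the $V$-statement that a component of $V \setminus A$ is unbounded in $V$ into the $X$-statement that its closure meets $X \setminus V$, and then chasing that boundary point back into $C$ itself via the fact that $C$ is closed in the subspace $X \setminus A$. The closed case additionally rides on Lemma \ref{76a} to identify the two closures, which is exactly the mechanism that promotes $A$ from closed in $V$ to closed in $X$.
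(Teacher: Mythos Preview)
Your argument is correct for the noncompact case and takes a genuinely different route from the paper. The paper works constructively: it decomposes $X \setminus V$ into its unbounded components $F_i$ and $V \setminus A$ into its $V$-components $E_t$, observes via Remark~\ref{bddInV} that each $\overline{E_t}$ meets some $F_i$, and then exhibits $X \setminus A$ explicitly as a disjoint union of connected unbounded sets $F_i \cup \bigcup_{t \in T_i} E_t$ (together with any leftover $F_i$). You argue by contradiction: a bounded component $C$ of $X \setminus A$ cannot meet $X \setminus V$ (else it absorbs an unbounded component of $X \setminus V$), so $C$ coincides with a single $V$-component of $V \setminus A$ and is therefore unbounded in $V$; the fact that components are closed in their ambient space then forces a point of $\overline{C} \cap (X \setminus V)$ back into $C \subseteq V$, a contradiction. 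Your version is cleaner and avoids the indexing; the paper's buys an explicit description of the components of $X \setminus A$. You are also more careful than the paper in checking that $A$ is actually open or closed in $X$---the paper's proof never addresses this.

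One small gap: your target (``every component of $X \setminus A$ is unbounded in $X$'') and your first step (``every component of $X \setminus V$ is unbounded'') both invoke the noncompact definition of solidity. When $X$ is compact the conclusion you need is that $X \setminus A$ is \emph{connected}, and $X \setminus V$ is a single bounded connected set, so the first branch of your dichotomy no longer yields a contradiction. The paper handles this separately with a one-line variant. Your argument adapts immediately: for $X$ compact and $V \neq X$, the connected set $X \setminus V$ lies in exactly one component of $X \setminus A$, and your contradiction from ``$C \subseteq V \setminus A$'' onward disposes of any other component, forcing $X \setminus A$ to be connected.
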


\begin{proof}
First suppose $X$ is noncompact.
If $ A \in \mathscr{A}_{s}(V)$ then $A$ is connected in $X$ and bounded in $X$. Since $V \in \mathscr{O}_{s}^{*}(X)$, 
write $X \setminus V = \bigsqcup_{i \in I} F_i$ where $F_i$ are unbounded connected components. 
Let $V \setminus A = \bigsqcup_{t \in T} E_t$ be the decomposition into connected components in $V$.
Each $E_t$ is unbounded in $V$, i.e. $\overline{ E_t} \cap (X \setminus V) \ne \emptyset$, hence, 
$\overline{ E_t} \cap F_i \ne \emptyset$ for some $ i \in I$. 
Let $I' = \{ i \in I: \ F_i \cap \overline{ E_t} \ne \emptyset 
\mbox{  for some   } E_t \}$, and for $i \in I'$ let $T_i = \{ t \in T : \ \overline{ E_t} \cap F_i \ne \emptyset \}$. 
For $i \in I'$ the set $ F_i \cup \bigsqcup_{t \in T_i} E_t$ is unbounded and connected.
Since 
$ X \setminus A = (X \setminus V) \sqcup (V \setminus A) = 
\bigsqcup_{i \in I'} (F_i \cup \bigsqcup_{t \in T_i} E_t)  \sqcup \bigsqcup_{i \in I \setminus I'} F_i $
is a  disjoint union of unbounded connected sets, the statement follows.

For $X$ compact we use a simplified version of the same argument: write $X \setminus V = F$ where $F$ is a closed connected set, and note that 
$ X \setminus A =F \sqcup \bigsqcup_{t \in T} E_t$  is connected.
\end{proof}

\begin{example} 
Let $X = \{ z \in \mathbb{C}: 1 \le |z| \le 4 \}, \, V=  \{ z \in \mathbb{C}: 1 \le |z| < 3 \}, \, 
B = \{ z \in \mathbb{C}: 1 \le |z| \le 2 \}, \, W=  \{ z \in \mathbb{C}: 2<  |z| < 3 \}$. 
Then  $V \in \mathscr{O}_{s}^{*}(X), \ B \in \mathscr{C}_{s}(V) $ and $B \in \mathscr{K}_{s}(X) $, 
while $W \notin  \mathscr{A}_{s}(V)$ and is not solid in $X$. 
(Note that $W   \in \mathscr{O}_{ss}^{*}(X)$, which will follow from Lemma \ref{DecUssN} below.) 
\end{example}

\section{Structure of solid and semisolid sets} \label{Sstructure}

Now we shall take a closer look at the structure of open solid or semisolid sets that contain
closed solid or closed connected sets.

\begin{lemma} \label{LeDecompV}
Let $X$ be locally compact, connected, locally connected.
Let $ C \subseteq V,  \ C \in \mathscr{K}_{s}(X)$. 
\begin{enumerate}[label=(\roman*),ref=(\roman*)]
\item
Suppose  $V \in \mathscr{O}_{s}^{*}(X)$. If $V \setminus C$ is connected then 
$V = C \sqcup W$   where   $ W \in \mathscr{O}_{ss}^{*}(X).$
If  $ V \setminus C$ is disconnected then 
$ V = C \sqcup \bigsqcup_{i=1}^n V_i$     where   $ V_i \in \mathscr{O}_{s}^{*}(X), \ i=1, \ldots, n, \ n \in \N. $
\item
Suppose $ V \in \mathscr{O}_{ss}^{*}(X)$.  Then
$ V = C \sqcup \bigsqcup_{i=1}^n V_i$    where     $ V_i \in \mathscr{O}_{ss}^{*}(X), \ i=1, \ldots, n, \ n \in \N.  $
\end{enumerate}
\end{lemma}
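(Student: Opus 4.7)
The plan is to decompose $V \setminus C$ into connected components and analyze each separately. Since $X$ is locally connected, each component $V_i$ of the open set $V \setminus C$ is open in $X$; it is bounded as a subset of $V$, and by Lemma~\ref{CmpCmplBdA} applied to $C$ viewed as a closed connected subset of the connected, locally connected subspace $V$, each satisfies $\overline{V_i} \cap C \neq \emptyset$.

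For part (i) when $V \setminus C$ is connected, setting $W = V \setminus C$ gives $X \setminus W = C \cup (X \setminus V)$, where $C$ is compact and disjoint from the closed set $X \setminus V$; the two are separated in the Hausdorff space, so the components of $X \setminus W$ are $C$ together with the (finitely many, by Lemma~\ref{LeNoUnbdComp}) components of $X \setminus V$, forcing $W \in \mathscr{O}_{ss}^{*}(X)$. For the remaining situations (part (i) with $V \setminus C$ disconnected, and part (ii)), let $\{V_i\}_{i \in I}$ denote the components; the key intermediate claim is that each $V_i$ satisfies $\overline{V_i} \cap \partial V \neq \emptyset$. Otherwise, combined with $\overline{V_i} \cap V_j = \emptyset$ for $j \neq i$ (Remark~\ref{OpenComp}(c)), we would have $\overline{V_i} \subseteq V_i \cup C$, making $V_i$ clopen inside the ambient component $U_t$ of $X \setminus C$ containing it; but Lemma~\ref{LeAaCompInU} applied with $K = C$, $U = V$, together with $C$ solid, gives that $X \setminus C$ has only finitely many components, each unbounded in the noncompact case (or a single connected piece in the compact case), so $V_i$ clopen in $U_t$ forces $V_i = U_t$, contradicting either the boundedness of $V_i$ or the standing assumption that $V \setminus C$ has more than one component.

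With that observation, the solidity or semisolidity of each $V_i$ follows: $V \setminus V_i = C \cup \bigsqcup_{j \neq i} V_j$ is connected (each $V_j \cup C$ is connected by Remark~\ref{OpenComp}(a)), and by the preceding observation applied to some $V_j$ with $j \neq i$ its closure meets $X \setminus V$, so an analysis of $X \setminus V_i = (V \setminus V_i) \sqcup (X \setminus V)$ shows that every component of $X \setminus V_i$ is unbounded in case (i)(b) and that $X \setminus V_i$ has finitely many components in case (ii). The main obstacle is establishing that $|I|$ is finite: the tools at hand bound unbounded components of complements (Lemma~\ref{LeNoUnbdComp}) or components lying outside a given bounded open connected set (Lemma~\ref{LeAaCompInU}), but every $V_i$ is bounded and contained in $V$. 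My plan for this is to pick $y_i \in \overline{V_i} \cap \partial V$; if $|I|$ were infinite, the compact set $\partial V$ would contain a limit point $y_\infty$ of $\{y_i\}$, and a small connected neighborhood of $y_\infty$, combined with the finiteness of the components of $X \setminus V$ and $X \setminus C$ established above, should yield a contradiction through a local-connectedness argument.
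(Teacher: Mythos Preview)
Your overall architecture matches the paper's: decompose $V\setminus C$ into components $V_i$, show each $\overline{V_i}$ meets $X\setminus V$, and then read off the component structure of $X\setminus V_i$. Your ``key intermediate claim'' is precisely Lemma~\ref{LeSolidInV} (in the paper's language, $C\in\mathscr{C}_s(V)$), and your proof of it is essentially the paper's. Your argument that each $V_i$ is solid in case~(i) (disconnected) and semisolid in case~(ii) is also correct and matches the paper: $V\setminus V_i=C\cup\bigsqcup_{j\neq i}V_j$ is a single connected set, so $X\setminus V_i$ is that set together with the finitely many components of $X\setminus V$.

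The genuine gap is exactly where you flag it: the finiteness of $|I|$. Your limit-point plan on $\partial V$ does not obviously close. Chasing it through, you would need that a small connected neighbourhood $N$ of $y_\infty\in\partial V$ (with $N\cap C=\emptyset$ and $N$ meeting only one component $F_l$ of $X\setminus V$) has $N\cap V$ meeting only finitely many $V_i$; equivalently, that $N\setminus F_l$ has components lying in only finitely many $V_i$. But $F_l$ is just a closed connected set, and there is no a priori reason $N\setminus F_l$ cannot have infinitely many components distributed among infinitely many $V_i$. The finiteness of the components of $X\setminus V$ and of $X\setminus C$ that you invoke does not control this.

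The paper's resolution is a one-line subspace argument that you have already set up but not exploited. You work in $V$ to get $C\cap\overline{V_i}\neq\emptyset$ via Lemma~\ref{CmpCmplBdA}; do the same for finiteness. The open set $V$ is itself locally compact, connected, and locally connected, and $C$ is compact in $V$. Your key intermediate claim says every component of $V\setminus C$ is unbounded \emph{in $V$} (Remark~\ref{bddInV}), i.e.\ $C$ is solid in $V$. Now apply Lemma~\ref{LeNoUnbdComp} (equivalently Remark~\ref{ReFinNoComp}) \emph{in the space $V$}: a compact set in $V$ has only finitely many unbounded-in-$V$ complementary components. Since all components of $V\setminus C$ are unbounded in $V$, there are finitely many. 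This replaces your limit-point sketch entirely.
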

 
\begin{proof} 
(i)
Suppose  $V \in \mathscr{O}_{s}^{*}(X)$ and let 
$X \setminus V = \bigsqcup_{j \in J} F_j$
be the decomposition 
into connected components, so $J$ is a finite index set and  each $F_j$ is unbounded 
(respectively, $X \setminus V = F$ where $F$ is connected if $X$ is compact).
If $V \setminus C$ is connected then taking $W = V \setminus C$ we see that
$X \setminus W = (X \setminus V) \sqcup C =C \sqcup \bigsqcup_{ j \in J} F_j $
has finitely many components, i.e. $W \in \mathscr{O}_{ss}^{*}(X)$. \\
Now assume that $V \setminus C$ is not connected.  
By Lemma \ref{LeSolidInV}  $C \in \mathscr{C}_{s}(V)$. 
The set $ V \setminus C$ is also disconnected in $V$, so using Remark \ref{ReFinNoComp} let
$V \setminus C = \bigsqcup_{i=1}^n V_i, \ n \ge 2 $ 
be the decomposition into connected (unbounded in $V$)
components in $V$. Each $V_i$ is connected in $X$.
To show that each $V_i \in \mathscr{O}_{s}^{*}(X)$ 
we only need to check that the components of $X \setminus V_i$ are unbounded
(respectively, that $X \setminus V_i$ is connected if $X$ is compact).
For notational simplicity, we shall show it for $V_1$. For $2 \le i \le n$ 
by  Lemma \ref{LeSolidInV} 
$\overline{ V_i}$ intersects $X\setminus V$, hence, intersects some $F_j$. 
Let  $J_1 = \{ j \in J:  F_j \cap \overline{ V_i} \neq \emptyset \mbox{  for some  }  2 \le i \le n \} $.
By Lemma \ref{CmpCmplBdA} the set $ C \sqcup \bigsqcup_{i=2}^n V_i $ is connected in $V$, so by Remark \ref{bddInV} and Remark \ref{OpenComp} 
the set $\bigsqcup_{j \in J_1} F_j \sqcup C \sqcup \bigsqcup_{i=2}^n V_i $ is connected. 
It is also unbounded.
Now 
\begin{eqnarray*}
X \setminus V_1 =  (X \setminus V) \sqcup (V \setminus V_1)  = \bigsqcup_{j \in J} F_j \sqcup C \sqcup \bigsqcup_{i=2}^n V_i 
&=& (\bigsqcup_{j \in J_1} F_j \sqcup C \sqcup \bigsqcup_{i=2}^n V_i ) \sqcup \bigsqcup_{j \in J \setminus J_1} F_j.
\end{eqnarray*} 
Since $X \setminus V_1$ is the disjoint union of connected unbounded sets, it follows that
$V_1$ is solid. 
(If $X$ is compact $X \setminus V_1 =F \sqcup C \sqcup \bigsqcup_{i=2}^n V_i$ is connected. ) \\
(ii) 
Suppose $V \in \mathscr{O}_{ss}^{*}(X)$ and let $F_1, \ldots, F_k$ be the 
components of $X \setminus V$. 
Since $C$ is compact, by Lemma \ref{LeSolidInV} 
$C \in \mathscr{K}_{s}(V)$.
Let
$V \setminus C = \bigsqcup_{i=1}^n V_i, \ n \ge 1 $
be the decomposition into connected components in $V$ according to 
Remark \ref{ReFinNoComp}.  
Each $V_i$ is connected in $X$, and 
to show that each $V_i  \in \mathscr{O}_{ss}^{*}(X)$ we only need to check that $X \setminus V_i$ has finitely
many components. For simplicity, we shall show it for $V_1$. We have
$ X \setminus V_1 = (X \setminus V) \sqcup (V \setminus V_1)  = \bigsqcup_{j=1}^k F_j \sqcup C \sqcup \bigsqcup_{i\ne 1} V_i.$
Since $X \setminus V_1$ is a finite disjoint union of connected sets, the number of components
of $ X \setminus V_1$ is finite, so $V_1 \in \mathscr{O}_{ss}^{*}(X)$.
\end{proof} 

\begin{lemma} \label{finiteT}
Let $X$ be locally compact, connected, locally connected.
Suppose that 
$ V = \bigsqcup_{j=1}^m C_j \sqcup \bigsqcup_{t \in T} U_t, $
where $V \in \mathscr{O}_{ss}^{*}(X), \ C_i  \in \mathscr{K}_{s}(X), \ U_t \in \mathscr{O}_{c}^{*}(X)$. Then
 $T$ is finite, and each $U_t \in \mathscr{O}_{ss}^{*}(X)$.
\end{lemma}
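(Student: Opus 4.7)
The plan is a finite induction on $m$ using Lemma \ref{LeDecompV}(ii). Having fixed $V \in \mathscr{O}_{ss}^{*}(X)$, I would peel off the compact solid pieces $C_1, \ldots, C_m$ one at a time, maintaining the invariant that the open remainder is a finite disjoint union of sets in $\mathscr{O}_{ss}^{*}(X)$.

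More precisely, I would show by induction on $p \in \{0, 1, \ldots, m\}$ that there is a finite decomposition
\[
V \setminus (C_1 \sqcup \ldots \sqcup C_p) \;=\; \bigsqcup_{k=1}^{N_p} W_k^{(p)}, \qquad W_k^{(p)} \in \mathscr{O}_{ss}^{*}(X).
\]
The base $p=0$ is the hypothesis. For the step $p \to p+1$, note that $C_{p+1}$ is connected, disjoint from $C_1, \ldots, C_p$, and contained in $V$, so it lies in $V \setminus (C_1 \sqcup \ldots \sqcup C_p)$ and, being connected, in a unique component $W_{k_0}^{(p)}$. Apply Lemma \ref{LeDecompV}(ii) to the pair $C_{p+1} \subseteq W_{k_0}^{(p)}$ to split $W_{k_0}^{(p)}$ into finitely many sets in $\mathscr{O}_{ss}^{*}(X)$, leaving the other $W_k^{(p)}$ untouched; this yields the required decomposition at stage $p+1$.

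At stage $p = m$ I obtain $V \setminus \bigsqcup_{j=1}^m C_j = \bigsqcup_{k=1}^{N} W_k$ with each $W_k \in \mathscr{O}_{ss}^{*}(X)$. By hypothesis this same open set equals $\bigsqcup_{t \in T} U_t$. Each $U_t$ is open and connected, and its complement inside the disjoint union is $\bigsqcup_{s \ne t} U_s$, also open, so $U_t$ is clopen in $V \setminus \bigsqcup_j C_j$ and therefore one of its connected components. The identical argument identifies each $W_k$ as a connected component. Since the decomposition of a space into connected components is unique, $\{U_t : t \in T\} = \{W_k : 1 \le k \le N\}$, which gives both $|T| = N < \infty$ and $U_t \in \mathscr{O}_{ss}^{*}(X)$ simultaneously.

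The only point requiring care is the inductive bookkeeping, specifically that each $C_{p+1}$ lands in a single component produced at the previous stage; this follows at once from disjointness and connectedness. Beyond that, no new idea is needed, so I would expect the proof to be short.
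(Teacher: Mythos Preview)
Your proposal is correct and takes essentially the same approach as the paper: both argue by induction on $m$, repeatedly applying Lemma~\ref{LeDecompV}(ii) to split off one $C_j$ at a time, and then match the resulting finite decomposition against the $U_t$'s via uniqueness of connected components. The only cosmetic difference is that the paper verifies $U_t \in \mathscr{O}_{ss}^{*}(X)$ directly from the formula $X \setminus U_t = (X \setminus V) \sqcup \bigsqcup_i C_i \sqcup \bigsqcup_{r \ne t} U_r$ after $T$ is known to be finite, whereas you obtain it from the identification $U_t = W_k$.
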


\begin{proof}
The proof works for $X$ compact and noncompact and uses induction on $m$.  
Let $m=1$. Using Lemma \ref{LeDecompV} we have
$ V \setminus C_1 = \bigsqcup_{i=1}^n V_i = \bigsqcup_{t \in T} U_t.$
Since sets $V_i$'s and $U_t$'s are connected, $T$ must be finite.  Now let 
$V = \bigsqcup_{j=1}^m C_j \sqcup \bigsqcup_{t \in T} U_t $ and assume that the result holds for any 
bounded open semisolid set which contains less than $m$ compact solid sets. 
Using Lemma \ref{LeDecompV} we see that
$ V =  C_1 \sqcup \bigsqcup_{i=1}^n V_i =C_1 \sqcup \bigsqcup_{j=2}^m C_j \sqcup \bigsqcup_{t \in T} U_t,$
where $V_i \in \mathscr{O}_{ss}^{*}(X)$.
All involved sets are connected, so each set $V_i$ is a disjoint union of sets from the 
collection $\{ C_2, \ldots, C_m, U_t,  t \in T \}$. By the induction hypothesis each $V_i$ 
contains finitely many sets, and it follows that $T$ is finite.  Since $T$ is finite, we see that  for each $ t \in T$ the set
$X \setminus U_t =  (X \setminus V)  \sqcup \bigsqcup_{i=1}^m C_i \sqcup \bigsqcup_{r \in T,\, r \neq t} U_r$
has finitely many components, so $U_t \in \mathscr{O}_{ss}^{*}(X)$.
\end{proof}

\begin{lemma} \label{finiteSP}
Let $X$ be locally compact, connected, locally connected.
If $A = \bigsqcup_{t \in T} A_t$, $A , A_t \in \mathscr{A}_{s}^{*}(X)$  with at most finitely many
$A_t \in \mathscr{K}_{s}(X)$ then $T$ is finite. 
\end{lemma}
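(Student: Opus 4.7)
The plan is to split the index set as $T = T_1 \sqcup T_2$, where $T_1 = \{t \in T : A_t \in \mathscr{K}_s(X)\}$ is finite by hypothesis and each $A_t$ with $t \in T_2$ lies in $\mathscr{O}_s^*(X) \subseteq \mathscr{O}_c^*(X)$. Showing that $T$ is finite reduces to showing that $T_2$ is finite, and the natural tool is Lemma~\ref{finiteT}, applied in both of the alternatives $A \in \mathscr{O}_s^*(X)$ and $A \in \mathscr{K}_s(X)$.

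When $A \in \mathscr{O}_s^*(X)$, the set $A$ is itself open semisolid, and the given decomposition
\[ A \;=\; \bigsqcup_{j \in T_1} A_j \;\sqcup\; \bigsqcup_{t \in T_2} A_t \]
already has the form required by Lemma~\ref{finiteT}: the $A_j$ are the compact solid pieces and the $A_t$ are the bounded open connected pieces. The conclusion is that $|T_2| < \infty$, hence $T$ is finite.

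When $A \in \mathscr{K}_s(X)$, the difficulty is that Lemma~\ref{finiteT} needs an open semisolid ambient set while $A$ is only compact, so I would first enlarge. Using Lemma~\ref{LeCsInside}(2) if $X$ is noncompact, or Lemma~\ref{LeCsInside}(3) with $U = X$ if $X$ is compact (with the option $W = X$ in the degenerate case $A = X$), one finds $W \in \mathscr{O}_s^*(X)$ with $A \subseteq W$. Then Lemma~\ref{LeDecompV}(i) applied to the pair $A \subseteq W$ tells us that $W \setminus A$ splits into finitely many open connected bounded components $W_1, \ldots, W_r$, each in $\mathscr{O}_c^*(X)$. Substituting the given decomposition of $A$ yields
\[ W \;=\; \bigsqcup_{j \in T_1} A_j \;\sqcup\; \bigsqcup_{t \in T_2} A_t \;\sqcup\; \bigsqcup_{i=1}^{r} W_i, \]
which is once more of the form required by Lemma~\ref{finiteT}: finitely many compact solid summands together with bounded open connected ones. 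Lemma~\ref{finiteT} then forces the total index set of open pieces to be finite, so in particular $T_2$ is finite.

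The main obstacle is just the embedding step producing $W$: a compact $A$ gives no ambient open semisolid set to feed into Lemma~\ref{finiteT}, and one has to manufacture it without spawning infinitely many extra open components around $A$. Lemmas~\ref{LeCsInside} and~\ref{LeDecompV} are exactly what make this work cleanly; once $W$ and the $W_i$ are chosen, the rest is a direct invocation of Lemma~\ref{finiteT}.
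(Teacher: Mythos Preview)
Your proof is correct and follows essentially the same route as the paper: handle $A\in\mathscr{O}_s^*(X)$ by a direct appeal to Lemma~\ref{finiteT}, and for $A\in\mathscr{K}_s(X)$ embed $A$ in some $W\in\mathscr{O}_s^*(X)$ via Lemma~\ref{LeCsInside}, use Lemma~\ref{LeDecompV} to break $W\setminus A$ into finitely many open connected pieces, and then apply Lemma~\ref{finiteT} to the resulting decomposition of $W$. The only cosmetic difference is that the paper explicitly notes in the open case that $|T|>1$ forces at least one compact piece (else $A$ would be disconnected), whereas you fold this into the invocation of Lemma~\ref{finiteT}; either way the argument goes through.
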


\begin{proof}
Assume first that $A \in \mathscr{O}_{s}^{*}(X)$. If the cardinality $|T| > 1$ then there must be  a compact solid set 
among $A_t$, and the result follows from Lemma \ref{finiteT}. Assume now that $A \in \mathscr{K}_{s}(X)$
and  write  
$ A = \bigsqcup_{j=1}^m C_j \sqcup \bigsqcup_{t \in T} U_t, $
where $C_j \in \mathscr{K}_{s}(X), \ U_t \in \mathscr{O}_{s}^{*}(X)$. 
By Lemma \ref{LeCsInside} choose $ V \in \mathscr{O}_{s}^{*}(X)$ such that $A \subseteq V$. 
By Lemma \ref{LeDecompV} we may write $V \setminus A =  \bigsqcup_{i=1}^n V_i$, where 
$ V_i \in \mathscr{O}_{ss}^{*}(X)$. Then 
$ V =\bigsqcup_{j=1}^m C_j \sqcup \bigsqcup_{t \in T} U_t  \sqcup  \bigsqcup_{i=1}^n V_i, $
and by Lemma \ref{finiteT} $T$ is finite.
\end{proof}

\begin{lemma} \label{LeDecompU}
Let $X$ be non-compact, locally compact, connected, locally connected. 
Suppose $ C \subseteq U, \ \  C \in \mathscr{K}_{c}(X), \ \ U \in \mathscr{O}_{s}^{*}(X)$. 
If $\  U \setminus \widetilde C$ is disconnected then
$ U = C \sqcup \bigsqcup_{s \in S} V_s$ where $V_s \in \mathscr{O}_{s}^{*}(X).$
If $ \  U \setminus \widetilde C$ is connected then
$ U = C \sqcup \bigsqcup_{s \in S} V_s \sqcup W$ where  $ V_s \in \mathscr{O}_{s}^{*}(X), \ W \in \mathscr{O}_{ss}^{*}(X).$
\end{lemma}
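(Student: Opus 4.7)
The plan is to reduce this statement to Lemma~\ref{LeDecompV}(i) by passing from $C$ to its solid hull $\widetilde C$, and then to unfold $\widetilde C$ via its definition. The key observation is that $\widetilde C$ is a compact solid set contained in $U$, so Lemma~\ref{LeDecompV}(i) applies directly.

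First I would verify the prerequisites. Since $C \in \mathscr{K}_c(X)$, Lemma~\ref{PrSolidHuLC}\ref{part2} and~\ref{part4} give that $\widetilde C$ is bounded, solid, and compact, so $\widetilde C \in \mathscr{K}_s(X)$. From $C \subseteq U$ with $C, U \in \mathscr{A}_c^*(X)$, part~\ref{part1} yields $\widetilde C \subseteq \widetilde U$; but $U$ is already solid, so $\widetilde U = U$ by~\ref{part2}, and hence $\widetilde C \subseteq U$.

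Next I would apply Lemma~\ref{LeDecompV}(i) to the pair $\widetilde C \subseteq U$. If $U \setminus \widetilde C$ is disconnected, this produces $U = \widetilde C \sqcup \bigsqcup_{i=1}^n V_i$ with each $V_i \in \mathscr{O}_s^*(X)$; if $U \setminus \widetilde C$ is connected, it produces $U = \widetilde C \sqcup W$ with $W \in \mathscr{O}_{ss}^*(X)$. Finally, by Definition~\ref{solid hull}, $\widetilde C = C \sqcup \bigsqcup_{t \in T} B_t$, where the $B_t$ are the bounded components of $X \setminus C$. Since $X$ is locally connected and $X \setminus C$ is open, each $B_t$ is open by Remark~\ref{OpenComp}(c); each $B_t$ is bounded by the choice of $T$; and Lemma~\ref{SolidCompoLC}, applied with $A = C$, shows each $B_t$ is solid. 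Hence $B_t \in \mathscr{O}_s^*(X)$. Plugging this into the decomposition of $U$ and re-indexing all the bounded open solid pieces as $\{V_s\}_{s \in S}$ yields the two claimed forms.

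There is no genuine obstacle here: all the heavy lifting has already been done in Lemma~\ref{LeDecompV}, Lemma~\ref{PrSolidHuLC}, Lemma~\ref{SolidCompoLC}, and the definition of the solid hull. The only point worth noting is that because $U$ is solid, monotonicity of the solid-hull operation automatically places $\widetilde C$ inside $U$, so the hypotheses of Lemma~\ref{LeDecompV}(i) are met without any further effort.
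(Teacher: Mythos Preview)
Your proposal is correct and follows essentially the same route as the paper: pass to the solid hull $\widetilde C \in \mathscr{K}_s(X)$, observe $\widetilde C \subseteq U$ via Lemma~\ref{PrSolidHuLC}, apply Lemma~\ref{LeDecompV}(i), and then unfold $\widetilde C = C \sqcup \bigsqcup B_t$ using Lemma~\ref{SolidCompoLC}. Your write-up is in fact slightly more explicit than the paper's in justifying $\widetilde C \subseteq U$.
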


\begin{proof}
Note that $\widetilde C \in \mathscr{K}_{s}(X)$ and $\widetilde C \subseteq U$ by Lemma \ref{PrSolidHuLC}.  
Assume that $U \setminus \widetilde C$ is disconnected.  By Lemma \ref{LeDecompV} we may write
$ U = \widetilde C \sqcup \bigsqcup_{i=1}^n U_i$,  $ U_i \in \mathscr{O}_{s}^{*}(X)$.  But
$\widetilde C = C \sqcup \bigsqcup_{\alpha} V_{\alpha}$, where $V_{\alpha}$ are bounded components 
of $ X \setminus C$, so  by Lemma  \ref{SolidCompoLC} each $V_{\alpha} \in \mathscr{O}_{s}^{*}(X)$. 
After reindexing, one may write $ U = C \sqcup \bigsqcup_{s \in S} V_s$,  $V_s \in \mathscr{O}_{s}^{*}(X).$ 
The proof for the case  when 
$U \setminus \widetilde C$ is connected follows  similarly from  Lemma \ref{LeDecompV}. 
\end{proof}

\begin{lemma} \label{DecUssN}
Let $X$ be locally compact, connected, locally connected.
Suppose $ \bsc_{j=1}^n C_j  \subseteq V,  \ C_1, \ldots, C_n \in \mathscr{K}_{c}(X), \ V \in  \mathscr{O}_{ss}^{*}(X)$. 
Then $ V  = \bsc_{j=1}^n C_j  \sqcup \bigsqcup_{t \in T}  U_t $
where each $U_t \in  \mathscr{O}_{ss}^{*}(X)$ and all but finitely many are solid.
\end{lemma}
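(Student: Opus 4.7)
The plan is to prove this by induction on $n$, using the solid hulls $\widetilde{C_j}$ as a bridge between the connected (but not necessarily solid) compact sets $C_j$ and the decomposition machinery of Lemma \ref{LeDecompV}.

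For the base case $n=1$, I would start with $C_1 \in \mathscr{K}_{c}(X)$ and pass to its solid hull $\widetilde{C_1}$. By Lemma \ref{PrSolidHuLC} this is a compact solid set containing $C_1$ and contained in $V$. Applying Lemma \ref{LeDecompV}(ii) to $\widetilde{C_1} \subseteq V \in \mathscr{O}_{ss}^{*}(X)$ gives a finite decomposition
\[
V = \widetilde{C_1} \sqcup \bigsqcup_{i=1}^{k} W_i, \qquad W_i \in \mathscr{O}_{ss}^{*}(X).
\]
On the other hand, by the definition of solid hull, $\widetilde{C_1} = C_1 \sqcup \bigsqcup_{\alpha \in A} V_\alpha$, where $V_\alpha$ are the bounded components of $X \setminus C_1$. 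Since $C_1 \in \mathscr{A}_c^{*}(X)$, Lemma \ref{SolidCompoLC} tells us that each $V_\alpha$ is a bounded solid set; being a component of an open set in a locally connected space (Remark \ref{OpenComp}(c)), each $V_\alpha$ is open, hence $V_\alpha \in \mathscr{O}_s^{*}(X) \subseteq \mathscr{O}_{ss}^{*}(X)$. Substituting yields
\[
V = C_1 \sqcup \bigsqcup_{\alpha \in A} V_\alpha \sqcup \bigsqcup_{i=1}^{k} W_i,
\]
where the $V_\alpha$ are all solid and only the finitely many $W_i$ may fail to be solid, giving exactly the claimed form.

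For the inductive step, assume the claim for $n-1$ disjoint compact connected sets. Apply it to $\bigsqcup_{j=1}^{n-1} C_j \subseteq V$ to obtain
\[
V = \bigsqcup_{j=1}^{n-1} C_j \sqcup \bigsqcup_{t \in T'} U_t',
\]
with each $U_t' \in \mathscr{O}_{ss}^{*}(X)$ and all but finitely many of the $U_t'$ solid. Since $C_n$ is connected, disjoint from $C_1,\ldots,C_{n-1}$, and contained in $V$, it must lie in a single piece $U_{t_0}'$. Apply the base case to $C_n \subseteq U_{t_0}'$ (noting $U_{t_0}' \in \mathscr{O}_{ss}^{*}(X)$ whether or not it is solid) to decompose
\[
U_{t_0}' = C_n \sqcup \bigsqcup_{\alpha} V_\alpha \sqcup \bigsqcup_{i=1}^{k} W_i,
\]
with $V_\alpha \in \mathscr{O}_s^{*}(X)$ and finitely many $W_i \in \mathscr{O}_{ss}^{*}(X)$. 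Substituting this back into the decomposition of $V$ reindexes the non-$C_j$ pieces as a disjoint union of: (i) the $U_t'$ with $t \neq t_0$, all but finitely many solid; (ii) the $V_\alpha$, all solid; (iii) the finitely many $W_i$. Thus all pieces are in $\mathscr{O}_{ss}^{*}(X)$, and all but finitely many are solid, which is precisely the required conclusion.

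I do not anticipate a genuine obstacle here, since each step is a direct appeal to results already established: Lemma \ref{PrSolidHuLC} (solid hulls are bounded and compact), Lemma \ref{SolidCompoLC} (components of the complement of a connected set are solid), and Lemma \ref{LeDecompV}(ii) (decomposition of a bounded open semisolid set around a compact solid set). The one point requiring mild care is bookkeeping the distinction between the possibly infinite index set of solid pieces and the necessarily finite index set of non-solid pieces when combining the base case with the inductive hypothesis, but this is handled automatically because at each application of Lemma \ref{LeDecompV}(ii) only \emph{finitely many} new semisolid pieces are introduced, while the potentially infinite families consist entirely of solid sets.
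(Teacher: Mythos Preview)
There is a genuine gap in your base case. You assert that $\widetilde{C_1} \subseteq V$, citing Lemma \ref{PrSolidHuLC}, but that lemma only yields $\widetilde{C_1} \subseteq \widetilde{V}$, and for a merely semisolid $V$ one may have $V \subsetneq \widetilde{V}$. Concretely, take $X = \mathbb{R}^2$, $V = \{\, z : 1 < |z| < 3 \,\}$ (an open annulus, so $V \in \mathscr{O}_{ss}^{*}(X)$ but $V$ is not solid, since $\{|z|\le 1\}$ is a bounded component of $X \setminus V$), and $C_1 = \{|z| = 2\}$. Then $\widetilde{C_1} = \{|z| \le 2\} \not\subseteq V$, and Lemma \ref{LeDecompV}(ii) cannot be applied as you intend. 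When $X$ is compact the situation is even starker: by Remark \ref{leSolidHu} one has $\widetilde{C_1} = X$. Since your inductive step invokes the base case inside an arbitrary $U_{t_0}' \in \mathscr{O}_{ss}^{*}(X)$, the error propagates through the whole argument.

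The paper's proof avoids solid hulls in the base case altogether. It works directly with $C_1$ inside $V$ equipped with the subspace topology: the connected components $U_t$ of $V \setminus C_1$ are shown to lie in $\mathscr{O}_{ss}^{*}(X)$ by writing $X \setminus U_t = (X \setminus V) \sqcup \big( C_1 \sqcup \bigsqcup_{r \ne t} U_r \big)$ and observing that the right-hand side has finitely many connected pieces. Then Lemma \ref{LeNoUnbdComp} applied in the subspace $V$ shows that all but finitely many $U_t$ are bounded in $V$, and for those Lemma \ref{SolidCompoLC} (in $V$) together with Lemma \ref{solidV2X} transfers solidity to $X$. The inductive step in the paper is essentially the same as yours.
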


\begin{proof}
The proof is by induction on $n$.  Suppose  $n=1$, i.e. $ C \se V, C \in \mathscr{K}_{c}(X), \ V \in  \mathscr{O}_{ss}^{*}(X)$.  
Then $C \in  \mathscr{K}_{c} (V)$, where  $V$ is equipped with the subspace topology.
Being open, $V$ is a locally compact, locally connected subspace.
Let 
$V \sm C = \bigsqcup_{t \in T}  U_t $
be the decomposition into components in $V$, i.e. each $U_t \in  \mathscr{O}_{c}(V)$, and
so  $U_t \in  \mathscr{O}_{c}^{*}(X)$. 
By Lemma \ref{CmpCmplBdA} $C \cap \overline{U_t}^V \ne \O$, so also $  C \cap \overline{U_t} \ne \O$ for each $t$. 
Then
$ X \sm U_t = (X \sm V) \sc (C \sqcup \bigsqcup_{r \in T, r \ne t}  U_r),$
where the set in the last parenthesis is connected by Remark \ref{OpenComp}.
Since $V$ is semisolid, $ X \sm U_t $ has finitely many components, so $U_t \in  \mathscr{O}_{ss}^{*}(X)$ for each $t$.
By Lemma \ref{LeNoUnbdComp} all but finitely many of $U_t$'s are bounded in $V$, 
hence, by Lemma \ref{SolidCompoLC} belong to  $\mathscr{O}_{s}^{*}(V)$, hence, by Lemma \ref{solidV2X} belong to  $\mathscr{O}_{s}^{*}(X)$.
 
Assume the result for less than $n$ compact connected sets. For $n$ compact connected sets we have
$ V \sm  (\bsc_{j=1}^n C_j) = (V \sm C_1) \sm (C_2 \sc \ldots \sc C_n) = (\bsc_{t \in T} U_t)  \sm (C_2 \sc \ldots \sc C_n),$
where by the first part finitely many of the sets $U_t$ are in $ \mathscr{O}_{ss}^{*}(X)$ and  the rest are in  $ \mathscr{O}_{s}^{*}(X)$.
$C_2, \ldots, C_n$ are contained in finitely many of $U_t$'s, each of which contains less than $n$ disjoint compact connected sets, 
and the result follows by induction. 
\end{proof}

\begin{lemma}  \label{ossdec}
Let $X$ be locally compact, noncompact, connected, locally connected.
Suppose that $  \bigsqcup_{i=1}^n F_i  \se W$, where $F_1, \ldots, F_n \in \mathscr{K}_{s}(X), W \in  \mathscr{O}_{s}^{*}(X)$, and 
$ W \sm (F_1 \sc \ldots \sc F_n) $ is disconnected. Then there is a solid decomposition
$W = \bsc_{i \in I} F_i \sc \bsc_{j=1}^p U_j$, where $ \O \ne I \se \{ 1, \ldots, n\},  \, p \in \N, \  U_j \in  \mathscr{O}_{s}^{*}(X)$.  
\end{lemma}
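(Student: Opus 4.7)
The plan is to proceed by strong induction on $n$. The base case $n=1$ is immediate from Lemma \ref{LeDecompV}(i): since $W \setminus F_1$ is disconnected by hypothesis, that lemma yields $W = F_1 \sqcup \bigsqcup_{j=1}^p U_j$ with $U_j \in \mathscr{O}_s^*(X)$ and $p \in \N$, so I take $I = \{1\}$.

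For the inductive step ($n \geq 2$), I would first ask whether some single $F_{i_0}$ already disconnects $W$. If so, Lemma \ref{LeDecompV}(i) produces $W = F_{i_0} \sqcup \bigsqcup_{k=1}^r V_k$ with each $V_k \in \mathscr{O}_s^*(X)$ and $r \in \N$. Each remaining $F_i$ ($i \neq i_0$) is connected and hence lies in a unique $V_k$, which partitions $\{1, \ldots, n\} \setminus \{i_0\}$ into $J_k = \{i \neq i_0 : F_i \subseteq V_k\}$. For each $V_k$: if $J_k = \emptyset$ or $V_k \setminus \bigsqcup_{i \in J_k} F_i$ is connected, I keep $V_k$ as one of the open solid pieces (absorbing any $F_i$ with $i \in J_k$); otherwise, since $|J_k| < n$, the inductive hypothesis gives a solid decomposition $V_k = \bigsqcup_{i \in I_k} F_i \sqcup \bigsqcup_m U_m^k$ with $I_k \subseteq J_k$ nonempty and $U_m^k \in \mathscr{O}_s^*(X)$. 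Reassembling all pieces yields the desired decomposition of $W$ with $i_0 \in I$ and only finitely many $U_j$'s.

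The main obstacle is the remaining case, in which $W \setminus F_i$ is connected for every single $i$. Here I would select a minimal subset $I_0 \subseteq \{1, \ldots, n\}$ (necessarily with $|I_0| \geq 2$) for which $W \setminus \bigsqcup_{i \in I_0} F_i$ is disconnected, and aim to show directly that the components $U_1, \ldots, U_p$ of this set are all bounded open solid, so that $W = \bigsqcup_{i \in I_0} F_i \sqcup \bigsqcup_{j=1}^p U_j$ works with $I = I_0$. Finiteness of $p$ comes from Lemma \ref{DecUssN} together with iterated use of Lemma \ref{LeDecompV}(ii). Minimality of $I_0$ forces $F_i \sqcup \bigsqcup_j U_j = W \setminus \bigsqcup_{i' \in I_0 \setminus \{i\}} F_{i'}$ to be connected for every $i \in I_0$, which in turn forces $\overline{U_j} \cap F_i \neq \emptyset$ for every $j$ and every $i \in I_0$.

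The delicate part is then to show that $X \setminus U_j = (X \setminus W) \cup \bigsqcup_{i \in I_0} F_i \cup \bigsqcup_{k \neq j} U_k$ is connected (and hence unbounded) for each $j$. Using Lemma \ref{LeSolidInV}-style closure arguments together with Remark \ref{OpenComp}, I would first note that at least one component $U_k$ must satisfy $\overline{U_k} \cap (X \setminus W) \neq \emptyset$, because $W$ is connected and the $F_i$ are bounded away from $\partial W$. Combined with the above universal adjacency of each $F_i$ ($i \in I_0$) to every $U_j$, this rules out the formation of any bounded ``island'' component inside $X \setminus U_j$ and forces connectivity. The trickiest point — and the place where I expect to have to work hardest — is verifying this absence of trapped islands in full generality; if the adjacency structure from minimal $I_0$ proves insufficient, I would refine by restricting to a smaller open solid ambient obtained from Lemma \ref{LeDecompV}(i) or Lemma \ref{LeDecompU} and recursing.
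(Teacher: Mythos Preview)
Your route differs from the paper's: the paper does not induct on $n$ or isolate a minimal disconnecting subset, but instead writes (via Lemma~\ref{finiteT}) $W=\bigsqcup_{i} F_i\sqcup\bigsqcup_{j=1}^m W_j$ with $W_j\in\mathscr{O}_{ss}^*(X)$ and then passes to the \emph{solid hulls} $\widetilde{W_j}$, extracting maximal elements among them to serve as the $U_j$'s.

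More importantly, your ``delicate part'' is a genuine gap, and the claim underlying it is false: for a minimal disconnecting $I_0$ the components of $W\setminus\bigsqcup_{i\in I_0}F_i$ need not all lie in $\mathscr{O}_s^*(X)$. Take $X=T^2\setminus\{*\}$ (the punctured $2$-torus), let $F_1,F_2$ be disjoint closed annular neighbourhoods of two parallel meridians, and let $W=T^2\setminus\overline D$ where $D$ is a small open disc about $*$ lying inside one of the two complementary open annuli $A_1,A_2$ of $T^2\setminus(F_1\cup F_2)$, say $\overline D\subseteq A_1$. One checks that $W\in\mathscr{O}_s^*(X)$, each $F_i\in\mathscr{K}_s(X)$, neither $F_i$ alone disconnects $W$, yet $W\setminus(F_1\sqcup F_2)=(A_1\setminus\overline D)\sqcup A_2$. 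Now $U_2=A_2$ is solid while $U_1=A_1\setminus\overline D$ is not: $X\setminus U_1$ is the disjoint union of the unbounded punctured disc $\overline D\setminus\{*\}$ and the \emph{bounded} set $F_1\cup F_2\cup A_2$. Your universal-adjacency observation does hold here (each $F_i$ meets both $\overline{U_1}$ and $\overline{U_2}$), yet solidity of $U_1$ still fails because only $U_1$ touches $X\setminus W$; removing $U_1$ strands the island $F_1\cup F_2\cup U_2$. Your fallback of shrinking to a smaller solid ambient does not obviously rescue this, since the obstruction comes from the handle of the torus and persists under such shrinking. The solid-hull mechanism in the paper is aimed exactly at absorbing such islands rather than hoping they do not occur.
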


\begin{proof}
By Lemma \ref{finiteT} write
$W = \bigsqcup_{i=1}^n F_i  \sc \bsc_{j=1}^m W_j, $ 
where $m \ge 2$, and $W_j  \in \mathscr{O}_{ss}^{*}(X)$.
Let $W_1, \ldots, W_p$ ($p \le m$) be maximal elements among $W_1, \ldots, W_n$ with respect to the partial order given by 
$W_j \le W_t$ iff $ \widetilde W_j \subsetneqq  \widetilde W_t$. 
By Lemma \ref{PrSolidHuLC} $\widetilde W_i \cap \widetilde W_t = \emptyset$ for any two maximal elements $W_j, W_t$.
Let $I$ be the index set for  $F_i$'s each of which is not contained in $ \bsc_{j=1}^p \widetilde W_j$. Sets $\widetilde W_j \se W$, so
$  \bsc_{j=1}^p \widetilde{W_j} \sc \bsc_{i \in I} F_i \se W.$
For a maximal element $W_j$ and a bounded component  $B$ of $X \sm W_j$, since $B \se W$ we have 
$ B \se  \bigsqcup_{i=1}^n F_i  \cup \bigsqcup_{k \ne j} W_k \se X \sm W_j.$ 
Each set from the family  $ \{ F_1, \ldots, F_n, W_k, \, k \ne j \}$ intersects only one component of $ X \sm W_j$, 
so $B$ is a union of some sets from this family. Then 
 $ \widetilde{W_j}$ is a union of some sets from the family $\{ F_1, \ldots, F_n, W_1, \ldots, W_m \}$. 
Together with Remark \ref{ordering} it gives
$  W =  \bsc_{j=1}^p \widetilde{W_j} \sc \bsc_{i \in I} F_i.$
If $I \ne \O$, the statement of the lemma follows with $U_j= \widetilde W_j$. 

Assume that $I = \O$. By connectedness, $W$ is the single maximal element among $W_1, \ldots, W_m$. Without loss of generality,  
$W = \widetilde{W_m}$, and so $ \widetilde W_i \subsetneqq \widetilde{W_m}$ for $ i =1, \ldots, m-1$.  
Among sets $W_1, \ldots, W_{m-1}$ again find maximal elements $W_1, \ldots, W_q$, $ q \le m-1$. 
As above, $W =  \widetilde{W_m} = \bsc_{j=1}^q  \widetilde{W_j} \sc \bsc_{i \in I'} F_i$.
If there is only one maximal element (say, $W_1$) then $I' \neq \O$ because  $ \widetilde{W_1} \subsetneqq \widetilde{W_m} $.
If there are at least two maximal elements then $I' \neq \O$ because of connectedness of $W$. 
\end{proof}

\begin{remark}
Part \ref{part5} of Lemma \ref{PrSolidHuLC}, part \ref{KWsol} of Lemma \ref{LeCsInside},  
and the last statement in Lemma \ref{LeSolidInV}    
are close to  \cite[Lemmas 3.8, 3.9, 4.2]{Aarnes:LC}. The existence of $V$ in the last part of Lemma \ref{LeCsInside} was first proved in 
\cite[Lemma 3.3]{Aarnes:ConstructionPaper}.
The case " $V \setminus C$ is disconnected" in the first part of Lemma \ref{LeDecompV}  is \cite[Lemma 4.3]{Aarnes:LC}, and 
Lemma \ref{finiteSP} is an expanded (to compact sets as well) version of  \cite[Lemma 4.4]{Aarnes:LC}. 
Our proofs are modified, expanded, or different.
\end{remark}

\begin{remark}
For a compact space, solid sets are connected and co-connected; they are essential in the construction of a topological measure 
from a solid-set function.
It is tempting to employ sets that are connected and co-connected in the locally compact setting, but  
this is not the right collection of sets for various reasons. 
For one, one may end up with topological measure $ \mu = 0$ starting from a non-trivial set function on sets 
that are connected and co-connected (see \cite[Example 6.2]{Aarnes:LC}). 
The key role in the locally compact noncompact case will be played by semisolid sets. 
For a compact space the definition of a semisolid set as a set whose complement has finitely many components was present in \cite{Grubb:Lectures}, 
but it is not clear, who coined the term. 
The same definition of semisolid set in locally compact spaces works well for our goals. 
The definition of solid sets is different  for compact spaces and locally compact noncompact spaces (\cite{Aarnes:ConstructionPaper}, \cite{Aarnes:LC}),
but as we shall see, it is what allows us to prove many results simultaneously for compact and noncompact spaces. 
Also, with the current definition of a solid set in the locally compact case, a bounded solid set in a locally compact space is solid 
in its one-point compactification (Lemma \ref{hatXsoli} below). 
\end{remark}

\section{Definition and basic properties of topological measures} \label{TM}

\begin{definition} \label{TMLC}
A topological measure on a locally compact space $X$ is a set function
$\mu:  \mathscr{C}(X) \cup \mathscr{O}(X) \to [0,\infty]$ satisfying the following conditions:
\begin{enumerate}[label=(TM\arabic*),ref=(TM\arabic*)]
\item \label{TM1} 
if $A,B, A \sqcup B \in \mathscr{K}(X) \cup \mathscr{O}(X) $ then
$
\mu(A\sqcup B)=\mu(A)+\mu(B);
$
\item \label{TM2}  
$
\mu(U)=\sup\{\mu(K):K \in \mathscr{K}(X), \  K \subseteq U\}
$ for $U\in\mathscr{O}(X)$;
\item \label{TM3}
$
\mu(F)=\inf\{\mu(U):U \in \mathscr{O}(X), \ F \subseteq U\}
$ for  $F \in \mathscr{C}(X)$.
\end{enumerate}
\end{definition} 

\begin{remark} 
It is important that in Definition \ref{TMLC} condition \ref{TM1} holds for sets from 
$\mathscr{K}(X) \cup \mathscr{O}(X)$. In fact, \ref{TM1} fails on $\mathscr{C}(X) \cup \mathscr{O}(X)$. See Example \ref{puncdisk} or 
Example \ref{linetm} below. 
\end{remark}

We have the following immediate properties of topological measures on locally compact spaces. 

\begin{lemma} \label{propTMLC}
The following is true for a topological measure:
\begin{enumerate}[label=(t\arabic*),ref=(t\arabic*)]  
\item \label{l1}
$\mu$ is monotone, i.e. if $ A \subseteq B, \ A, B \in  \mathscr{C}(X) \cup \mathscr{O}(X)$ then $\mu(A) \le \mu(B)$. 
\item \label{smooth}
If an increasing net $U_t \nearrow U$, where $U_t, U \in \mathscr{O}(X)$ then
 $\mu(U_t) \nearrow \mu(U)$.
In particular, $\mu$ is additive on $\mathscr{O}(X)$. 
\item \label{l5}
$\mu( \emptyset) = 0$.
\item \label{kl}
If $V \sqcup K \subseteq U$, where $U , V \in \mathscr{O}(X), \ K \in \mathscr{K}(X)$ then $\mu(V) + \mu(K) \le \mu(U).$
\item \label{l2}
If $\mu$ is compact-finite then $\mu(A) < \infty$ for each $A \in \mathscr{A}^{*}(X)$.
$\mu$ is finite iff $\mu$ is real-valued.
\item \label{CoRegulLC}
If $X$ is locally compact, locally connected then for any $U \in \mathscr{O}(X)$ 
$$\mu(U)=\sup \{ \mu(C): \ C \in \mathscr{K}_{0}(X), \ C\subseteq U\}.$$
\item \label{l8}
If $X$ is locally compact, connected, locally connected then  
$ \mu(X)  = \sup\{ \mu(K) :  \ K \in \mathscr{K}_{c}(X) \} = \sup\{ \mu(K) :  \ K \in \mathscr{K}_{s}(X) \} .$
\end{enumerate}
\end{lemma}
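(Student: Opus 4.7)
The plan is to derive each of the eight items from the three axioms (TM1)--(TM3), feeding earlier items into later ones. Since $\mu$ is defined on $\mathscr{C}(X) \cup \mathscr{O}(X)$ (which has no algebraic structure), the arguments all amount to small case analyses built from (TM2), (TM3), and Remark \ref{netsSETS}. The most delicate bookkeeping is (l1), where the four combinations (open/closed inside open/closed) must each be reduced to the inner or outer regularity axiom. I would first verify (l5): since $\mu$ is not identically $\infty$, some $A \in \mathscr{C}(X) \cup \mathscr{O}(X)$ has $\mu(A) < \infty$; applying (TM3) when $A$ is closed we obtain an open set of finite measure, and then (TM2) applied to that open set with $K = \emptyset$ forces $\mu(\emptyset) < \infty$. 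Applying (TM1) to $\emptyset = \emptyset \sqcup \emptyset$ then yields $\mu(\emptyset) = 0$.

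For (l1) I would proceed in stages. Compact-in-open is immediate from the supremum in (TM2); open-in-open follows by supremizing the previous case over compact $K \subseteq U_1$; closed-in-open is direct from (TM3), since the open set in question is one of the competitors in the infimum; closed-in-closed follows because every open neighborhood of the larger closed set is also a neighborhood of the smaller, so the infima in (TM3) are ordered; and open-in-closed follows because for each open $V \supseteq F$ and each compact $K \subseteq U$, compact-in-open gives $\mu(K) \le \mu(V)$, whence $\mu(K) \le \mu(F)$ by (TM3), and one supremizes over $K$ to obtain $\mu(U) \le \mu(F)$.

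For (smooth), monotonicity yields $\sup_t \mu(U_t) \le \mu(U)$, while the reverse follows from (TM2) combined with Remark \ref{netsSETS}(ii): every compact $K \subseteq U$ eventually sits inside some $U_{t_0}$, so $\mu(K) \le \sup_t \mu(U_t)$, and supremizing over $K$ yields $\mu(U) \le \sup_t \mu(U_t)$. Additivity on $\mathscr{O}(X)$ for arbitrary disjoint families then follows by applying (smooth) to the increasing net of finite sub-unions, whose measure is handled by (TM1). For (kl), apply (TM2) to $V$: for each compact $K' \subseteq V$, the set $K \sqcup K'$ is compact and lies in $U$, so (TM1) and compact-in-open monotonicity give $\mu(K') + \mu(K) = \mu(K' \sqcup K) \le \mu(U)$, and supremizing over $K'$ concludes. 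For (l2), a bounded open $A$ has compact closure, so monotonicity yields $\mu(A) \le \mu(\overline{A}) < \infty$; the ``finite iff real-valued'' equivalence is immediate from monotonicity with $B = X$ (which is itself both open and closed).

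Finally, for (CoRegulLC) I would combine (TM2) with Lemma \ref{LeCCoU}: every compact $K \subseteq U$ is enclosed in some $C \in \mathscr{K}_{0}(X)$ with $C \subseteq U$, and monotonicity transfers the supremum. For (l8), (TM2) reduces $\mu(X)$ to a supremum over compact sets; Lemma \ref{LeConLC} applied with the connected open set $U = X$ encloses each compact in a compact connected one, and either Lemma \ref{PrSolidHuLC} (in the noncompact case, via the solid hull) or $X$ itself (in the compact case) encloses each compact connected set in a compact solid one, so monotonicity preserves both equalities. The main obstacle is merely keeping the stages of (l1) in the correct order so that each later stage is legitimately allowed to invoke the earlier ones.
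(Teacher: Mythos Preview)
Your proposal is correct and follows essentially the same approach as the paper. The paper's proof is extremely terse (e.g., ``Immediate from Definition \ref{TMLC}'' for (t1), ``Easy to see'' for (t3) and (t4)), and what you have written is exactly the case analysis that those phrases suppress; your ordering (doing (t3) before (t1), and handling the five open/closed configurations in (t1) explicitly) is sound and matches the intended arguments, while (t2), (t4)--(t7) coincide with the paper's references to Remark \ref{netsSETS}, (TM2), Lemma \ref{LeCCoU}, Lemma \ref{LeConLC}, and Lemma \ref{PrSolidHuLC}.
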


\begin{proof}
(t1)
Immediate from Definition \ref{TMLC}. 
(t2)
Suppose $U_t \nearrow U, U_t, U \in \mathscr{O}(X)$.  
Let compact $K \subseteq U.$   By Remark \ref{netsSETS}, there is $t' $ such that $K \subseteq U_s$ for all $t \ge t'$.
Then $\mu(K) \le \mu(U_t) \le \mu(U)$ for all $t \ge t'$, and we see from the inner regularity condition \ref{TM2} of Definition \ref{TMLC}
(whether $ \mu(U) < \infty$ or $ \mu(U) =\infty$)  that $\mu(U_t) \nearrow \mu(U)$. 
(t3)
Easy to see since $ \mu$ is not identically $ \infty$.  
(t4) 
Easy to see from part \ref{TM2} of Definition \ref{TMLC}. 
(t5)
If $U$ is an open bounded set then $ \mu(U) \le \mu(\overline U) < \infty$. 
(t6)
By Lemma \ref{LeCCoU} for arbitrary $K \subseteq U, \ K \in \mathscr{K}(X), \ U  \in \mathscr{O}(X)$ there is 
$C \in \mathscr{K}_{0}(X)$ with $ K \subseteq C \subseteq U$. By monotonicity 
$ \mu(U) = \sup\{ \mu(K): \ K \in \mathscr{K}, \ K \subseteq U\} 
\le  \sup\{ \mu(C): \ C \in \mathscr{K}_{0}(X), \ K \subseteq C \subseteq U\}  \le \mu(U). $
(t7)
Follows from Lemma  \ref{LeConLC} and Lemma \ref{PrSolidHuLC}.
\end{proof}

\begin{proposition} \label{PrFinAddLC}
Let $X$ be locally compact.
Consider the following conditions:
\begin{enumerate}[label=(c\arabic*),ref=(c\arabic*)]
\item \label{usl2}
$\mu(U) = \mu(K) + \mu(U \setminus K) $ whenever $K \subseteq U, \ K \in \mathscr{K}(X), \ U \in  \mathscr{O}(X).$
\item \label{usl1'}
$\mu(K \sqcup C ) = \mu(K) + \mu(C) $ for any disjoint compact sets $K,C.$
\item \label{usl1}
$\mu(U \sqcup V ) = \mu(U) + \mu(V) $ for any disjoint open sets $U,V.$
\end{enumerate}
For a set function $\mu: \mathscr{O}(X) \cup \mathscr{C}(X) \rightarrow [0,\infty] $ we have:
\begin{enumerate}
\item
If $ \mu$ satisfies \ref{TM2}  of Definition \ref{TMLC} then \ref{usl1'} $ \Rightarrow$ \ref{usl1}.
\item \label{compclo}
If $ \mu$ is monotone on open sets and  satisfies \ref{TM3}   then \ref{usl1} $ \Rightarrow$ \ref{usl1'}.
\item \label{coopA}
If $ \mu$ satisfies \ref{TM2}  and \ref{TM3}  then  \ref{usl1} and  \ref{usl1'} are equivalent.
\item
If $\mu$ satisfies \ref{TM2}, \ref{TM3},  \ref{usl2} and one of \ref{usl1'}, \ref{usl1} then \ref{TM1}  holds for $\mu$; hence, 
$\mu$ is a topological measure.
\end{enumerate}
\end{proposition}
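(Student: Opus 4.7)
The plan is to prove parts (1), (2), and (4) in sequence; part (3) is then immediate from (1) and (2). Throughout, I will freely use the monotonicity of $\mu$ on compact subsets of open sets (immediate from the sup in (TM2)) and on closed subsets of open sets (immediate from the inf in (TM3)).

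For Part (1), inner regularity reduces $\mu(U \sqcup V)$ to compact approximations. Given any compact $K \subseteq U \sqcup V$ with $U, V$ disjoint open, the pieces $K \cap U = K \setminus V$ and $K \cap V = K \setminus U$ are disjoint compact sets, so (c2) yields $\mu(K) = \mu(K \cap U) + \mu(K \cap V) \leq \mu(U) + \mu(V)$; taking the supremum over $K$ via (TM2) gives $\mu(U \sqcup V) \leq \mu(U) + \mu(V)$. For the reverse direction, pick compact $K_1 \subseteq U$ and $K_2 \subseteq V$ with $\mu(K_i)$ approximating $\mu(U_i)$ from below (arbitrarily large if $\mu(U_i) = \infty$); then $K_1 \sqcup K_2 \subseteq U \sqcup V$ is a disjoint compact union, and (c2) together with monotonicity yields $\mu(K_1) + \mu(K_2) \leq \mu(U \sqcup V)$, so passing to the limit gives the matching bound.

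For Part (2), the Hausdorff hypothesis lets me separate disjoint compacts $K_1, K_2$ by disjoint open sets $W_1, W_2$. For the $\leq$ direction, (TM3) gives open $U_i' \supseteq K_i$ with $\mu(U_i')$ close to $\mu(K_i)$; setting $U_i := U_i' \cap W_i$ preserves the bound by monotonicity on open sets and makes $U_1, U_2$ disjoint. Then (c3) on $U_1 \sqcup U_2$, combined with outer regularity of the closed set $K_1 \sqcup K_2$ inside $U_1 \sqcup U_2$, closes the argument. The $\geq$ direction is symmetric: start from an outer open $W \supseteq K_1 \sqcup K_2$ with $\mu(W)$ close to $\mu(K_1 \sqcup K_2)$ and take $U_i := W \cap W_i$. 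Part (3) follows at once by combining (1) and (2).

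For Part (4), I verify (TM1) case by case. The pure-compact and pure-open cases are exactly (c2) and (c3), both available by Parts (1)--(3). When $A$ is compact, $B$ is open, and $A \sqcup B$ is open, (c1) applied to $K = A \subseteq U = A \sqcup B$ gives $\mu(A \sqcup B) = \mu(A) + \mu(B)$ since $U \setminus K = B$. The subtle case is $A$ compact, $B$ open, and $A \sqcup B$ compact. For any open $W \supseteq A \sqcup B$, applying (c1) once to $A \subseteq W$ and once to the compact set $A \sqcup B \subseteq W$, and applying (c3) to the disjoint open decomposition $W \setminus A = B \sqcup (W \setminus (A \sqcup B))$, yields $\mu(A) + \mu(B) + \mu(W \setminus (A \sqcup B)) = \mu(W) = \mu(A \sqcup B) + \mu(W \setminus (A \sqcup B))$. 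When $\mu(A \sqcup B) < \infty$ one chooses $W$ via (TM3) with $\mu(W) < \infty$ and cancels to conclude. The main obstacle is ruling out the pathological scenario $\mu(A), \mu(B) < \infty$ with $\mu(A \sqcup B) = \infty$: here I apply (TM3) to $A$ alone to get open $W \supseteq A$ with $\mu(W) < \infty$, and rerun the same identities on the open set $W \cup B$, obtaining $\mu(W \cup B) = \mu(A) + \mu(B) + \mu(W \setminus (A \cup B)) \leq \mu(W) + \mu(B) < \infty$ (the last inequality using (c1) applied to $A \subseteq W$ plus monotonicity on open). Since $A \sqcup B \subseteq W \cup B$ and the former is closed, outer regularity (TM3) gives $\mu(A \sqcup B) \leq \mu(W \cup B) < \infty$, the desired contradiction.
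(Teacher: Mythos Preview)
Your proof is correct. Parts (1)--(3) follow essentially the same approach as the paper.

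For Part (4), the hard case $A\in\mathscr{K}(X)$, $B\in\mathscr{O}(X)$, $A\sqcup B\in\mathscr{K}(X)$ is handled differently. The paper proves the two inequalities separately: $\mu(A)+\mu(B)\le\mu(A\sqcup B)$ comes from approximating $B$ by compacts and invoking (c2); for the reverse, the paper picks $U\supseteq A$ open with $\mu(U)<\mu(A)+\epsilon$, observes that $(B\cup U)\setminus(B\setminus U)=U$ with $B\setminus U$ compact, and applies (c1) once to get $\mu(A\sqcup B)\le\mu(B\cup U)=\mu(U)+\mu(B\setminus U)\le\mu(A)+\mu(B)+\epsilon$. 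Your approach instead derives the single identity $\mu(A)+\mu(B)+\mu(W\setminus(A\sqcup B))=\mu(A\sqcup B)+\mu(W\setminus(A\sqcup B))$ from two applications of (c1) and one of (c3), then cancels when $\mu(W)<\infty$. This forces you to rule out separately the case $\mu(A),\mu(B)<\infty$ with $\mu(A\sqcup B)=\infty$; the argument you give for that (bounding $\mu(W\cup B)\le\mu(W)+\mu(B)$ via (c1) on $A\subseteq W$ and monotonicity) is in fact the same computation the paper uses directly. The paper's structure is slightly more economical because the $\ge$ inequality via (c2) never touches infinities, while your cancellation requires the extra detour; on the other hand, your identity-based approach is more symmetric and makes clear why both (c1) and (c3) are needed.
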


\begin{proof}
Note that  \ref{TM2} implies monotonicity on $ \ox$ and  \ref{TM3} implies monotonicity on $\cx$. \\
(1.) 
For a compact $D \se U \sc V$ write $D = K \sc C$, where $ K \se U, C \se V$. Then by  \ref{usl1'} and  \ref{TM2} 
 $\mu(D)  = \mu(K) + \mu(C) \le \mu(U) + \mu(V)$, 
 so taking supremum over all $D \se U \sc V$ we have $\mu(U \sc V) \le \mu(U) + \mu(V)$. If $K \se U, C \se V$ then
 $\mu(K)  + \mu(C) = \mu(K \sc C) \le \mu(U \sc V)$, so by  \ref{TM2} $ \mu(U) + \mu(V) \le \mu(U \sc V)$. \\
(2.) Suppose $K, C$ are disjoint compact sets, $K \sc C \se W, \ W \in \ox$. $X$ is completely regular, so
choose disjoint open sets $U, V$ such that $ K \se U, C \se V, \, U \sc V \se W$. 
Then using \ref{TM3}, \ref{usl1} and monotonicity on open sets we have:
$ \mu(K) + \mu(C) \le \mu(U) + \mu(V) = \mu(U \sc V) \le \mu(W).$
Taking infimum over all $W$ containing $K \sc C$ we see that $ \mu(K) + \mu(C) \le \mu(K \sc C)$.
Since $\mu(K \sc C) \le \mu(U \sc V) = \mu(U) + \mu(V) $, by  \ref{TM3} we have $ \mu(K \sc C) \le \mu(K) + \mu(C)$. \\
(3.) Follows from previous parts. \\
(4.)  Our proof basically follows the proof of  \cite[Proposition 2.2]{Alf:ReprTh} where the result first appeared for 
compact-finite topological measures. 
By part \ref{coopA} we only need to check \ref{TM1} in the situation when $ A \in \mathscr{K}(X), \ B \in \mathscr{O}(X)$, and $ A \sqcup B$ is either 
compact or open. If  $ A \sqcup B$ is open then using condition \ref{usl2} we get
$ \mu(A \sqcup B) = \mu((A \sqcup B) \setminus A) + \mu(A) = \mu(B) + \mu(A).$
Now suppose $A \sqcup B \in \mathscr{K}(X)$. 
Let $ C \in \mathscr{K}(X), \ C \subseteq B$. Then finite additivity and monotonicity of $\mu$ on $\mathscr{K}(X)$ gives
$ \mu(A)  + \mu(C) = \mu(A \sqcup C) \le  \mu(A \sqcup B).$
By \ref{TM2}, 
$ \mu(A) + \mu(B) \le \mu(A \sqcup B).$
The opposite inequality is obvious if $\mu(A) = \infty$, so let $\mu(A) < \infty$, and  
for $ \epsilon >0$ pick $ U \in \mathscr{O}(X)$ such that 
$A \subseteq U$ and $\mu(U) < \mu(A) + \epsilon$. Then compact set $A \sqcup B$ is contained in 
the open set $B \cup U$. Also, the compact set $ (A \sqcup B) \setminus U = B \setminus U$ is contained in 
$ B \cup U$, and $ (B \cup U) \setminus (B \setminus U)  = U$. Applying \ref{TM2} and then 
\ref{usl2} we have:
$
\mu(A \sqcup B) \le \mu(B \cup U) = \mu((B \cup U) \setminus (B\setminus U)) + \mu(B \setminus U) 
= \mu(U) + \mu(B \setminus U)  \le \mu(U) + \mu(B)  
\le \mu(A)  + \mu(B) + \epsilon.
$
Thus, 
$ \mu(A \sqcup B)  \le \mu(A) + \mu(B) .$ This finishes the proof.
\end{proof}  

\begin{remark} \label{dualwrong}
Of course, any topological measure satisfies  \ref{usl2} of  Proposition \ref{PrFinAddLC}. It is interesting to note that a 
similar condition regarding a bounded open subset of a closed set fails for topological measures, i.e. 
$ \mu(F) = \mu(U) + \mu(F \setminus U) $
where $F$ is closed and $U$ is open bounded, in general is not true, as Example \ref{linetm} below shows. 
\end{remark}

Using complements of sets, we see that when $X$ is compact, the definition of a real-valued topological measure is equivalent to:

\begin{definition}\label{tmC}
A real-valued topological measure on a compact space $X$ is a set function
$\mu: \ox \cup \cx \rightarrow [0, \infty) $ that 
satisfies the following conditions:
\begin{enumerate}[label=(T\arabic*),ref=(T\arabic*)]
\item \label{T1} 
$ \mu(A\sc B)=\mu(A)+\mu(B) \, \mbox{       if       }\,  A, B, A \sc B \in \ox \cup \cx. $ 
\item[(T2)]
$\mu(U)=\sup\{\mu(C):C \in \cx, \ C\se U\} \, \mbox{       for       } \, U \in \ox. $
\end{enumerate} 
\end{definition}

\begin{remark}
By Lemma \ref{propTMLC}, Definition \ref{tmC} is equivalent to the definition of a (real-valued) topological measure given 
in all previous papers that use topological 
measures on compact spaces.
\end{remark}

When $X$ is compact we have a simpler version of Proposition \ref{PrFinAddLC} for real-valued $\mu$:

\begin{proposition} \label{PrFinAddC}
Let $X$ be compact. Consider the following condition:
\begin{enumerate}[label=(k\arabic*),ref=(k\arabic*)]
\item \label{usl2k}
$\mu(X) = \mu(K) + \mu(X \setminus K) $  for any $K \in \mathscr{C}(X).$
\item \label{usl1k'}
$\mu(K \sqcup C ) = \mu(K) + \mu(C) $ for any disjoint closed sets $K,C.$
\item \label{usl1k}
$\mu(U \sqcup V ) = \mu(U) + \mu(V) $ for any disjoint open sets $U,V.s$
\end{enumerate}
Suppose a set function $\mu: \mathscr{O}(X) \cup \mathscr{C}(X) \rightarrow [0,\infty)$ satisfies \ref{TM2} and \ref{TM3} of 
Definition \ref{TMLC}, \ref{usl2k} and one of \ref{usl1k'}, \ref{usl1k}.  Then  \ref{TM1} holds for $\mu$, so $\mu$ is a topological measure.
\end{proposition}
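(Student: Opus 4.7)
The plan is to reduce Proposition \ref{PrFinAddC} to Proposition \ref{PrFinAddLC}, specifically to part (4) of that proposition. Since $X$ is compact, $\mathscr{K}(X) = \mathscr{C}(X)$, so conditions \ref{usl1k'} and \ref{usl1k} here are literally conditions \ref{usl1'} and \ref{usl1} of Proposition \ref{PrFinAddLC}. The proofs of parts (1) and (2) of that proposition rely only on \ref{TM2}, \ref{TM3}, monotonicity, and complete regularity, and so give \ref{usl1k'} $\Longleftrightarrow$ \ref{usl1k} in our setting. Hence I may assume \emph{both} \ref{usl1k'} and \ref{usl1k} hold.

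The main step is to promote \ref{usl2k} from the special case $U = X$ to the full condition \ref{usl2} of Proposition \ref{PrFinAddLC}: namely, $\mu(U) = \mu(K) + \mu(U\setminus K)$ whenever $K \in \mathscr{C}(X)$, $U \in \mathscr{O}(X)$, and $K \subseteq U$. The idea is that $K$ and $X\setminus U$ are disjoint closed sets whose union equals $X\setminus(U\setminus K)$, with $U\setminus K$ open. By \ref{usl1k'},
$$\mu(K) + \mu(X\setminus U) = \mu\bigl(X \setminus (U\setminus K)\bigr),$$
and applying \ref{usl2k} to both $U$ and $U\setminus K$ gives
$$\mu(X) - \mu(U) = \mu(X\setminus U), \qquad \mu(X) - \mu(U\setminus K) = \mu\bigl(X\setminus(U\setminus K)\bigr).$$
Substituting and cancelling the finite quantity $\mu(X\setminus U)$ (finite because $\mu$ is real-valued) yields $\mu(U) = \mu(K) + \mu(U\setminus K)$, which is condition \ref{usl2}.

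With \ref{usl2} now available alongside \ref{usl1k'}, \ref{usl1k}, \ref{TM2}, and \ref{TM3}, I would invoke part (4) of Proposition \ref{PrFinAddLC} verbatim to conclude \ref{TM1}, completing the proof. The main obstacle is essentially bookkeeping with complements in the passage from \ref{usl2k} to \ref{usl2}: \ref{usl2k} only relates a closed set with its open complement, and the content of the step is that \ref{usl1k'} (or \ref{usl1k}) lets one localize this relation to any $K \subseteq U$. Because $X$ is compact and $\mu$ is real-valued, all quantities involved are finite and cancellation is legitimate; no additional regularity beyond what is assumed is needed.
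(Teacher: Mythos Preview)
Your proof is correct. Both your argument and the paper's rest on the same complement trick—combining \ref{usl2k} with additivity on closed (or open) sets—but they organize it differently. The paper verifies \ref{TM1} directly in the mixed case $C \sqcup U = D$ ($C$ closed, $U$ open): since $U$ and $X\setminus D$ are disjoint open sets whose union has complement $C$, one application of \ref{usl1k} and two applications of \ref{usl2k} give $\mu(D)=\mu(C)+\mu(U)$. You instead first upgrade \ref{usl2k} to the full condition \ref{usl2} and then invoke Proposition~\ref{PrFinAddLC}(4) as a black box. Your route is a clean reduction, but it leans on the somewhat longer mixed-case argument already packaged inside Proposition~\ref{PrFinAddLC}(4); the paper's direct computation is self-contained and a bit shorter. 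One small slip: after substituting, the quantity you cancel is $\mu(X)$, not $\mu(X\setminus U)$, though the conclusion is unaffected.
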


\begin{proof}
As in the proof of Proposition \ref{PrFinAddLC}, \ref{usl1k'}  and  \ref{usl1k} are equivalent.  
Then $\mu$ is finitely additive on open sets and on closed sets, and 
we only need to check \ref{TM1}  in the case when 
$C$ is closed, $U$ is open, and $C \sqcup U = D$ is closed or open.  
Condition \ref{usl2k} means that $\mu(X) = \mu(A) + \mu(X \setminus A)$ for any closed or open set $A$.
Suppose  $C \sqcup U = D$ is closed.
Then $X \setminus D$ is open, disjoint from $U$, so by \ref{usl1k} and \ref{usl2k} we have
$ \mu(U \sqcup (X \setminus D) ) = \mu(U) + \mu(X \setminus D) = \mu(U) + \mu(X) - \mu(D).$
The complement of $U \sqcup (X \setminus D)$ is $C$, so by \ref{usl2k}
$\,  \mu(U \sqcup (X \setminus D) ) = \mu(X) - \mu(C).$ 
Then $ \mu(U) + \mu(C) = \mu(D)$.  The case where $C \sqcup U$ is open can be proved similarly.
\end{proof}

Sometimes it is convenient to use the definition of a topological measure given through closed sets. The following result is from \cite[Sect. 2]{Wheeler}.

\begin{lemma} \label{tmCcx}
$\mu $ is a topological measure on $X$ iff $\mu$ is a real-valued, finite, nonnegative 
set function on $\ax$ satisfying the following properties:
\begin{enumerate}
\item
if $ C \se K, \ C,K \in \cx$ then $\mu(C) \le \mu(K).$ 
\item
if $C, K \in \cx$ are disjoint then $\mu(C) + \mu(K) = \mu(C \sc K).$ 
\item
if $C \in \cx$ and $ \eps >0$ then $ \exists K \in \cx$ such that $C \cap K= \O$ and 
$\mu(C) + \mu(K) > \mu(X) - \eps$.
\item
$\mu(U) = \mu(X) - \mu(X \sm U) $ for every  $ U \in \ox.$
\end{enumerate}
\end{lemma}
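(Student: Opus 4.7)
My plan is to prove the two directions separately, using throughout the fact that $X$ is compact so $\cx = \kx$.

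For the forward direction, assume $\mu$ is a topological measure. Monotonicity on $\cx$ (condition 1) and additivity on disjoint closed sets (condition 2) are immediate from Lemma \ref{propTMLC}\ref{l1} and \ref{TM1}. Condition 4 follows by applying \ref{TM1} to the disjoint decomposition $X = U \sqcup (X \setminus U)$ for $U \in \ox$. For condition 3, given $C \in \cx$ and $\eps>0$, apply inner regularity \ref{TM2} to the open set $X \setminus C$ to choose a compact (hence closed) $K \subseteq X \setminus C$ with $\mu(K) > \mu(X \setminus C) - \eps$; then $K \cap C = \emptyset$, and combining with condition 4 (already verified) gives $\mu(K) + \mu(C) > \mu(X) - \eps$.

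For the reverse direction, assume $\mu$ satisfies (1)--(4). I will verify the hypotheses of Proposition \ref{PrFinAddC}, which then delivers \ref{TM1}. Note that (2) is exactly \ref{usl1k'}, and (4) written with $U = X \setminus K$ is exactly \ref{usl2k}. It therefore suffices to establish \ref{TM2} and \ref{TM3}. First I would deduce monotonicity on $\ox$ from (1) and (4): if $U_1 \subseteq U_2$ are open then $X \setminus U_2 \subseteq X \setminus U_1$ are closed, so $\mu(X \setminus U_2) \le \mu(X \setminus U_1)$ by (1), and (4) converts this to $\mu(U_1) \le \mu(U_2)$.

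For \ref{TM3}, given $F \in \cx$ and $\eps > 0$, apply (3) with $C = F$ to obtain closed $K$ with $K \cap F = \emptyset$ and $\mu(F) + \mu(K) > \mu(X) - \eps$. Put $U = X \setminus K \in \ox$; then $F \subseteq U$, and by (4) $\mu(U) = \mu(X) - \mu(K) < \mu(F) + \eps$. The matching lower bound $\mu(F) \le \mu(U)$ whenever $F \subseteq U \in \ox$ follows from monotonicity on $\cx$ applied to $X \setminus U \subseteq X \setminus F$, converted via (4). For \ref{TM2}, given $U \in \ox$ and any $K \in \cx$ with $K \subseteq U$, the sets $K$ and $X \setminus U$ are disjoint closed sets, so by (2) and (1), $\mu(K) + \mu(X \setminus U) = \mu(K \sqcup (X \setminus U)) \le \mu(X)$, which combined with (4) gives $\mu(K) \le \mu(U)$. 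For approximation from below, apply (3) to $X \setminus U$ to find closed $K$ disjoint from $X \setminus U$ (so $K \subseteq U$) with $\mu(X \setminus U) + \mu(K) > \mu(X) - \eps$, i.e.\ $\mu(K) > \mu(U) - \eps$ by (4).

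The argument is essentially bookkeeping rather than containing a genuinely hard step; the only minor subtlety is making sure to establish \ref{TM2} and \ref{TM3} directly from (1)--(4) before invoking Proposition \ref{PrFinAddC}, since that proposition takes \ref{TM2} and \ref{TM3} as hypotheses. The key mechanism throughout is that condition (4) converts every statement about open sets into a statement about their closed complements, at which point conditions (1)--(3) apply.
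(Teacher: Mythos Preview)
The paper does not actually prove this lemma; it merely cites \cite[Sect.~2]{Wheeler}. Your argument supplies a complete proof, and it is essentially correct.

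One small imprecision: in the verification of \ref{TM3}, you write that the lower bound $\mu(F)\le\mu(U)$ ``follows from monotonicity on $\cx$ applied to $X\setminus U\subseteq X\setminus F$, converted via (4).'' But $X\setminus F$ is open, not closed, so condition~(1) does not apply directly. The correct argument is the one you give a few lines later for \ref{TM2}: since $F$ and $X\setminus U$ are disjoint closed sets, (2) and (1) give $\mu(F)+\mu(X\setminus U)=\mu(F\sqcup(X\setminus U))\le\mu(X)$, and then (4) yields $\mu(F)\le\mu(U)$. With this fix, the proof goes through.
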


Thus, we can define a topological measure on $X$ by 
giving a set function on $\cx$ satisfying the first three conditions of Lemma \ref{tmCcx}, 
and then extend it to open sets using the last condition.   

\begin{remark} 
If  $X$ is compact and $ \mu$ is finite, condition \ref{T1}  of Definition \ref{tmC}  
is equivalent, as was noticed in \cite{Grubb:Signed}, to the following three conditions:
\begin{enumerate}[label=(\roman*),ref=(roman*)]
\item
$ \mu(U \sqcup V) = \mu(U) + \mu(V)$ for any two disjoint  open sets $U,V$.
\item
If $X= U \cup V, \ \  U , V \in \mathscr{O}(X)$ then $ \mu(U) + \mu(V) = \mu(X) + \mu(U \cap V)$.
\item
$\mu(X \setminus U) = \mu(X) - \mu(U)$ for any open set $U$.
\end{enumerate}
The same equivalence holds if we replace open sets by closed sets. 
Thus, when $X$ is compact,  a finite topological measure (and more generally, a  bounded signed topological measure) 
can be defined by its actions on open (respectively, on closed) sets. The idea of determining a topological measure on a closed manifold 
by its values on closed submanifolds with boundary is in \cite[Sect. 2]{Zap}. 
\end{remark}

\begin{remark} \label{Vloz}
A measure on $X$ is a countably additive set function on a $\sigma$-algebra of subsets of $X$ with values in $[0, \infty]$.  
A Borel measure on $X$ is a measure on the Borel $\sigma$-algebra on $X$.  
Let $X$ be locally compact, and let $ \mathscr{M}$  be the collection of all Borel measures on $X$ that are inner regular on open sets and outer regular 
on all Borel sets 
(i.e. $m(U) = \sup \{  m(K): K \subseteq U,\,  K  \text{  is compact} \}$ for every open set $U$, and 
$ m(E) = \inf \{ m(U): E \subseteq U, \, U \text{  is open} \} $ for every Borel set $E$). 
Thus, $  \mathscr{M}$ includes regular Borel measures and Radon measures. 
We denote by $M(X)$ the restrictions to $\mathscr{O}(X) \cup \mathscr{C}(X)$ of measures 
from $ \mathscr{M}$, and by $TM(X)$ the set of all topological measures on $X$.
Then
\begin{align*} 
 M(X) \subsetneqq  TM(X).
\end{align*}
The inclusions follow from the definitions. 
When $X$ is compact, there are examples of topological measures that are not measures in numerous papers, 
beginning with \cite{Aarnes:TheFirstPaper}. In Sections \ref{ExamplesC} and \ref{ExamplesLC} 
we give examples of topological measures that are not measures on 
compact and locally compact noncompact spaces.
\end{remark}

The next result tells when a topological measure is a Borel measure (\cite{Butler:DTMLC}).

\begin{theorem} \label{subaddit}
Let $\mu$ be a topological measure (or more generally, a deficient topological measure) on a locally compact space $X$. 
The following are equivalent: 
\begin{itemize}
\item[(a)]
If $C, K$ are compact subsets of $X$, then $\mu(C \cup K ) \le \mu(C) + \mu(K)$.
\item[(b)]
If $U, V$ are open subsets of $X$,  then $\mu(U \cup V) \le \mu(U) + \mu(V)$.
\item[(c)]
$\mu$ admits a unique extension to an inner regular on open sets, outer regular Borel measure 
$m$ on the Borel $\sigma$-algebra of subsets of $X$. 
$m$ is a Radon measure iff $\mu$ is compact-finite. 
If $\mu$ is finite then $m$ is an outer regular and inner closed regular Borel measure.
\end{itemize}
\end{theorem}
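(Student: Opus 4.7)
The plan is to prove the equivalences in the cycle $(c) \Rightarrow (a) \Rightarrow (b) \Rightarrow (c)$, then extract the supplementary claims about the extension being Radon or inner closed regular.

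The implication $(c) \Rightarrow (a)$ is immediate from finite subadditivity of a Borel measure. For $(a) \Rightarrow (b)$, fix $U, V \in \mathscr{O}(X)$; by inner regularity \ref{TM2} it suffices to bound $\mu(K)$ for an arbitrary compact $K \subseteq U \cup V$. Since $X$ is locally compact Hausdorff, the disjoint compact sets $K \setminus U$ and $K \setminus V$ can be separated by disjoint open sets $A \supseteq K \setminus U$ and $B \supseteq K \setminus V$; then $K_1 := K \setminus A$ and $K_2 := K \setminus B$ are compact, contained in $U$ and $V$ respectively, and cover $K$. Hypothesis (a) yields $\mu(K) \le \mu(K_1) + \mu(K_2) \le \mu(U) + \mu(V)$, and the supremum over $K$ gives (b). For $(b) \Rightarrow (a)$, given compact $C, K$ and $\epsilon > 0$, outer regularity \ref{TM3} supplies open $U \supseteq C$, $V \supseteq K$ with $\mu(U) < \mu(C) + \epsilon/2$ and $\mu(V) < \mu(K) + \epsilon/2$, whence $\mu(C \cup K) \le \mu(U \cup V) \le \mu(U) + \mu(V) < \mu(C) + \mu(K) + \epsilon$.

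The heart of the argument is $(b) \Rightarrow (c)$, where I would construct the extension through a Carath\'eodory outer measure. Define $m^*(E) = \inf\{\mu(U): E \subseteq U \in \mathscr{O}(X)\}$ for every $E \subseteq X$. Monotonicity is clear, and the equalities $m^*|_{\mathscr{O}(X)} = \mu|_{\mathscr{O}(X)}$ and $m^*|_{\mathscr{C}(X)} = \mu|_{\mathscr{C}(X)}$ come from \ref{TM3}. Countable subadditivity follows from (b) together with the smoothness property \ref{smooth}: for a sequence $(E_n)$, pick open $U_n \supseteq E_n$ with $\mu(U_n) \le m^*(E_n) + \epsilon 2^{-n}$, and obtain $\mu\bigl(\bigcup_n U_n\bigr) = \lim_N \mu\bigl(\bigcup_{n \le N} U_n\bigr) \le \sum_n \mu(U_n)$ by iterating (b). Thus $m^*$ is an outer measure.

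The main obstacle is verifying that every open set $U$ is Carath\'eodory measurable with respect to $m^*$, since this is the step where all of \ref{TM1}, \ref{TM2}, \ref{TM3} must interact. I would first treat an open test set $E$: given $\epsilon > 0$, use \ref{TM2} to pick compact $K \subseteq E \cap U$ with $\mu(K) > \mu(E \cap U) - \epsilon$. Then $E = K \sqcup (E \setminus K)$ with $E$ open and $E \setminus K$ open, so finite additivity \ref{TM1} gives $\mu(E) = \mu(K) + \mu(E \setminus K)$; since $E \setminus U \subseteq E \setminus K$, this yields $m^*(E \cap U) + m^*(E \setminus U) \le (\mu(K) + \epsilon) + \mu(E \setminus K) = \mu(E) + \epsilon = m^*(E) + \epsilon$. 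An arbitrary test set $E$ is handled by approximating from outside with an open $W \supseteq E$ of mass $\mu(W) < m^*(E) + \epsilon$ and applying the open-test-set case to $W$. Carath\'eodory's theorem then yields a $\sigma$-algebra containing $\mathscr{O}(X)$, hence all of $\mathscr{B}(X)$, on which $m := m^*|_{\mathscr{B}(X)}$ is a Borel measure extending $\mu$; inner regularity of $m$ on open sets is inherited from $\mu$, outer regularity on Borel sets is built into $m^*$, and uniqueness follows because any outer regular Borel extension of $\mu$ must coincide with $m^*$ on every Borel set. Finally, if $\mu$ is compact-finite then $m(K) = \mu(K) < \infty$ for compact $K$, so $m$ is Radon; for finite $\mu$, inner closed regularity is obtained via $m(E) = m(X) - m(X \setminus E)$, converting the outer regularity of $X \setminus E$ by open sets into the inner regularity of $E$ by closed sets.
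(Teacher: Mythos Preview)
The paper does not prove Theorem~\ref{subaddit}; it simply cites \cite{Butler:DTMLC}, so there is no in-paper argument to compare against. Your Carath\'eodory outer-measure construction is the standard route and is correct for topological measures as written.

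One small remark concerning the parenthetical ``or more generally, a deficient topological measure'': in the Carath\'eodory measurability step you invoke the equality $\mu(E)=\mu(K)+\mu(E\setminus K)$ via \ref{TM1}, which is a topological-measure property and need not hold for deficient topological measures. However your argument only \emph{uses} the inequality $\mu(K)+\mu(E\setminus K)\le\mu(E)$, and that does hold for deficient topological measures (approximate $\mu(E\setminus K)$ from inside by compact $D$, use additivity on disjoint compacts to get $\mu(K)+\mu(D)=\mu(K\sqcup D)\le\mu(E)$). So the proof goes through in the deficient case as well once you replace the equals sign by $\le$. You also state only one direction of ``$m$ is Radon iff $\mu$ is compact-finite''; the converse is immediate since local finiteness of $m$ gives $\mu(K)=m(K)<\infty$.
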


\section{Solid-set functions} \label{SSF}

Our goal now is to extend a set function defined on a smaller collection of subsets of $X$ than
$\mathscr{O}(X) \cup \mathscr{C}(X)$ to a topological measure on $X$. One such collection is the collection of solid
bounded open and solid compact sets, and the corresponding set function is a solid-set function.  

\begin{definition} \label{DeSSFLC}
A function $ \lambda: \mathscr{A}_{s}^{*}(X) \rightarrow [0, \infty) $ is a solid-set function on $X$ if
\begin{enumerate}[label=(s\arabic*),ref=(s\arabic*)]
\item \label{superadd}
$ \sum_{i=1}^n \lambda(C_i) \le \lambda(C)$ whenever $\bigsqcup_{i=1}^n C_i \subseteq C,  \ \  C, C_i \in \mathscr{K}_{s}(X)$; 
\item \label{regul}
$ \lambda(U) = \sup \{ \lambda(K): \ K \subseteq U , \ K \in \mathscr{K}_{s}(X) \}$ for $U \in \mathscr{O}_{s}^{*}(X)$; 
\item \label{regulo}
$ \lambda(K) = \inf \{ \lambda(U) : \  K \subseteq U, \ U \in \mathscr{O}_{s}^{*}(X) \}$ for $ K  \in \mathscr{K}_{s}(X)$; 
\item  \label{solidparti}
$ \lambda(A) = \sum_{i=1}^n \lambda (A_i)$ whenever $A = \bigsqcup_{i=1}^n A_i, \ \ A , A_i  \in \mathscr{A}_{s}^{*}(X)$.
\end{enumerate}
\end{definition}

\begin{lemma} \label{PropSsfLC}
Let $X$ be locally compact, connected,  locally connected. Suppose $\lambda$ is a solid-set function on $X$. Then
\begin{itemize}
\item[(i)]
$\lambda(\emptyset) = 0.$ 
\item[(ii)]
$\lambda$ is a superadditive set function, i.e.  
if $ \bigsqcup_{t \in T} A_t \subseteq A, $  where $A_t, A \in \mathscr{A}_{s}^{*}(X)$, then $\sum_{t \in T } \lambda(A_t)\le  \lambda(A).$
\end{itemize}
\end{lemma}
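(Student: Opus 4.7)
The plan is to prove (i) quickly from a trivial application of one of the solid-set axioms, and then prove (ii) by a three-step reduction: first to a finite index set $T$, then to the case where every $A_i$ is compact solid (using inner regularity), and finally engulfing the resulting disjoint union of compact solids in a single compact solid subset of $A$ furnished by Lemma \ref{LeCsInside}, so that (s1) applies.

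For (i), note that $\emptyset \in \mathscr{A}_s^*(X)$: it is trivially connected, its complement $X$ is a single unbounded component in the noncompact case and connected in the compact case, and it is clearly bounded. Applying (s1) with $n=2$, $C_1 = C_2 = \emptyset$, and $C = \emptyset$, we get $2\lambda(\emptyset) \le \lambda(\emptyset)$, which combined with $\lambda(\emptyset) \ge 0$ forces $\lambda(\emptyset) = 0$.

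For (ii), since every $\lambda(A_t)$ is a non-negative real, $\sum_{t \in T}\lambda(A_t) = \sup\{\sum_{t \in F}\lambda(A_t) : F \subseteq T \text{ finite}\}$, so it suffices to treat $T = \{1,\dots,n\}$. Fix $\varepsilon > 0$. For each $A_i \in \mathscr{O}_s^*(X)$, axiom (s2) yields $K_i \in \mathscr{K}_s(X)$ with $K_i \subseteq A_i$ and $\lambda(A_i) < \lambda(K_i) + \varepsilon/n$; for each $A_i \in \mathscr{K}_s(X)$ set $K_i := A_i$. Then $K_1,\dots,K_n$ are pairwise disjoint compact solid subsets of $A$. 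If $A \in \mathscr{K}_s(X)$, take $C := A$; if $A \in \mathscr{O}_s^*(X)$, apply Lemma \ref{LeCsInside} (part (1) when $X$ is noncompact, part (3) when $X$ is compact) to the compact set $\bigsqcup_{i=1}^n K_i \subseteq A$ to obtain $C \in \mathscr{K}_s(X)$ with $\bigsqcup K_i \subseteq C \subseteq A$. Then (s1) yields $\sum_{i=1}^n \lambda(K_i) \le \lambda(C)$, while (s2) gives $\lambda(C) \le \lambda(A)$ because $C$ is one of the compact solid subsets of $A$ appearing in the supremum. Adding up the $\varepsilon/n$ error terms and letting $\varepsilon \downarrow 0$ gives $\sum_{i=1}^n \lambda(A_i) \le \lambda(A)$.

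The only real obstacle is the engulfment step: a finite disjoint union of compact solid sets is generally not solid, so axiom (s1) cannot be applied with $C = \bigsqcup K_i$ and the required $A$ on the right. The structural result in Lemma \ref{LeCsInside}, exploiting local compactness, connectedness, and local connectedness of $X$, is exactly what lets us insert a compact solid intermediate set between an arbitrary compact subset and any bounded open solid set containing it, which reduces the problem to the pure compact-solid case covered by (s1). The remaining passages are routine estimates with inner regularity and monotonicity (the latter being immediate from (s2) and (s3)).
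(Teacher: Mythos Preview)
Your proof is correct and follows essentially the same approach as the paper's: reduce to finite $T$, replace each $A_t$ by a compact solid set via \ref{regul}, then engulf the resulting disjoint union in a single compact solid $C\subseteq A$ using Lemma \ref{LeCsInside} so that \ref{superadd} applies. The paper is terser (it just says ``by \ref{regul} we may take all $A_t$ to be compact solid'' rather than writing out the $\varepsilon/n$ argument), but the logic is identical.
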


\begin{proof} 
From Definition \ref{DeSSFLC} we see that $\lambda(\emptyset) = 0$.
Now let  $ \bigsqcup_{t \in T} A_t \subseteq A, $  where $A_t, A \in \mathscr{A}_{s}^{*}(X)$.  
Since  $\sum_{t \in T } \lambda(A_t) = \sup \{ \sum_{t \in T'} \lambda(A_t) : \ T' \subseteq T,  \ T' \mbox{  is finite } \}$, 
it is enough to assume that $T$ is finite. By \ref{regul}  in Definition \ref{DeSSFLC} we may take all sets $A_t$ to be disjoint compact solid.
If also $A \in \mathscr{K}_{s}(X)$,  the assertion is just part \ref{superadd} of Definition \ref{DeSSFLC}. 
If $A \in \mathscr{O}_{s}^{*}(X)$ then  by Lemma \ref{LeCsInside} choose $ C \in \mathscr{K}_{s}(X)$ such that 
$\bigsqcup_{t \in T } A_t \subseteq C \subseteq A$. 
Now the assertion follows from parts  \ref{superadd} and \ref{regul}  of Definition \ref{DeSSFLC}.
\end{proof}


\section{Extension to $\mathscr{A}_{ss}^{*}(X) \cup \mathscr{K}_{c}(X)$} \label{ExtBssKc}

Now we assume that $X$ is locally compact, connected, locally connected.

\noindent
We start with a solid-set function $ \lambda: \mathscr{A}_{s}^{*}(X) \rightarrow [0, \infty)$ 
on a locally compact, connected, locally connected space $X$.
Our goal is to extend
$\lambda$ to a topological measure on $X$. We shall do this in steps, each time extending 
the current set function to a new set function defined on a larger collection of sets.

\begin{definition}  \label{la1LC}
For $A \in\mathscr{A}_{ss}^{*}(X) \cup \mathscr{K}_{c}(X)$ define 
$$  \lambda_1(A) = \lambda(\widetilde{A}) - \sum_{i \in I} \lambda(B_i)$$
where $\{ B_i : \  i \in I\} $ is the family of bounded components of $X \setminus A$.
In particular,  when $X$ is compact
\begin{align} \label{Xcomass}
  \lambda_1(A) = \lambda(X) - \sum_{i \in I} \lambda(B_i).
\end{align}
\end{definition}

\begin{remark} \label{subtmysl}
By Lemma \ref{SolidCompoLC} each $B_i \in \mathscr{A}_{s}^{*}(X)$.
If  $A \in\mathscr{A}_{ss}^{*}(X) \cup \mathscr{K}_{c}(X)$ 
then $\bigsqcup_{i \in I} B_i \subseteq \widetilde A$ and  by 
Lemma \ref{PropSsfLC} 
$\sum_{i \in I} \lambda(B_i) \le \lambda(\widetilde A).$
If  $A \in\mathscr{A}_{ss}^{*}(X)$ then in Definition \ref{la1LC} we subtract finitely many terms. If $ A \in \mathscr{K}_{c}(X)$ we use 
additivity on open solid sets (part \ref{smooth} in Lemma \ref{propTMLC}).
\end{remark} 

\begin{lemma} \label{Prla1LC}
The set function $\lambda_1: \mathscr{A}_{ss}^{*}(X) \cup \mathscr{K}_{c}(X) \rightarrow [0, \infty) $ defined in
Definition \ref{la1LC} satisfies the following properties:
\begin{enumerate}[label=(\roman*),ref=(\roman*)]
\item \label{pa1}
$\lambda_1$ is real-valued and $\lambda_1 = \lambda$ on  $\mathscr{A}_{s}^{*}(X).$
\item \label{pa2}
Suppose $\bigsqcup_{i=1}^n A_i  \sqcup \bigsqcup_{s \in S} B_s  \subseteq  A$, where 
$A, A_i \in \mathscr{A}_{ss}^{*}(X) \cup \mathscr{K}_{c}(X)$ and $B_s \in \mathscr{A}_{s}^{*}(X)$. Then 
$$ \sum_{i=1}^n \lambda_1(A_i) + \sum_{s \in S} \lambda_1(B_s) \le \lambda_1(A).$$
In particular, if $\bigsqcup_{i=1}^n C_i \subseteq C$ where $C_i, C \in \mathscr{K}_{c}(X)$ then 
$ \sum_{i=1}^n \lambda_1(C_i) \le \lambda_1(C)$
and  if $A \subseteq B, \ A,B \in \mathscr{A}_{ss}^{*}(X) \cup \mathscr{K}_{c}(X) $ then 
$\lambda_1(A) \le \lambda_1(B).$
\item \label{pa3}
Suppose that $\bigsqcup_{i=1}^n A_i  \sqcup \bigsqcup_{s \in S} B_s =  A$, where 
$A, A_i \in \mathscr{A}_{ss}^{*}(X) \cup \mathscr{K}_{c}(X)$ and $B_s \in \mathscr{A}_{s}^{*}(X)$
with at most finitely many of $B_s \in \mathscr{K}_{s}(X)$.
Then 
$ \sum_{i=1}^n \lambda_1(A_i) + \sum_{s \in S} \lambda_1(B_s) = \lambda_1(A).$
\end{enumerate}
\end{lemma}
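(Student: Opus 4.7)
The plan is to handle part (i) directly from the definitions and superadditivity of $\lambda$, then to address (ii) and (iii) together by expanding both sides in terms of $\lambda$-values on solid sets and regrouping via the solid-hull partial order of Remark \ref{ordering}.

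For part (i), real-valuedness is immediate: the bounded components $\{B_i\}_{i \in I}$ of $X \setminus A$ form a disjoint family of solid bounded sets inside $\widetilde{A}$, so by Lemma \ref{PropSsfLC} we have $\sum_i \lambda(B_i) \le \lambda(\widetilde{A})$, whence $\lambda_1(A) \ge 0$. To see $\lambda_1 = \lambda$ on $\mathscr{A}_s^*(X)$: in the noncompact case, a solid $A$ satisfies $\widetilde{A} = A$ and $X \setminus A$ has no bounded components, so the sum in Definition \ref{la1LC} is empty and $\lambda_1(A) = \lambda(A)$; in the compact case $\widetilde{A} = X$ and $X \setminus A$ is a single (connected, hence solid) component, so applying axiom \ref{solidparti} to the solid partition $X = A \sqcup (X \setminus A)$ gives $\lambda_1(A) = \lambda(X) - \lambda(X \setminus A) = \lambda(A)$.

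For parts (ii) and (iii), I would relabel the $A_i$'s via Remark \ref{ordering} so that $A_1, \ldots, A_p$ are the maximal elements, whose solid hulls $\widetilde{A_1}, \ldots, \widetilde{A_p}$ are pairwise disjoint by Lemma \ref{PrSolidHuLC}\ref{part5}. Each $\widetilde{A_k}$ is contained in $\widetilde{A}$, since any bounded component of $X \setminus A_k$, being connected and bounded, cannot meet an unbounded component of $X \setminus \widetilde{A}$. Every non-maximal $A_i$ lies in a unique bounded component of $X \setminus A_k$; every $B_s$, being connected and disjoint from all $A_j$'s, lies either in one such bounded component or in $\widetilde{A} \setminus \bigsqcup_k \widetilde{A_k}$; the bounded components $D_\ell$ of $X \setminus A$ admit the same dichotomy. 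Expanding both sides of the desired inequality via Definition \ref{la1LC} and carrying out this bookkeeping, (ii) reduces to a single superadditivity inequality for $\lambda$ on a disjoint family of bounded solid subsets of $\widetilde{A}$.

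For (iii), the equality hypothesis $\bigsqcup_i A_i \sqcup \bigsqcup_s B_s = A$ together with the finiteness of compact $B_s$'s lets us invoke Lemma \ref{finiteSP} to express each ``interior'' grouping (the contents of a given $\widetilde{A_k}$, or the complement of $\bigsqcup_k \widetilde{A_k}$ inside $\widetilde{A}$) as a finite disjoint solid decomposition; then axiom \ref{solidparti} applies in place of bare superadditivity throughout, upgrading every $\le$ to $=$. The main obstacle will be the combinatorial bookkeeping: one must verify that the interior components $B_{i,j}$ of $X \setminus A_i$, which appear with a minus sign in $\sum_i \lambda_1(A_i)$, cancel exactly against the $\lambda$-contributions of those $A_j, B_s, D_\ell$ lying inside them, and that the ``outside'' pieces together with $\widetilde{A_1}, \ldots, \widetilde{A_p}$ form a genuine solid partition of $\widetilde{A}$. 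I expect this will require an induction on the depth of nesting in the partial order of Remark \ref{ordering}, with the base case being (ii) applied to a family in which no $A_i$ lies in the solid hull of another.
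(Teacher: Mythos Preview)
Your approach is essentially the paper's: the same reduction to $A$ solid, the same partial order on solid hulls from Remark \ref{ordering}, and an induction that peels off maximal elements and recurses into their bounded components. Two points need sharpening.

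First, the claim that after bookkeeping ``(ii) reduces to a single superadditivity inequality for $\lambda$'' is not right as a direct statement. Expanding $\lambda_1(A_i) = \lambda(\widetilde{A_i}) - \sum_j \lambda(B_{i,j})$ puts the $\lambda(B_{i,j})$'s on the wrong side, and the non-maximal $\widetilde{A_i}$'s are not disjoint from the maximal ones, so no single application of Lemma \ref{PropSsfLC} suffices. You correctly diagnose this in the last paragraph, but the paper's induction parameter is cleaner than your ``depth of nesting'': it inducts on $n$, the number of sets $A_i$, with base case $n=0$ being bare superadditivity of $\lambda$. In the inductive step, each bounded component $E_\alpha$ of $X \setminus A_k$ (for $A_k$ maximal) is solid by Lemma \ref{SolidCompoLC} and contains strictly fewer than $n$ of the $A_i$'s, so the hypothesis applies inside $E_\alpha$; summing over $\alpha$ gives inequality (\ref{Aktilde}), and then one finishes with a single superadditivity on the disjoint family $\{\widetilde{A_k}\}_{k=1}^p \cup \{B_s\}_{s \in S'}$. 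Your base case (all $A_i$ mutually maximal) is not quite trivial and itself needs a two-layer argument.

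Second, and more substantive: the entire machinery of Remark \ref{ordering} and Lemma \ref{PrSolidHuLC} is stated only for noncompact $X$. When $X$ is compact, every solid hull is $X$ (Remark \ref{leSolidHu}), so the partial order collapses and ``maximal elements with disjoint hulls'' gives you nothing. The paper treats the compact case by a parallel but different induction: it singles out $A_1$, decomposes $X \setminus A_1 = \bigsqcup_{j} E_j$ into components (each solid), and applies the inductive hypothesis to the configuration $(X \setminus E_j) \sqcup \bigsqcup_{i \in I_j} A_i \sqcup \bigsqcup_{s \in S_j} B_s \subseteq X$, which has at most $n-1$ non-solid sets since $X \setminus E_j$ is solid. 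You should add this case explicitly. Part (iii) then follows in both cases exactly as you say, by tracking that the relevant inclusions become equalities and invoking Lemma \ref{finiteSP} and \ref{solidparti} where (ii) used superadditivity.
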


\begin{proof}
(i)
Easy to see from  Lemma \ref{PrSolidHuLC}, Lemma \ref{PropSsfLC}, and Remark \ref{subtmysl}. \\
(ii)
First we assume that $X$ is noncompact. 
Suppose that $\bigsqcup_{i=1}^n A_i  \sqcup \bigsqcup_{s \in S} B_s  \subseteq  A$, where 
$A, A_i \in \mathscr{A}_{ss}^{*}(X) \cup \mathscr{K}_{c}(X)$ and $B_s \in \mathscr{A}_{s}^{*}(X)$.
We may assume that $A \in \mathscr{A}_{s}^{*}(X)$, since the inequality
\begin{eqnarray} \label{number1}
\sum_{i=1}^n \lambda_1(A_i) + \sum_{s \in S} \lambda_1(B_s) \le \lambda_1(A)
\end{eqnarray}
is equivalent to 
\begin{eqnarray} \label{number2} 
\sum_{i=1}^n \lambda_1(A_i) + \sum_{s \in S} \lambda_1(B_s) + 
\sum_{t \in T} \lambda_1 (D_t) \le \lambda_1(\widetilde{A}),
\end{eqnarray}
where $\{D_t: \  t \in T \}$ is the disjoint family of bounded components of $X \setminus A$, and
by Lemma \ref{SolidCompoLC} each $D_t \in \mathscr{A}_{s}^{*}(X)$.

The proof is by induction on $n$. 
For $n=0$ the statement is Lemma \ref{PropSsfLC}. 
Suppose now $n \ge 1$ and assume the result is true for any disjoint collection 
(contained in a bounded solid set)  of bounded semisolid or compact connected sets among which 
there are less than $n$ non-solid sets. Assume now that we have $n$ disjoint sets 
$A_1, \ldots, A_n$ from the collection
$\mathscr{A}_{ss}^{*}(X) \cup \mathscr{K}_{c}(X)$.
Consider a partial order on 
$\{ A_1, A_2, \ldots, A_n \}$ where $A_i \le A_j$ iff  $\widetilde{A_i} \subseteq \widetilde{A_j}$. 
(See Lemma \ref{PrSolidHuLC}.)
Let $ A_1, \ldots, A_p$ where $ p \le n$ be maximal elements in  $\{ A_1, A_2, \ldots A_n \}$
with respect to this partial order.
For a maximal element $A_k,  k \in \{ 1, \ldots, p\}$ 
define the following index sets:
$$ I_k = \{ i \in \{ p+1, \ldots, n\}: \,  A_i \mbox{ is contained in a bounded component of }  X\setminus A_k \},  $$
$$ S_k = \{ s \in S : \ B_s \mbox{ is contained in a bounded component of }  X\setminus A_k \}.  $$

Let $\{ E_{\alpha} \}_{\alpha \in H} $ be the disjoint family of bounded components 
of $X \setminus A_k$. Then  
$ I_k = \bigsqcup_{\alpha \in H} I_{k,\alpha}$, $ S_k = \bigsqcup_{\alpha \in H} S_{k,\alpha}  $
where 
$ I_{k, \alpha} = \{ i \in \{ p+1, \ldots, n\} :  \ A_i \subseteq E_\alpha \}$, $S_{k, \alpha} = \{ s \in S : \ B_s \subseteq E_\alpha \}. $
The set $I_k$ and each set $I_{k, \alpha}$ has cardinality $< n$. 
The set $E_\alpha$ is solid by Lemma \ref{SolidCompoLC}, and
\begin{eqnarray} \label{zv1}
\bigsqcup_{i \in I_{k, \alpha}} A_i \sqcup \bigsqcup_{s \in S_{k, \alpha}} B_s \subseteq E_\alpha. 
\end{eqnarray}
By induction hypothesis
$ \sum_{i \in I_{k, \alpha}} \lambda_1(A_i) +
\sum_{s \in S_{k, \alpha}} \lambda_1(B_s) \le \lambda_1(E_\alpha). $
It follows that 
\begin{align*}
 \sum_{i \in I_k} \lambda_1(A_i) + \sum_{s \in S_k} \lambda_1(B_s)
&= \sum_{\alpha \in H}\left( \sum_{i \in I_{k, \alpha}} \lambda_1(A_i) +
\sum_{s \in S_{k, \alpha}} \lambda_1(B_s) \right)  
\le \sum_{\alpha \in H} \lambda_1 (E_\alpha).
\end{align*}
Then using part \ref{pa1} and Definition \ref{la1LC} we have:
\begin{align} \label{Aktilde}
\lambda_1 (A_k) + \sum_{i \in I_k} \lambda_1(A_i) + \sum_{s \in S_k} \lambda_1(B_s) 
\le \lambda_1 (A_k) +\sum_{\alpha \in H} \lambda_1 (E_\alpha) =  \lambda_1(\widetilde{A_k}).
\end{align}

From Remark \ref{ordering} $ \{1, \ldots, n\} = \{1, \ldots, p\} \sqcup \bigsqcup_{k=1}^p I_k $. 
Similarly, the sets $S_k, \ k=1, \ldots, p$ are also disjoint.
Consider the index set $ S' = S \setminus \bigsqcup_{k=1}^p S_k.$
Since 
$ \{\widetilde{A_k}\}_{k=1}^p \bigsqcup \{B_s\}_{ s \in S'}$ is a collection of disjoint solid sets contained in the solid set $A$,
applying  (\ref{Aktilde}) and  Lemma \ref{PropSsfLC} we have:
\begin{eqnarray}
\sum_{i=1}^n \lambda_1(A_i)  &+& \sum_{s \in S} \lambda(B_s)  
= \sum_{k=1}^p \left( \lambda_1(A_k) + \sum_{i \in I_k} \lambda_1(A_i)  + 
 \sum_{s \in S_k} \lambda(B_s) \right) + \sum_{s \in S'} \lambda(B_s)  \nonumber \\
&\le& \sum_{k=1}^p \lambda(\widetilde{A_k}) + \sum_{s \in S'} \lambda(B_s) \le
 \lambda(A).
 \label{eqin1}
\end{eqnarray}
  
Now suppose $X$ is compact. 
As in (\ref{number1}) and (\ref{number2}), the inequality
$\sum_{i=1}^n \la_1(A_i) + \sum_{s \in S} \la_1(B_s) \le \la_1(A) $
is equivalent to 
$ \sum_{i=1}^n \la_1(A_i) + \sum_{s \in S} \la_1(B_s) + \sum_{t \in T} \la_1 (D_t) \le \la_1(X)$, 
where $\{D_t: \  t \in T \}$ is the disjoint family of components of $X \setminus A$, and each $D_t$ is a solid set. 
We need to show that 
$$\sum_{i=1}^n \la_1(A_i) + \sum_{s \in S} \la_1(B_s) \le \la_1(X) \ \mbox{  if   } \ \bigsqcup_{i=1}^n A_i  \sqcup \bigsqcup_{s \in S} B_s  \subseteq  X,$$
where each $B_s$ is solid and each $A_i \in \assx \cup \ccx$. 

The proof is by induction on $n$. For $n=0$ use Lemma \ref{PropSsfLC}.
Suppose  now $n\ge 1$ and  assume the result is true for any disjoint family from 
$\assx \cup \ccx \cup \asx$ that contains less than $n$ non-solid sets. 
Let 
$ X \sm A_1 = \bsc_{j \in J} E_j $
be the decomposition into components;  each $E_j$ is a solid set by Lemma \ref{SolidCompoLC}. 
Each $A_i, \ i\ge 2$ and each $B_s$ is contained in a component of $X \sm A_1$, so  
for each $j \in J$ define index sets
$ I_j = \{ i \in \{2, \ldots, n\}: \ A_i \se E_j  \}$, $ S_j = \{ s \in S: B_s \se E_j \}.$
Since $X \sm E_j$ is solid, $ | I_j| \le n-1$ and 
\begin{align} \label{vs1}
(X \sm E_j) \sc \bsc_{i \in I_j} A_i \sc \bsc_{s \in S_j} B_s  \se X 
\end{align}
by induction hypothesis we have
$  \la_1(X \sm E_j) + \sum_{ i \in I_j}  \la_1 (A_i) + \sum_{s \in S_j} \la_1 (B_s) \le \la_1(X)$, 
i.e. by Definition \ref{la1LC}
$\sum_{ i \in I_j}  \la_1 (A_i) + \sum_{s \in S_j} \la_1 (B_s) \le \la_1(E_j).$
Then we have:
\begin{eqnarray*}
\sum_{i=1}^n \la_1(A_i) &+& \sum_{s \in S} \la_1(B_s) 
= \la_1(A_1) + 
\sum_{j \in J} \left( \sum_{ i \in I_j}  \la_1 (A_i) + \sum_{s \in S_j} \la_1 (B_s) \right) \\
&\le& \la_1(A_1) + \sum_{j \in J} \la_1(E_j) = \la_1(X),
\end{eqnarray*} 
where the last equality is by definition of $\la_1$. \\
(iii)
The proof is almost identical to the proof of the previous part, and we keep the same notations. 
When $X$ is noncompact, we may assume again that  $A \in \mathscr{A}_{s}^{*}(X)$, 
since the inequalities (\ref{number1}) and (\ref{number2}) become equalities.
The proof is by induction on $n$, and the case $n=0$ is given by  
Lemma \ref{finiteSP} and part \ref{solidparti} of 
Definition \ref{DeSSFLC}. The inequalities in the induction step become equalities
once one observes that
$\bigsqcup_{k=1}^p \widetilde{A_k} \sqcup \bigsqcup_{s \in S'} B_s = A$ (see Remark \ref{ordering})
and that (\ref{zv1}) above becomes
$\bigsqcup_{i \in I_{k, \alpha}} A_i \sqcup \bigsqcup_{s \in S_{k , \alpha}} B_s =E_{\alpha}$.
The last inequality in (\ref{eqin1}) becomes an equality  by Lemma \ref{finiteSP}  and  part \ref{solidparti} of Definition \ref{DeSSFLC}. 
When $X$ is compact, the inclusion in (\ref{vs1}) becomes 
the equality, and then all subsequent inequalities in the proof of the previous part become equalities as well. 
\end{proof} 

\section{Extension to $\mathscr{K}_{0}(X)$} \label{BCOX}

Our goal now is to extend the set function $\lambda_1$ to a set function $\lambda_2$ 
defined on $\mathscr{K}_{0}(X).$  Recall that $ K \in \mathscr{K}_{0}(X)$ if $ K = \bigsqcup_{i=1}^n K_i$ where $ n \in \mathbb{N}$ and  
$K_i \in \mathscr{K}_{c}(X)$ for $ i=1, \ldots, n.$

\begin{definition}  \label{la2LC}
For $K = \bigsqcup_{i=1}^n K_i,$ where $ K_i \in \mathscr{K}_{c}(X)$, let 
$ \lambda_2(K ) = \sum_{i=1}^n \lambda_1(K_i). $ 
\end{definition}

\begin{lemma} \label{OssKo}
Suppose $C =C_1 \sc \ldots \sc C_n  \se U, C_i \in \ksx, U \in \bosx$. Given $\eps >0$, there exists $ B \se U \sm C,  \, B \in \kox$  such that
$ \la (U) - \la_2(C) - \la_2(B) < \eps$.
\end{lemma}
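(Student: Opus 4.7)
The plan is to induct on $n$, the number of compact solid pieces of $C$. For the base case $n=0$, condition \ref{regul} of Definition \ref{DeSSFLC} provides $K \in \mathscr{K}_s(X)$ with $K \subseteq U$ and $\lambda(U) - \lambda(K) < \epsilon$; since $\mathscr{K}_s(X) \subseteq \mathscr{K}_0(X)$ and $\lambda_2(K) = \lambda(K)$, the choice $B = K$ suffices. The inductive step splits into two cases determined by whether $U \setminus C$ is disconnected or connected.

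In the disconnected case, I will apply Lemma \ref{ossdec} to obtain a decomposition $U = \bigsqcup_{i \in I} C_i \sqcup \bigsqcup_{j=1}^p V_j$ with $\emptyset \neq I \subseteq \{1, \ldots, n\}$ and $V_j \in \mathscr{O}_s^*(X)$. Condition \ref{solidparti} then gives $\lambda(U) = \sum_{i \in I} \lambda(C_i) + \sum_j \lambda(V_j)$. Letting $I_j = \{k \notin I : C_k \subseteq V_j\}$, each $|I_j| \le n-1$ since $|I| \ge 1$, so the induction hypothesis applied inside each $V_j$ at threshold $\epsilon/p$ yields $B_j \in \mathscr{K}_0(X)$ with $B_j \subseteq V_j \setminus \bigsqcup_{k \in I_j} C_k$ and $\lambda(V_j) - \sum_{k \in I_j} \lambda(C_k) - \lambda_2(B_j) < \epsilon/p$; taking $B = \bigsqcup_j B_j \in \mathscr{K}_0(X)$ and summing produces $\lambda(U) - \lambda_2(C) - \lambda_2(B) < \epsilon$.

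In the connected case, $W := U \setminus C \in \mathscr{O}_{ss}^*(X)$ has bounded components of $X \setminus W$ equal exactly to $C_1, \ldots, C_n$, so Lemma \ref{Prla1LC}(\ref{pa3}) yields $\lambda(U) - \lambda_2(C) = \lambda_1(W)$ and reduces the task to producing $B \in \mathscr{K}_0(X)$ in $W$ with $\lambda_1(W) - \lambda_2(B) < \epsilon$. My construction is: use \ref{regulo} together with Hausdorff separation to thicken each $C_i$ to $V_i \in \mathscr{O}_s^*(X)$ with $C_i \subseteq V_i$, closures $\overline{V_i}$ pairwise disjoint and contained in $U$, and $\lambda(V_i) - \lambda(C_i) < \epsilon/(3n)$; then, combining \ref{regul} with Lemmas \ref{LeConLC} and \ref{PrSolidHuLC} (approximate $\lambda(U)$ from inside by a compact solid, enlarge to a compact connected superset of $\bigsqcup_i \overline{V_i}$, and pass to the solid hull inside $U$), produce $K \in \mathscr{K}_s(X)$ with $\bigsqcup_i \overline{V_i} \subseteq K \subseteq U$ and $\lambda(K) > \lambda(U) - \epsilon/3$; finally set $B := K \setminus \bigsqcup_i V_i$, which is compact with $B \subseteq W$. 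Once the membership $B \in \mathscr{K}_0(X)$ is in hand, the partition $K = B \sqcup \bigsqcup_i V_i$ falls under Lemma \ref{Prla1LC}(\ref{pa3}), giving $\lambda(K) = \lambda_2(B) + \sum_i \lambda(V_i)$, whence
\begin{align*}
\lambda(U) - \lambda_2(C) - \lambda_2(B) = [\lambda(U) - \lambda(K)] + \sum_i [\lambda(V_i) - \lambda(C_i)] < \epsilon/3 + n \cdot \epsilon/(3n) < \epsilon.
\end{align*}

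The hard part will be the membership $B \in \mathscr{K}_0(X)$ in the connected case --- showing that $K \setminus \bigsqcup_i V_i$ has only finitely many compact connected components. This rests on a topological property of solid sets: removing finitely many disjoint open solid sets with closures inside a compact connected set leaves at most finitely many components, provable by an iterative clopen argument that exploits the boundary structure of each $V_i$. Tightening this step (or, alternatively, bypassing the finiteness question by a more aggressive use of Lemma \ref{ossdec} after replacing the $C_i$ by slightly enlarged compact solid approximants chosen so as to force a disconnection of the complement) is where the real work lies; the rest of the proof is bookkeeping with Lemmas \ref{ossdec} and \ref{Prla1LC}.
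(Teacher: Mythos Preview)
Your disconnected case is correct and coincides with the paper's: apply Lemma~\ref{ossdec}, use condition~\ref{solidparti} on the resulting solid decomposition, and induct inside each $V_j$.

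The connected case has a genuine gap. Your candidate $B = K \setminus \bigsqcup_i V_i$ is compact, but nothing in the paper guarantees it lies in $\mathscr{K}_0(X)$. The ``topological property'' you invoke --- that removing finitely many disjoint bounded open solid sets from a compact solid set leaves only finitely many components --- is neither proved nor obviously true in a general locally compact, connected, locally connected space; your ``iterative clopen argument'' is just a phrase. Without finiteness of the components of $B$ you cannot apply Lemma~\ref{Prla1LC}\ref{pa3} (which admits only finitely many non-solid pieces), so the identity $\lambda(K) = \lambda_2(B) + \sum_i \lambda(V_i)$ is unjustified, and the whole estimate collapses. Your suggested fallback (``force a disconnection'') is also not a strategy: there is no mechanism to guarantee that enlarging the $C_i$ disconnects the complement.

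The paper's resolution of the connected case is different and avoids the finiteness question entirely. Using Lemma~\ref{LeCsInside} it approximates $U$ from inside by $W \in \mathscr{O}_s^*(X)$ with $\overline{W} \subseteq U$, and each $C_i$ from outside by $F_i \in \mathscr{K}_s(X)$ with $C_i \subseteq O_i \subseteq F_i$ for some open $O_i$, keeping the total error below $\epsilon$. If $W \setminus \bigsqcup_i F_i$ happens to be disconnected, one reverts to the disconnected case for $W$ and the $F_i$. If it is connected, one sets $B = \overline{W \setminus \bigsqcup_i F_i}$: this is a \emph{single} compact connected set, hence automatically in $\mathscr{K}_c(X) \subseteq \mathscr{K}_0(X)$, it lies in $\overline{W} \setminus \bigsqcup_i O_i \subseteq U \setminus C$, and since $W \setminus \bigsqcup_i F_i \in \mathscr{O}_{ss}^*(X)$ with $\lambda_1(W \setminus \bigsqcup_i F_i) = \lambda(W) - \sum_i \lambda(F_i)$, monotonicity of $\lambda_1$ (Lemma~\ref{Prla1LC}\ref{pa2}) gives the desired bound. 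The key idea you are missing is to exploit the connectedness of the complement directly, by taking a closure, rather than to carve $B$ out of a compact solid set and then argue about its components.
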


\begin{proof}
Let $ \eps >0$.The proof is by induction on $n$.
Let $n=1$, so $C = C_1$. Suppose $U \sm C$ is disconnected. By Lemma \ref{LeDecompV}  $ U = C \sc \bsc_{i=1}^m W_i$, where $W_i \in \bosx$. 
Pick $K_i \in \ksx$  such that 
$ \sum_{i=1}^m (\la(W_i) - \la(K_i)) < \eps$ and let 
$B = K_1 \sc \ldots \sc K_n$. 
Then 
$  \la(U) - \la(C) - \la_2(B) =  \sum_{i=1}^m (\la(W_i) - \la(K_i)) < \eps.$
Now suppose $U \sm C$ is connected. 
Using complete regularity of $X$ and Lemma  \ref{LeCsInside} for $\eps' = \eps/2$ 
pick $K, F  \in \ksx$ and $ W, V, O \in \osx$ such that $K \se W \se \cl W  \se U$, $C  \se O \se F \se V$, $V \se K$, $ \la( U) - \la(K) < \eps'$, and
$ \la(V) - \la(C) < \eps'.$  
Then also 
$$ \la(U) - \la(W) < \eps', \ \ \ \ \  \la(F) - \la(C) < \eps',$$
so we approximate $U$ and $C$ by $W$ and $F$.
By the above argument it is enough to assume $ W \setminus F$ is connected. Since $W$ and $F$ are solid sets and 
$X \sm (W \sm F) = (X \sm W) \sc F$, we see that $W \sm F$ is a bounded open semisolid set. 
If $X$ is compact, then 
$\la_1(W \sm F) = \la(X) - \la(X \sm W) - \la(F) = \la(W) - \la(F)$. 
If $X$ is noncompact, then $\widetilde{W \sm F} = W$, and   $\la_1(W \sm F) = \la(W) - \la(F)$. In any case,   
$$\la_1(W \sm F)  = \la(W) - \la(F).$$
Since $ W \setminus F$ is connected, so is $B = \overline{ W \setminus F} $. Thus, $B \in \mathscr{K}_c(X), \  B \se \cl W \sm O \se U \sm C$ and 
using part \ref{pa2} of Lemma \ref{Prla1LC}
\begin{align*}
 \la(U) - \la_2(C)- \la_1(B) &\le \la(U) - \la_2(C) -\la_1(W \sm F)  \\
 &= \la(U) - \la(C) - \la(W) + \la(F) < 2 \eps' = \eps.
\end{align*}
 
Now suppose that lemma holds for any open bounded open solid set which contains less than $n$ disjoint compact solid sets.
We shall show that the statement holds for $U \in \bosx$ containing  $C = C_1  \sc \ldots \sc C_n, \, C_i \in \ksx$.
Suppose that  $U \sm  C$ is disconnected.  By Lemma \ref{ossdec} write $ U = \bsc_{i \in I} C_i \sc  \bsc_{j=1}^p U_j$, where 
$ p \in \N$ and $ I \ne  \O$. We can write $\{ 1, \ldots, n\} = I \sc \bsc_{j=1}^p I_j $, where $I_j = \{ i: \, C_i \se U_j\}$.
Each $| I_j| < n$, so by induction hypothesis   
find $B_j  \se U_j \sm \bsc_{i \in I_j} C_i, \, B_j \in \mathscr{K}_{0}(X)$ such that $ \sum_{j=1}^p ( \la(U_i) - \sum_{j \in I_j} \la(C_j) - \la_2(B_j)) < \eps$. 
Set $B = \bsc_{j=1}^p B_j$. Then $B \se U \sm C$ and 
$$ \la(U) - \la_2 (C) - \la_2(B) = \la(U) - \sum_{i=1}^n \la(C_i) - \la_2(B)  
   = \sum_{j=1}^p  (\la(U_i) - \sum_{j \in I_j} \la(C_j) - \la_2 (B_j)) < \eps. $$
Now suppose  $U \sm  C$ is connected. 
As in the induction step $n=1$, choose
$K, F_i  \in \ksx$ and $ W, V_i, O_i \in \osx$ such that $K \se W \se \cl W \se U$, $C_i \se O_i  \se F_i \se V_i$, $V_i \se K$ for $i=1, \ldots, n$,
and $ \la(U) - \la(K) < \eps, \ \sum_{i=1}^n ( \la(V_i) - \la(C_i)) < \eps$.  Again, 
it is enough to assume $W \sm \bsc_{i=1}^n F_i$ is connected. Then 
$ B = \overline{W \sm \bsc_{i=1}^n F_i}$
is a compact connected set 
for which the argument similar to one in  step $n=1$ shows that $ B \se U \sm C$ and $\la(U) -  \la_2(C)  -\la_1(B) < \eps$.
\end{proof} 

\begin{lemma} \label{Lela2LC}
The set function $\lambda_2$ from Definition \ref{la2LC}  satisfies the following properties:
\begin{enumerate}[label=(\roman*),ref=(\roman*)]
\item
$\lambda_2$ is real-valued, $\lambda_2 = \lambda_1 $ on $\mathscr{K}_{c}(X)$ and $\lambda_2 = \lambda$ on $ \mathscr{K}_{s}(X)$. 
\item
$\lambda_2$ is finitely additive on $\mathscr{K}_{0}(X)$.
\item
$\lambda_2$ is monotone on $\mathscr{K}_{0}(X).$
\item \label{bossEq}
$\la_1(U) = \sup\{ \la_2 (K): K  \se U, \, K \in \bcox \} $ for $ U \in \bossx$.
\end{enumerate}
\end{lemma}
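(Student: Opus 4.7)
The plan is to treat (i)--(iii) as routine consequences of Definition \ref{la2LC} together with Lemma \ref{Prla1LC}, and to reduce (iv) to Lemma \ref{OssKo} by passing to the solid hull of $U$.

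For (i), observe that the decomposition of $K \in \mathscr{K}_0(X)$ into disjoint compact connected pieces is unique, since any two disjoint compact subsets are clopen in their union, so the pieces are exactly the connected components of $K$. Hence $\lambda_2$ is well-defined, coincides with $\lambda_1$ on $\mathscr{K}_c(X)$, and therefore with $\lambda$ on $\mathscr{K}_s(X)$ by Lemma \ref{Prla1LC}\ref{pa1}; real-valuedness is inherited from $\lambda_1$. For (ii), concatenating the component decompositions of disjoint $K, L \in \mathscr{K}_0(X)$ gives a decomposition of $K \sqcup L$. For (iii), if $K = \bigsqcup_i K_i \subseteq L = \bigsqcup_j L_j$ with all pieces in $\mathscr{K}_c(X)$, then each connected $K_i$ lies in a unique $L_j$; grouping the $K_i$ accordingly and applying Lemma \ref{Prla1LC}\ref{pa2} within each $L_j$ yields $\lambda_2(K) \leq \lambda_2(L)$.

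For (iv), let $U \in \mathscr{O}_{ss}^*(X)$. The bounded components $B_1, \ldots, B_m$ of $X \setminus U$ are closed by Lemma \ref{CmpCmplBdA}, hence compact; they are solid by Lemma \ref{SolidCompoLC} (applied to the connected bounded open set $U$); and there are only finitely many of them by the semisolidity of $U$. Thus each $B_i \in \mathscr{K}_s(X)$, and $\widetilde U = U \sqcup B_1 \sqcup \cdots \sqcup B_m$ lies in $\mathscr{O}_s^*(X)$ by Lemma \ref{PrSolidHuLC} in the noncompact case, and equals $X$ by Remark \ref{leSolidHu} in the compact case. Definition \ref{la1LC} then gives
\[
\lambda_1(U) = \lambda(\widetilde U) - \sum_{i=1}^m \lambda(B_i).
\]
The bound $\sup\{\lambda_2(K): K \in \mathscr{K}_0(X),\ K \subseteq U\} \leq \lambda_1(U)$ is immediate from Lemma \ref{Prla1LC}\ref{pa2} with $A = U$ and disjoint compact connected pieces $K_i$. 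For the reverse inequality, given $\varepsilon > 0$ apply Lemma \ref{OssKo} to the ambient set $\widetilde U$ and the compact solid family $C := B_1 \sqcup \cdots \sqcup B_m$: there exists $B \in \mathscr{K}_0(X)$ with $B \subseteq \widetilde U \setminus C = U$ and $\lambda(\widetilde U) - \lambda_2(C) - \lambda_2(B) < \varepsilon$. Since $\lambda_2(C) = \sum_i \lambda(B_i)$ by (i), this rearranges to $\lambda_1(U) - \lambda_2(B) < \varepsilon$, giving the desired equality.

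The main obstacle I anticipate is the uniform bookkeeping between the compact and noncompact cases in (iv): one must verify that both formulations of $\lambda_1(U)$ in Definition \ref{la1LC} are captured uniformly by the equation $\lambda_1(U) = \lambda(\widetilde U) - \sum_i \lambda(B_i)$, and that $\widetilde U$ qualifies as an element of $\mathscr{O}_s^*(X)$ in both settings so that Lemma \ref{OssKo} genuinely applies. Once these identifications are in place, the solid-hull reduction to Lemma \ref{OssKo} does all the substantive work.
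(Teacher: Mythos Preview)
Your proof is correct and follows essentially the same route as the paper: parts (i)--(iii) are dispatched via Definition~\ref{la2LC} and Lemma~\ref{Prla1LC}, and part (iv) is reduced to Lemma~\ref{OssKo} by writing $\widetilde U = U \sqcup \bigsqcup_i B_i$ and applying that lemma with ambient solid set $\widetilde U$ and compact solid family $\{B_i\}$. Your treatment is, if anything, slightly more explicit than the paper's about the compact/noncompact bookkeeping and about why the $B_i$ are compact solid; the only edge case to note is $m=0$ (i.e.\ $U$ already solid), where Lemma~\ref{OssKo} is not literally invoked but the conclusion follows directly from condition~\ref{regul} of Definition~\ref{DeSSFLC}.
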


\begin{proof} 
Part (i) easily follows from the definition of $\lambda_2$ and 
Lemma \ref{Prla1LC}. Part (ii) is obvious. \\
(iii) Let $ C \subseteq K, $ where 
$C, \ K \in \mathscr{K}_{0}(X)$. Write $ C= \bigsqcup_{i=1}^n C_i, \ K = \bigsqcup_{j=1}^m K_j,$ where 
the sets $C_i (i=1, \ldots, n)$ and $ K_j (j =1, \ldots, m)$ are compact connected.
By connectedness, each $C_i$ is contained in one of the sets  $K_j.$ Consider index sets 
$ I_j  = \{ i : \ C_i \subseteq K_j \} $ 
for  $ j = 1, \ldots, m.$  By Lemma \ref{Prla1LC}  we have
$\sum_{i \in I_j} \lambda_1(C_i) \le \lambda_1(K_j).$
Then
$\lambda_2(C) = \sum_{i=1}^n \lambda_1(C_i)  = 
\sum_{j=1}^m \sum_{i \in I_j}  \lambda_1(C_i) 
\le \sum_{j=1}^m \lambda_1 (K_j) = \lambda_2 (K). $
(iv) If $K  \se U, \, K \in \bcox, \, U \in  \mathscr{O}_{ss}^*(X) $ then by part  \ref{pa2} of Lemma \ref{Prla1LC} $\la_2(K) \le \la_1(U)$.
Let $ \widetilde U = U \sc \bsc_{i=1}^n C_i$, where $C_i$ are bounded components of $ X \sm U$. By Lemma \ref{OssKo} find $B \in U, B \in \bcox$
such that $  \la(\widetilde U) - \sum_{i=1}^n \la_1(C_i) - \la_2(B) = \la_1(U) - \la_2(B) < \eps$. 
\end{proof}
  
\section{Extension to $\mathscr{O}(X) \cup \mathscr{C}(X)$} \label{ExttoTM}

We are now ready to extend the set function $\lambda_2$ to a set function $\mu$ 
defined on $\mathscr{O}(X) \cup \mathscr{C}(X).$

\begin{definition} \label{muLC}
For an open set $U$ and  a closed set $F$ we define 
$$ \mu(U)  = \sup\{ \lambda_2(K) : \ K \subseteq U , \ K \in \mathscr{K}_{0}(X) \}, $$  
$$ \mu(F) = \inf \{ \mu(U): \  F \subseteq U, \ U \in \mathscr{O}(X) \}.$$ 
\end{definition}

Note that $ \mu$ may assume $ \infty$.

\begin{lemma} \label{PropMuLC}
The set function $\mu$ in Definition \ref{muLC} satisfies the following properties:
\begin{enumerate}[label=(p\arabic*),ref=(p\arabic*)]
\item \label{monotoneLC}
$\mu$ is monotone, i.e. if $A \subseteq B, \, A,B \in \mathscr{O}(X) \cup \mathscr{C}(X)$ then $ \mu(A) \le \mu(B)$.
\item \label{finiteness}
$\mu(A) < \infty$ for each $ A \in \mathscr{A}^{*}(X)$. In particular, $ \mu$ is compact-finite.
\item \label{ineqla2}
$\mu \ge \lambda_2$ on $\mathscr{K}_{0}(X).$
\item \label{CoAppr}
Let $ K \subseteq V, K \in \mathscr{K}(X), \ V \in \mathscr{O}(X)$. Then for any positive $\epsilon$ there exists $ K_1 \in \mathscr{K}_{0}(X)$ 
such that 
$ K \subseteq K_1 \subseteq V$ and $ \mu(K_1) - \mu(K) < \epsilon.$
\item \label{extla2}
$\mu = \lambda$ on $ \mathscr{A}_{s}^{*}(X).$
\item\label{OpenFinAddLC}
$ \mu$ is finitely additive on open sets.
\item\label{CloFinAddLC}
If $G = F \sqcup K$, where $G, F \in \mathscr{C}(X), \ K \in \mathscr{K}(X)$ then $\mu(G) = \mu(F) + \mu(K).$ In particular, 
$\mu$ is finitely additive on compact sets.
\item \label{AddBox}
$\mu$ is additive on $\mathscr{O}(X)$, i.e.
if $V = \bigsqcup_{i \in I} V_i$, where $ V, \ V_i \in \mathscr{O}(X)$ for all $ i \in I$,  then
$\mu(V) = \sum_{i \in I} \mu(V_i). $
\item \label{superaddF}
If $G \sqcup V = F$ where $ G, F  \in \mathscr{C}(X), \ V \in \mathscr{O}(X)$ then $ \mu(G) + \mu(V) \le \mu(F).$
\item \label{superaddU}
If $G \sqcup V \subseteq U$ where $ G  \in \mathscr{C}(X),  \ V,U  \in \mathscr{O}(X)$ then $ \mu(G) + \mu(V) \le \mu(U).$
\item \label{mula1}
$\mu = \lambda_1$ on $\mathscr{K}_{c}(X) \cup \mathscr{O}_{ss}^{*}(X) $ and $\mu = \lambda_2$ on $\mathscr{K}_{0}(X)$.
\item \label{regularityLC}
$\mu(U) = \sup\{\mu(C): \ C \subseteq U , \ C \in \mathscr{K}(X) \}$ for $ U  \in \mathscr{O}(X).$
\end{enumerate}
\end{lemma}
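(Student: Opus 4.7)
The plan is to prove the two inequalities separately; both follow formally from items already established in this lemma, so the work will be organizational rather than substantive.

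For the inequality $\mu(U) \le \sup\{\mu(C): C \in \mathscr{K}(X), C \subseteq U\}$, I would start from the defining formula
\[
\mu(U) = \sup\{\lambda_2(K): K \in \mathscr{K}_{0}(X), K \subseteq U\}
\]
of Definition \ref{muLC}. Since every $K \in \mathscr{K}_{0}(X)$ is compact and, by part \ref{mula1}, $\mu(K) = \lambda_2(K)$ on $\mathscr{K}_{0}(X)$, one may rewrite the right-hand side as $\sup\{\mu(K): K \in \mathscr{K}_{0}(X),\ K \subseteq U\}$. Because $\mathscr{K}_{0}(X) \subseteq \mathscr{K}(X)$, this supremum is bounded above by the supremum over all compact subsets of $U$, giving the desired inequality.

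The reverse inequality $\sup\{\mu(C): C \in \mathscr{K}(X),\ C \subseteq U\} \le \mu(U)$ is immediate from monotonicity (part \ref{monotoneLC}): any compact $C \subseteq U$ is, in particular, a closed subset of the open set $U$, so $\mu(C) \le \mu(U)$, and the bound persists after taking the supremum.

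The main obstacle, in this sense, is not in this lemma at all but in the earlier parts \ref{monotoneLC} and \ref{mula1}, on which the argument leans entirely. No additional regularity machinery (e.g.\ Lemma \ref{LeCCoU}) needs to be invoked here, because the passage from the inner approximation by $\mathscr{K}_{0}(X)$-sets to inner approximation by arbitrary compacta is automatic once one knows that $\mu$ already agrees with $\lambda_2$ on $\mathscr{K}_{0}(X)$ and is monotone on $\mathscr{O}(X)\cup\mathscr{C}(X)$.
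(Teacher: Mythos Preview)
Your argument is correct. For the forward inequality you use \ref{mula1} (equality $\mu=\lambda_2$ on $\mathscr{K}_0(X)$) where the paper uses the weaker \ref{ineqla2} ($\mu\ge\lambda_2$); either suffices. For the reverse inequality you invoke monotonicity \ref{monotoneLC} directly, while the paper takes a longer path: given compact $C\subseteq U$ it produces via Lemma~\ref{LeCCoU} a set $K\in\mathscr{K}_0(X)$ with $C\subseteq K\subseteq U$, then uses $\mu(C)\le\mu(K)=\lambda_2(K)$ to bound the supremum by the defining expression for $\mu(U)$. Your observation that this detour is unnecessary is right---once \ref{monotoneLC} is in hand, $\mu(C)\le\mu(U)$ is immediate for any closed $C$ inside open $U$, and Lemma~\ref{LeCCoU} contributes nothing here. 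The paper's route has the mild virtue of showing explicitly that the two suprema (over $\mathscr{K}(X)$ and over $\mathscr{K}_0(X)$) coincide, but that is not needed for the stated claim.
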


\begin{proof}
(p1)
Clearly, $\mu$ is monotone on open sets and on closed sets.
Let $ V \in \mathscr{O}(X), F \in \mathscr{C}(X)$. The monotonicity in the case $ F \subseteq V$ is obvious.
Suppose $ V \subseteq F$. For any open set $U $ with $ F \subseteq U$ we have $V \subseteq U$, so $ \mu(V) \le \mu(U)$, 
Taking infimum over sets $U$ we obtain $ \mu(V) \le \mu(F)$.   \\
(p2)
Let $K \in \mathscr{K}(X)$. By Lemma \ref{LeConLC}  choose $V \in \mathscr{O}_{c}^{*}(X)$  and $C \in \mathscr{K}_{c}(X)$ such that 
$ K \subseteq V \subseteq C \subseteq X$. For any $D \in \mathscr{K}_{0}(X), D \subseteq V$ by Lemma \ref{Lela2LC} we have $\lambda_2(D) \le \lambda_2(C)$, and 
$\lambda_2(C) < \infty$. 
By Definition \ref{muLC} $\mu(V) \le \lambda_2(C)$, and then $ \mu(K) \le \mu(V) \le \lambda_2(C) < \infty$. 
Thus, $ \mu$ is compact-finite.
If $U$ is an open bounded set then $ \mu(U) \le \mu(\overline{U}) < \infty$.  \\
(p3)
Let $K \in \mathscr{K}_{0}(X).$ For any open set $U$ containing $K$  we have 
$\mu(U) \ge \lambda_2(K)$ by the definition of $\mu$. Taking infimum over sets $U$ we obtain $\mu(K)  \ge \lambda_2(K).$  \\
(p4)
$\mu(K) < \infty$, so by Definition \ref{muLC} find $U \in \mathscr{O}(X)$ such that $ U \subseteq V,  \, \mu(U)  - \mu(K) < \epsilon$. 
Let $U_1, \ldots, U_n$ be 
finitely many connected components of $U$ that cover $K$. 
By Lemma \ref{LeConLC} 
 pick $V_i \in \mathscr{O}_{c}^{*}(X)$  such that
$K \cap U_i \subseteq V_i \subseteq \overline{ V_i} \subseteq U_i$ for $ i =1, \ldots, n$. We may take 
$K_1 = \bigsqcup_{i=1}^n \overline{ V_i}$, for $ K_1 \subseteq V$ and 
$ \mu(K_1) - \mu(K) < \mu(\bigsqcup_{i=1}^n U_i) - \mu(K) \le \mu(U) - \mu(K) < \epsilon .$ \\
(p5)
First we shall show that $\mu= \lambda$ on $\mathscr{O}_{s}^{*}(X)$. Let $U \in \mathscr{O}_{s}^{*}(X)$, 
so by part \ref{finiteness} $ \mu(U) < \infty$. 
By Definition \ref{muLC}, given $\epsilon > 0$, choose $K \in \mathscr{K}_{0}(X)$ such that
$K \subseteq U$ and $ \mu(U) -\epsilon < \lambda_2(K)$.
By Lemma \ref{LeCsInside} there exists  $ C \in \mathscr{K}_{s}(X)$ such that $ K \subseteq C \subseteq U$.
Now using Lemma \ref{Lela2LC} and Definition \ref{DeSSFLC} we have:
$\mu(U) - \epsilon < \lambda_2(K)  \le \lambda_2(C) = \la(C) 
\le \sup \{ \lambda(C) : \ \ C \subseteq U, \ C \in \mathscr{K}_{s}(X)\} = \lambda(U).$
Hence, $\mu(U) \le \lambda(U)$.
For the opposite inequality,  by Lemma \ref{Lela2LC}
and Definition \ref{DeSSFLC}
\begin{align*}
\lambda(U) &=  \sup\{ \lambda(C): \ C \subseteq U,  C \in \mathscr{K}_{s}(X) \} 
= \sup\{ \lambda_2 (C): \ C \subseteq U,  C \in \mathscr{K}_{s}(X) \} \\  
&\le \sup\{ \lambda_2 (C): \  C \subseteq U,  C \in \mathscr{K}_{0}(X) \} = \mu(U).
\end{align*}
Therefore, $\mu(U) = \lambda(U)$ for any $ U \in \mathscr{O}_{s}^{*}(X)$. 
Now we shall show that $\mu = \lambda$ on $\mathscr{K}_{s}(X)$.  From part \ref{ineqla2} above and Lemma \ref{Lela2LC}
we have $\mu \ge \lambda_2 = \lambda$ on $\mathscr{K}_{s}(X)$. 
Since $\mu = \lambda$ on $\mathscr{O}_{s}^{*}(X)$, for $C \in \mathscr{K}_{s}(X)$ we have by Definition \ref{DeSSFLC} and 
Definition \ref{muLC}:
\begin{align*}
\lambda(C) &=  \inf \{ \lambda(U): \ \ U \in \mathscr{O}_{s}^{*}(X), \ C \subseteq U \}  
= \inf \{ \mu(U): \ \ U \in \mathscr{O}_{s}^{*}(X), \ C \subseteq U \} \\
&\ge \inf \{ \mu(U): \ \ U \in \mathscr{O}(X) , \ C \subseteq U \}  = \mu(C).
\end{align*}
Therefore, $\mu = \lambda $ on $\mathscr{K}_{s}(X)$.  \\
(p6)
Let $U_1 , U_2 \in \mathscr{O}(X)$ be disjoint. For any $C_i, C_2 \in \mathscr{K}_{0}(X)$ 
with $ C_i \subseteq U_i, \ i=1,2$  by Lemma \ref{Lela2LC} and Definition \ref{muLC} we have
$ \lambda_2(C_1)  + \lambda_2(C_2) = \lambda_2(C_1 \sqcup C_2) \le \mu(U_1 \sqcup U_2).$
Then by Definition  \ref{muLC} we obtain
$ \mu(U_1) + \mu(U_2) \le \mu(U_1 \sqcup U_2).$ 
For the converse inequality, note that given $ C \subseteq U_1 \sqcup U_2, \  C \in \mathscr{K}_{0}(X)$ we 
have   $C = C_1 \sqcup C_2$, where $C_i = C \cap U_i \in \mathscr{K}_{0}(X), \ i =1,2$ (since each connected component of $C$ must be 
contained either  in $U_1$ or in $U_2$). Then
$ \lambda_2(C) = \lambda_2(C_1) + \lambda_2(C_2) \le \mu(U_1)  + \mu(U_2),$
giving
$ \mu(U_1 \sqcup U_2)  \le  \mu(U_1)  + \mu(U_2).$ \\
(p7)
A  compact and a closed set that are disjoint can be separated by disjoint open sets, and we can use the argument from part  \ref{compclo}  of 
Proposition \ref{PrFinAddLC}.  \\
(p8)
Let $ V = \bigsqcup_{i \in I} V_i$ with $V, V_i \in \mathscr{O}(X)$ for all $ i \in I$.
By parts \ref{OpenFinAddLC} and  \ref{monotoneLC}
for any finite $I' \subseteq I$  we have
$ \sum_{i \in I'} \mu(V_i) = \mu(\bigsqcup_{i \in I'} V_i) \le \mu(V), $ so
$ \sum_{i \in I} \mu(V_i) \le \mu(V).$
To prove the opposite inequality, first assume that $ \mu(V) < \infty$.
For $\epsilon >0 $ find a compact $C \in \mathscr{K}_{0}(X)$ contained in $V$ 
such that $ \mu(V) - \epsilon < \lambda_2(C).$ By compactness, $ C \subseteq \bigsqcup_{i \in I'} V_i$ 
for some finite subset $I'$ of $I$.
Then by connectedness $C = \bigsqcup_{i \in I'} C_i$ where 
$C_i = C \cap V_i \subseteq V_i$, and $C_i \in \mathscr{K}_{0}(X)$  for each $i \in I'.$ 
By Lemma \ref{Lela2LC} and part \ref{ineqla2} we have:
$$\mu(V)-\epsilon < \lambda_2(C) = \lambda_2(\bigsqcup_{i \in I'} C_i) =  \sum_{i \in I'} \lambda_2(C_i) \le  \sum_{i \in I'} \mu(C_i)  
\le  \sum_{i \in I'} \mu(V_i) \le \sum_{i \in I} \mu(V_i). $$ 
This shows that  $\mu(V) = \sum_{i \in I} \mu(V_i)$ when $ \mu(V) < \infty$.

Now suppose $ \mu(V) = \infty$. For $ n \in \mathbb{N}$ find a compact $ K \subseteq V$ such that $ \mu(K) > n$.
Choose a finite index set $I_n \subseteq I$ such that $ K \subseteq \bigsqcup_{i \in I_n} V_i$. Then
$ \sum_{i \in I} \mu(V_i) \ge  \sum_{i \in I_n} \mu(V_i) = \mu (\bigsqcup_{i \in I_n} V_i) \ge \mu(K) > n.$
Thus $ \sum_{i \in I} \mu(V_i) = \infty = \mu(V)$. \\
(p9)
It is  enough to show the statement for the case $ \mu(F) < \infty$.
If $K  \subseteq V, \  K \in \mathscr{K}_{0}(X)$ then $ G \sqcup K \subseteq F$. By parts \ref{ineqla2}, \ref{CloFinAddLC} and 
\ref{monotoneLC} $\  \mu(G) + \lambda_2(K) \le \mu(G) + \mu(K) \le \mu(F)$.  
Then $ \mu(G) + \mu(V) \le \mu(F)$. \\
(p10)
It is  enough to show the statement for the case $ \mu(U) < \infty$.
If $K  \subseteq V, \  K \in \mathscr{K}_{0}(X)$ then $F= G \sqcup K \subseteq U$. By  parts \ref{ineqla2}, \ref{CloFinAddLC}, 
and  Definition \ref{muLC} $\mu(G) + \lambda_2(K) \le \mu(G) + \mu(K) = \mu(F) \le \mu(U)$. 
Then  $ \mu(G) + \mu(V) \le \mu(U)$. \\
(p11)
Let $C \in \mathscr{K}_{c}(X)$.  According to Lemma \ref{SolidCompoLC} and 
Definition \ref{solid hull} write $\widetilde C \in \mathscr{K}_{s}(X)$ as  
$ \widetilde C = C \sqcup \bigsqcup_{i \in I} U_i$
where $U_i \in \mathscr{O}_{s}^{*}(X)$ are the bounded components of $X \setminus C$.
Given $\epsilon>0$ choose by Definition \ref{DeSSFLC} $V \in \mathscr{O}_{s}^{*}(X)$ such that 
$\widetilde C \subseteq V$ and $ \lambda(V) -\lambda(\widetilde C) < \epsilon$. 
By parts  \ref{AddBox}, \ref{superaddF}, and \ref{monotoneLC}
$$ \mu(C) + \sum_{i\in I} \mu(U_i) =  \mu(C) + \mu (\bigsqcup_{i\in I} (U_i))  \le \mu(\widetilde C) \le \mu(V).$$
Then using part \ref{extla2} and Definition \ref{la1LC} we have:
$$ \mu(C) \le \mu(V) - \sum_{i \in I} \mu(U_i) = \lambda(V) - \sum_{i\in I} \lambda(U_i) 
\le \lambda(\widetilde C)  - \sum_{i\in I} \lambda(U_i) + \epsilon = \lambda_1(C) + \epsilon.$$
Thus, $\mu(C) \le \lambda_1(C)$. By part \ref{ineqla2} and Lemma \ref{Lela2LC}
$\mu(C) \ge \lambda_2(C) =\lambda_1(C)$. So $\mu =\lambda_1$ on $\mathscr{K}_{c}(X)$. 
By part \ref{bossEq} of Lemma \ref{Lela2LC}  $\mu =\lambda_1$ on $\mathscr{O}_{ss}^*(X)$. 
From part \ref{CloFinAddLC} and Definition \ref{la2LC} we have $\mu = \lambda_2$ on $\mathscr{K}_{0}(X)$. \\ 
(p12)
Using part  \ref{ineqla2}
\begin{align*}
\mu(U) =& \sup \{\lambda_2(C) : C \subseteq U , \ C  \in \mathscr{K}_{0}(X) \} 
\le  \sup \{\mu(C) : C \subseteq U , \ C  \in \mathscr{K}_{0}(X) \} \\ 
&\le  \sup \{\mu(C) : C \subseteq U , \ C  \in \mathscr{K}(X) \}. 
\end{align*}
For the converse inequality, given $ C \subseteq U, \  U \in \mathscr{O}(X), \ C \in \mathscr{K}(X)$ choose 
by Lemma \ref{LeCCoU}   $K \in \mathscr{K}_{0}(X)$ with $ C \subseteq K \subseteq U$. 
Then  by parts \ref{monotoneLC} and \ref{mula1} $\mu(C) \le \mu(K) = \lambda_2(K)$, so 
$$ \sup \{\mu(C) : C \subseteq U , \ C  \in \mathscr{K}(X) \}  \le  \sup \{\lambda_2(K) : K \subseteq U , \ K  \in \mathscr{K}_{0}(X) \}   
= \mu(U). $$
\end{proof}

\section{Finite additivity on $  \mathscr{O}(X) \cup  \mathscr{K}(X)$} \label{finAdAll}

Finite additivity of $ \mu$ (defined in Definition \ref{muLC}) on $  \mathscr{O}(X) \cup  \mathscr{K}(X)$ will be established in a series of lemmas.

\begin{lemma} \label{step2a}
If $C \subseteq U, \ C \in \mathscr{K}_{0}(X), \ U \in  \mathscr{O}_{ss}^{*}(X)$ then 
$ \mu(U) = \mu(C) + \mu(U \setminus C). $
\end{lemma}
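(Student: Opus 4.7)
The plan is to reduce the statement to the (infinite) additivity identity for $\lambda_1$ already proved in Lemma~\ref{Prla1LC}(iii), after decomposing $U\setminus C$ via Lemma~\ref{DecUssN}.

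First I would apply Lemma~\ref{DecUssN} to $C=\bigsqcup_{i=1}^n C_i\subseteq U$ to obtain the decomposition
\[
U \;=\; \bigsqcup_{i=1}^{n} C_i \;\sqcup\; \bigsqcup_{t\in T} U_t,
\]
where each $U_t\in\mathscr{O}_{ss}^{*}(X)$ and all but finitely many $U_t$ are actually in $\mathscr{O}_{s}^{*}(X)$. In particular $U\setminus C=\bigsqcup_{t\in T} U_t$ is a disjoint union of bounded open sets, so by additivity of $\mu$ on open sets (Lemma~\ref{PropMuLC}\ref{AddBox}) we have $\mu(U\setminus C)=\sum_{t\in T}\mu(U_t)$. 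Combining this with $\mu(C)=\lambda_2(C)=\sum_{i=1}^n \lambda_1(C_i)$ and $\mu(U_t)=\lambda_1(U_t)$ and $\mu(U)=\lambda_1(U)$, which are all instances of Lemma~\ref{PropMuLC}\ref{mula1}, the desired identity
\[
\mu(U)\;=\;\mu(C)+\mu(U\setminus C)
\]
is equivalent to
\[
\lambda_1(U)\;=\;\sum_{i=1}^{n}\lambda_1(C_i)+\sum_{t\in T}\lambda_1(U_t).
\]

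Next I would verify this equality by invoking Lemma~\ref{Prla1LC}(iii). Take $A=U\in\mathscr{O}_{ss}^{*}(X)$; as the ``$A_i$'s'' take the $C_i$'s (which lie in $\mathscr{K}_{c}(X)$) together with the finitely many $U_t$'s that are only semisolid (in $\mathscr{O}_{ss}^{*}(X)$); and as the ``$B_s$'s'' take all the remaining $U_t$'s, which lie in $\mathscr{O}_{s}^{*}(X)\subseteq\mathscr{A}_{s}^{*}(X)$ and none of which are compact. The hypotheses of Lemma~\ref{Prla1LC}(iii) are thus satisfied (in particular the requirement that at most finitely many of the $B_s$'s be compact is trivially met), and the conclusion is exactly the displayed additivity.

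The proof is essentially a bookkeeping exercise once one invokes the right two lemmas, so there is no serious obstacle. The only point deserving care is matching the decomposition from Lemma~\ref{DecUssN} to the hypotheses of Lemma~\ref{Prla1LC}(iii) — in particular, observing that the solid components of $U\setminus C$ are \emph{open} solid sets, hence contribute to the ``$B_s$'' part and do not count toward the ``finitely many compact solid'' quota. Finiteness of all the sums involved is automatic from the fact that $\mu$ and $\lambda_1$ are finite on bounded sets (Lemma~\ref{Prla1LC}(i), Lemma~\ref{PropMuLC}\ref{finiteness}).
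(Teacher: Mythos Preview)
Your proof is correct and follows essentially the same route as the paper: decompose $U\setminus C$ via Lemma~\ref{DecUssN}, apply the additivity identity Lemma~\ref{Prla1LC}\ref{pa3} at the level of $\lambda_1$, and then translate everything to $\mu$ using Lemma~\ref{PropMuLC}\ref{mula1} and \ref{AddBox}. Your write-up is in fact a bit more explicit than the paper's about how the pieces of the decomposition match the hypotheses of Lemma~\ref{Prla1LC}\ref{pa3}, which is helpful.
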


\begin{proof}
Let $C =  C_1 \sqcup C_2 \sqcup \ldots \sqcup C_n$.
By Lemma \ref{DecUssN} 
$ U = \bsc_{j=1}^n C_j  \sqcup \bigsqcup_{t \in T}  U_t $,
where sets $U_t \in  \mathscr{O}_{ss}^{*}(X)$, and all but finitely many are in $\mathscr{O}_{s}^{*}(X)$.
By part \ref{pa3} of Lemma \ref{Prla1LC}
$ \la(U) = \sum_{i=1}^n \la_1(C_i) + \sum_{t \in T} \la_1 (U_t),$
so by parts \ref{mula1} and \ref{AddBox} of Lemma \ref{PropMuLC} 
$ \mu(U) - \mu(C) = \sum_{t \in T} \mu (U_t)  = \mu( U \setminus C). $
\end{proof}

\begin{lemma} \label{AddXcomp}
If $X$ is compact, $K \subseteq X, \ K  \in \mathscr{C}(X), \ U \in  \mathscr{O}(X)$  then 
$ \mu(X) = \mu(K) + \mu(X \setminus K). $
\end{lemma}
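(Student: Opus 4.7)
The plan is to establish the two inequalities $\mu(K)+\mu(X\setminus K)\le \mu(X)$ and $\mu(X)\le \mu(K)+\mu(X\setminus K)$ separately. The first is immediate from part \ref{superaddF} of Lemma \ref{PropMuLC}: since $X$ is closed, $K$ is closed, $X\setminus K$ is open, and $X=K\sqcup(X\setminus K)$, that part gives $\mu(K)+\mu(X\setminus K)\le \mu(X)$.

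For the reverse inequality, the key observation is that when $X$ is compact (and connected, locally connected, as standing in this section), $X$ itself belongs to $\mathscr{O}_{ss}^{*}(X)$: $X$ is open in itself, connected, its complement $X\setminus X=\emptyset$ has only (zero, hence) finitely many components, and $\overline{X}=X$ is compact, so $X$ is bounded. Consequently Lemma \ref{step2a} is applicable with $U=X$, yielding
\[
\mu(X)=\mu(C)+\mu(X\setminus C) \qquad \text{for every } C\in\mathscr{K}_{0}(X).
\]

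Now fix $\epsilon>0$. By Definition \ref{muLC} (outer regularity on closed sets) choose $U\in\mathscr{O}(X)$ with $K\subseteq U$ and $\mu(U)<\mu(K)+\epsilon$. Since $X$ is locally compact and locally connected, Lemma \ref{LeCCoU} yields $C\in\mathscr{K}_{0}(X)$ with $K\subseteq C\subseteq U$. Applying the identity above together with monotonicity (part \ref{monotoneLC} of Lemma \ref{PropMuLC}) — noting that $X\setminus C\subseteq X\setminus K$ are both open and $C\subseteq U$ with $C$ compact and $U$ open — we obtain
\[
\mu(X)=\mu(C)+\mu(X\setminus C)\le \mu(U)+\mu(X\setminus K)<\mu(K)+\mu(X\setminus K)+\epsilon.
\]
Letting $\epsilon\downarrow 0$ completes the proof.

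The only real step that is not purely routine is the recognition that the whole space $X$ qualifies as a bounded open semisolid set in the compact case, which is exactly what allows Lemma \ref{step2a} to be invoked globally and transforms the whole argument into a simple outer-regularity approximation of $K$ by a set in $\mathscr{K}_{0}(X)$.
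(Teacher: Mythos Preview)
Your proof is correct and follows essentially the same approach as the paper's: both use Lemma~\ref{step2a} with $U=X$ (which is legitimate precisely because $X\in\mathscr{O}_{ss}^{*}(X)$ in the compact case, as you note), approximate $K$ from outside by a set $C\in\mathscr{K}_{0}(X)$ to within $\epsilon$ in $\mu$-value, and finish with monotonicity and part~\ref{superaddF} of Lemma~\ref{PropMuLC}. The only cosmetic difference is that the paper invokes part~\ref{CoAppr} of Lemma~\ref{PropMuLC} directly for the approximation step, while you unfold that step via Definition~\ref{muLC} and Lemma~\ref{LeCCoU}; these are equivalent.
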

 
\begin{proof}
Since $X$ is solid, by part \ref{extla2} of Lemma \ref{PropMuLC} $ \mu$ is finite. 
Let $K  \in \mathscr{C}(X)$. Given $ \eps >0$ by part \ref{CoAppr} choose $K_1 \in  \mathscr{K}_{0}(X), K  \se K_1$ 
such that $\mu(K) > \mu(K_1) - \eps$.
Using Lemma \ref{step2a} we have:
$ \mu(K) + \mu(X \sm K) > \mu(K_1) - \eps + \mu(X \sm K) \ge \mu(K_1)  + \mu(X \sm K_1) - \eps = \mu(X) - \eps.$
Thus, $ \mu(K) + \mu(X \sm K)  \ge \mu(X)$. The opposite inequality is by part \ref{superaddF} of Lemma \ref{PropMuLC}.
\end{proof}

\begin{lemma} \label{step3}
If $X$ is noncompact, $K \subseteq U, \ K \in \mathscr{K}(X), \ U \in  \mathscr{O}_{c}^{*}(X)$ then  
$ \mu(U) = \mu(K) + \mu(U \setminus K). $
\end{lemma}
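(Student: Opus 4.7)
The inequality $\mu(K) + \mu(U \setminus K) \le \mu(U)$ follows from part~\ref{kl} of Lemma~\ref{propTMLC} applied with $V = U \setminus K$, since $K \sqcup (U \setminus K) \subseteq U$. For the reverse inequality, fix $\varepsilon > 0$ and use part~\ref{CoAppr} of Lemma~\ref{PropMuLC} to pick $K_1 \in \mathscr{K}_0(X)$ with $K \subseteq K_1 \subseteq U$ and $\mu(K_1) < \mu(K) + \varepsilon$. Since $U \setminus K_1 \subseteq U \setminus K$, part~\ref{monotoneLC} of Lemma~\ref{PropMuLC} gives $\mu(U \setminus K_1) \le \mu(U \setminus K)$. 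Letting $\varepsilon \to 0$, it suffices to prove the identity $\mu(U) = \mu(K_1) + \mu(U \setminus K_1)$ for an arbitrary $K_1 \in \mathscr{K}_0(X)$ contained in $U$.

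Pass to the solid hull $\widetilde U \in \mathscr{O}_s^*(X) \subseteq \mathscr{O}_{ss}^*(X)$ (Lemma~\ref{PrSolidHuLC}) and write $\widetilde U = U \sqcup G$, where $G = \bigsqcup_\beta B_\beta$ is the (possibly infinite) disjoint union of the bounded components of $X \setminus U$; each $B_\beta \in \mathscr{K}_s(X)$ by Lemma~\ref{SolidCompoLC}. For every finite index set $\mathcal{J}$, the set $K_1 \sqcup \bigsqcup_{\beta \in \mathcal{J}} B_\beta$ belongs to $\mathscr{K}_0(X)$ and lies in $\widetilde U$, so Lemma~\ref{step2a}, together with part~\ref{CloFinAddLC} of Lemma~\ref{PropMuLC}, yields
\[
  \mu(\widetilde U) \;=\; \mu(K_1) + \sum_{\beta \in \mathcal{J}} \mu(B_\beta) + \mu\Bigl(\widetilde U \setminus K_1 \setminus \bigsqcup_{\beta \in \mathcal{J}} B_\beta\Bigr).
\]
By part~\ref{finiteness} of Lemma~\ref{PropMuLC}, $\mu(\widetilde U) < \infty$, so the series $S := \sum_\beta \mu(B_\beta) \le \mu(\widetilde U)$ converges.

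The next step is to let $\mathcal{J}$ exhaust all $\beta$'s and establish
\[
  \mu(\widetilde U) \;=\; \mu(K_1) + S + \mu(U \setminus K_1). \quad (\star)
\]
The direction $\mu(\widetilde U) \ge \mu(K_1) + S + \mu(U \setminus K_1)$ follows from part~\ref{kl} of Lemma~\ref{propTMLC} applied with the compact $K_1 \sqcup \bigsqcup_{\beta \in \mathcal{J}} B_\beta$ and the open set $U \setminus K_1$ inside $\widetilde U$, then taking the supremum over finite $\mathcal{J}$. The reverse direction is the \textbf{main obstacle}: using the identity $\widetilde U \setminus K_1 \setminus \bigsqcup_{\beta \in \mathcal{J}} B_\beta = (U \setminus K_1) \sqcup \bigsqcup_{\beta \notin \mathcal{J}} B_\beta$, it amounts to proving the estimate $\mu\bigl((U \setminus K_1) \sqcup \bigsqcup_{\beta \notin \mathcal{J}} B_\beta\bigr) \le \mu(U \setminus K_1) + \sum_{\beta \notin \mathcal{J}} \mu(B_\beta)$ for every finite $\mathcal{J}$. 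I would prove this by inner regularity (part~\ref{regularityLC} of Lemma~\ref{PropMuLC}): any compact $D \in \mathscr{K}_0(X)$ contained in the open set on the left can be enlarged by finitely many additional $B_\beta$'s so that Lemma~\ref{step2a} becomes applicable inside $\widetilde U$, and the residual tail $\sum_{\beta \notin \mathcal{J}} \mu(B_\beta)$ is made arbitrarily small by enlarging $\mathcal{J}$ because $S$ converges. The delicate point is that the $B_\beta$'s may accumulate in $X$, so this estimate must be justified carefully.

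Once $(\star)$ is proved, its specialization to $K_1 = \emptyset$ gives $\mu(\widetilde U) = S + \mu(U)$; subtracting this from $(\star)$ yields $\mu(U) = \mu(K_1) + \mu(U \setminus K_1)$, which, combined with the initial $\varepsilon$-reduction, completes the proof.
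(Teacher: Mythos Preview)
Your reduction to $K_1\in\mathscr{K}_0(X)$ is fine (though you should cite part~\ref{superaddU} of Lemma~\ref{PropMuLC} rather than Lemma~\ref{propTMLC}, since $\mu$ is not yet known to be a topological measure). The real problem is the step you yourself flag as the ``main obstacle.'' You need the subadditivity-type estimate
\[
\mu\Bigl((U\setminus K_1)\sqcup\bigsqcup_{\beta\notin\mathcal J}B_\beta\Bigr)\;\le\;\mu(U\setminus K_1)+\sum_{\beta\notin\mathcal J}\mu(B_\beta),
\]
or equivalently the continuity-from-above statement $\mu\bigl(U\sqcup\bigsqcup_{\beta\notin\mathcal J}B_\beta\bigr)\to\mu(U)$ as $\mathcal J$ exhausts. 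Neither is available: topological measures are not subadditive, and the decreasing net of open sets $U_{\mathcal J}$ need not satisfy $\mu(U_{\mathcal J})\to\mu(\bigcap U_{\mathcal J})$. Your sketch---take $D\in\mathscr{K}_0(X)$ inside the left-hand open set and ``enlarge by finitely many additional $B_\beta$'s''---does not work, because a single connected component of $D$ can straddle $U\setminus K_1$ and several $B_\beta$'s simultaneously (indeed infinitely many, since the $B_\beta$'s may accumulate inside $\widetilde U$), so there is no way to split $\mu(D)$ as a sum over the two pieces. Applying Lemma~\ref{step2a} with $D$ (or $D$ together with finitely many $B_\beta$'s) inside $\widetilde U$ only reproduces the identity you already have and does not yield the needed bound.

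The paper avoids this trap entirely. Instead of passing to $\widetilde U$ and trying to peel off the infinitely many holes $B_\beta$, it first approximates $U$ from inside by a compact connected $D$ (with $K\subseteq W\subseteq D\subseteq U$, $\mu(U)-\mu(W)<\epsilon$), then uses the geometric Lemma~\ref{LeCleverSet} to produce a single compact set $C$ such that $U\sqcup C$ is open. This makes it possible to enclose $\widetilde D$ in a bounded open \emph{solid} set $W'\subseteq U\sqcup C$ via Lemma~\ref{LeCsInside}, and then Lemma~\ref{step2a} applies once, to $W'$, with no infinite tail to control. The key device is Lemma~\ref{LeCleverSet}; without something of that kind, the passage from~\ref{step2a} to an arbitrary $U\in\mathscr{O}_c^*(X)$ with infinitely many bounded complementary components does not go through.
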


\begin{proof}
Using part \ref{regularityLC} of Lemma \ref{PropMuLC} and  Lemma \ref{LeConLC}  
choose sets $ W \in \mathscr{O}_{c}^{*}(X)$ and $ D \in \mathscr{K}_{c}(X)$ such that 
\begin{align} \label{WD}
 K \subseteq W \subseteq D \subseteq U \mbox{   and   } \mu(U) - \mu(W) < \epsilon. 
\end{align}
Let $B$ be the union 
of bounded components of $ X \setminus U$ and let the open set $V$ 
be the union of bounded components of $ X \setminus D$. Set 
$C = B \cap V.$
By Lemma \ref{LeCleverSet}
$C$ is compact and $ U \sqcup C$ is open.
The solid hull $\widetilde D= D  \sqcup V$. 
Then by part \ref{superaddF} of Lemma \ref{PropMuLC}
$\mu(D) + \mu(V) \le \mu(\widetilde D)$. 
By Lemma \ref{PrSolidHuLC}
$V \subseteq \widetilde D  \subseteq \widetilde U  = U \sqcup B$, so 
$ V \subseteq  U \sqcup (B \cap V) = U \sqcup C. $
It follows that
$ K \sqcup C \subseteq D \sqcup V = \widetilde D \subseteq U \sqcup C. $
Since $U \sqcup C$ is open, by Lemma \ref{LeCsInside}  we may find $ W' \in \mathscr{O}_{s}^{*}(X)$ such that 
\begin{eqnarray} \label{sha} 
K \sqcup C \subseteq \widetilde D \subseteq W' \subseteq U \sqcup C.
\end{eqnarray}
Then 
\begin{eqnarray} \label{W'} 
W' \setminus(K \sqcup C) \subseteq U \setminus K.
\end{eqnarray}
According to part \ref{CoAppr} of Lemma \ref{PropMuLC}, pick $K_1 \in \mathscr{K}_{0}(X)$ such that 
\begin{eqnarray} \label{K1}
K \sqcup C \subseteq K_1 \subseteq W'  \mbox{   and    } \mu(K_1) \le \mu(K \sqcup C) + \epsilon.
\end{eqnarray}
By Lemma \ref{step2a}, $\mu(W') = \mu(W' \setminus K_1) + \mu(K_1)$.
Now using (\ref{WD}), Definition \ref{la1LC}, 
(\ref{sha}),  (\ref{K1}), (\ref{W'}), additivity on $\mathscr{O}(X)$ and finite additivity of 
$\mu$ on $\mathscr{K}(X)$ in Lemma \ref{PropMuLC}  we have:  
\begin{align*}
\mu(U) - \epsilon  &< \mu(W) \le \mu(D)= \mu(\widetilde D) - \mu(V) 
\le  \mu(W') - \mu(C)  \\
&= \mu(W' \setminus K_1) + \mu(K_1) - \mu(C) 
\le \mu(W'  \setminus (K \sqcup C)) + \mu(K \sqcup C) + \epsilon - \mu(C) \\
&\le \mu(U \setminus K)  + \mu(K) + \mu(C) -\mu(C) + \epsilon
=\mu(U \setminus K)  + \mu(K)  + \epsilon.
\end{align*}
Thus, $\mu(U)  \le \mu(U \setminus K)  + \mu(K)$. 
The opposite inequality is by part \ref{superaddU} of Lemma \ref{PropMuLC}. 
\end{proof}

\begin{lemma} \label{step4}
If $K \subseteq U, \ K \in \mathscr{K}(X), \ U \in  \mathscr{O}^{*}(X)$ then
$ \mu(U) = \mu(K) + \mu(U \setminus K). $
\end{lemma}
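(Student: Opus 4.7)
The plan is to reduce to the connected case already handled in Lemma \ref{step3} when $X$ is noncompact, and to argue directly from Lemma \ref{AddXcomp} when $X$ is compact. First, for compact $X$, I would apply Lemma \ref{AddXcomp} twice: once to get $\mu(X) = \mu(X \setminus U) + \mu(U)$, and once to the closed set $F = K \sqcup (X \setminus U)$, whose complement is the open set $U \setminus K$, obtaining $\mu(X) = \mu(F) + \mu(U \setminus K)$. Since $K$ and $X \setminus U$ are disjoint compact sets, part \ref{CloFinAddLC} of Lemma \ref{PropMuLC} gives $\mu(F) = \mu(K) + \mu(X \setminus U)$, and subtracting the two displays yields $\mu(U) = \mu(K) + \mu(U \setminus K)$.

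For noncompact $X$, I would decompose $U$ into its connected components $U = \bigsqcup_{i \in I} U_i$. Local connectedness makes each $U_i$ open, and boundedness of $U$ places each $U_i$ in $\mathscr{O}_c^{*}(X)$. Compactness of $K$ ensures that $K$ meets only finitely many components, which I relabel $U_1, \ldots, U_n$. For each such $i$, set $K_i = K \cap U_i = K \setminus \bigcup_{j \ne i} U_j$; this is closed in $K$, hence compact, and $K = \bigsqcup_{i=1}^{n} K_i$.

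Now Lemma \ref{step3} applies to each pair $K_i \subseteq U_i$, giving
\begin{equation*}
\mu(U_i) = \mu(K_i) + \mu(U_i \setminus K_i), \qquad i = 1, \ldots, n.
\end{equation*}
Summing these and adding $\sum_{i \in I \setminus \{1, \ldots, n\}} \mu(U_i)$ to both sides, and then using additivity of $\mu$ on disjoint open sets (part \ref{AddBox} of Lemma \ref{PropMuLC}) to identify
\begin{equation*}
\mu(U) = \sum_{i \in I} \mu(U_i), \qquad \mu(U \setminus K) = \sum_{i=1}^{n} \mu(U_i \setminus K_i) + \sum_{i \in I \setminus \{1, \ldots, n\}} \mu(U_i),
\end{equation*}
together with finite additivity of $\mu$ on disjoint compact sets (part \ref{CloFinAddLC}) for $\mu(K) = \sum_{i=1}^{n} \mu(K_i)$, delivers the required equality.

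The main technical point, rather than an obstacle, is verifying that $U \setminus K$ really does decompose as the disjoint union of the open sets $U_i \setminus K_i$ (for $i \le n$) and $U_i$ (for $i > n$), and that the manipulation of the possibly infinite sum $\sum_{i \in I} \mu(U_i)$ is legitimate. The latter is automatic: since $U$ is bounded, part \ref{finiteness} of Lemma \ref{PropMuLC} gives $\mu(U) < \infty$, so the series converges absolutely and the algebraic rearrangements above are unconditional.
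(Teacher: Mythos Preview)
Your proof is correct and, for noncompact $X$, follows exactly the paper's argument: decompose $U$ into connected components, apply Lemma~\ref{step3} on each component meeting $K$, and reassemble using additivity on open sets (part~\ref{AddBox}) and finite additivity on compact sets (part~\ref{CloFinAddLC}).

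The only difference is that you also supply a separate argument for compact $X$ via Lemma~\ref{AddXcomp}, whereas the paper's proof of Lemma~\ref{step4} tacitly treats only the noncompact case (since it invokes Lemma~\ref{step3}, which is stated for noncompact $X$). In the paper's overall logic this is harmless---Lemmas~\ref{step4} and~\ref{step5} are only used in the noncompact branch of Theorem~\ref{extThLC}, the compact branch being handled directly by Lemma~\ref{AddXcomp}---but your version makes the lemma self-contained as stated.
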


\begin{proof}
Let $ U = \bigsqcup_{i \in I} U_i$ be the decomposition of $U$ into connected components, and 
let $I'$ be a finite subset of $I$ such that $ K \subseteq \bigsqcup_{i \in I'} U_i$. For $i \in I'$ let
$K_i = K \cap U_i  \in \mathscr{K}(X)$ and let $K = \bigsqcup_{i \in I'} K_i$.
By Lemma \ref{step3}  we know that 
\begin{eqnarray} \label{A} 
\mu(K_i)  + \mu(U_i \setminus K_i) = \mu(U_i) \ \ \ \ \   \mbox{for  each} \ \  i \in I'.
\end{eqnarray}
Then  using finite additivity of $\mu$ on $\mathscr{K}(X)$ and additivity of $\mu$ on 
$\mathscr{O}(X)$ in Lemma \ref{PropMuLC}, and (\ref{A})  we have:
\begin{align*}
\mu(K) &+ \mu(U \setminus K) = \mu(\bigsqcup_{i \in I'}  K_i)  + \mu (U \setminus \bigsqcup_{i \in I'}  K_i) 
= \sum_{i \in I'} \mu(K_i) + \sum_{i \in I'} \mu(U_i \setminus K_i) + \sum_{i \in I \setminus I'} \mu(U_i) \\
&= \sum_{i \in I'} \mu(U_i) + \sum_{i \in I \setminus I'} \mu(U_i) = \sum_{i \in I} \mu(U_i) = \mu(U).
\end{align*} 
\end{proof} 

\begin{lemma} \label{step5}
If $K \subseteq U, \ K  \in \mathscr{K}(X), \ U \in  \mathscr{O}(X)$  then
$ \mu(U) = \mu(K) + \mu(U \setminus K). $
\end{lemma}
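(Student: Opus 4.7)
The plan is to reduce to the bounded case already treated in Lemma \ref{step4}. The inequality $\mu(K) + \mu(U \setminus K) \le \mu(U)$ is immediate from part \ref{superaddU} of Lemma \ref{PropMuLC}, since $U \setminus K$ is open (because $K$ is compact, hence closed) and disjoint from $K$, with $(U \setminus K) \sqcup K \subseteq U$.

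For the reverse inequality, I would use inner regularity on open sets from part \ref{regularityLC} of Lemma \ref{PropMuLC}:
\begin{equation*}
\mu(U) = \sup\{\mu(C) : C \in \mathscr{K}(X),\ C \subseteq U\}.
\end{equation*}
Fix an arbitrary compact $C \subseteq U$ and consider $C \cup K$, which is compact and contained in $U$. By Lemma \ref{easyLeLC}, choose $V \in \mathscr{O}^{*}(X)$ with $C \cup K \subseteq V \subseteq \overline{V} \subseteq U$. Since $V$ is bounded and $K \subseteq V$, Lemma \ref{step4} gives $\mu(V) = \mu(K) + \mu(V \setminus K)$. From $V \setminus K \subseteq U \setminus K$ and monotonicity (part \ref{monotoneLC} of Lemma \ref{PropMuLC}) we obtain
\begin{equation*}
\mu(C) \le \mu(V) = \mu(K) + \mu(V \setminus K) \le \mu(K) + \mu(U \setminus K).
\end{equation*}
Taking the supremum over all compact $C \subseteq U$ yields $\mu(U) \le \mu(K) + \mu(U \setminus K)$.

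This supremum argument handles both $\mu(U) < \infty$ and $\mu(U) = \infty$ uniformly: in the latter case, the chain of inequalities forces $\mu(U \setminus K)$ to be infinite (recall $\mu(K) < \infty$ by compact-finiteness, part \ref{finiteness} of Lemma \ref{PropMuLC}), so the equality still holds in $[0, \infty]$. There is no real obstacle here; the work was already done in Lemma \ref{step4} (where bounded connected open $U$ was handled via Lemma \ref{LeCleverSet}) and its extension Lemma \ref{step4} to arbitrary bounded open $U$. The only minor subtlety is the $\infty$-valued case, which is dispatched automatically by taking a supremum over compact subsets.
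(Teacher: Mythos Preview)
Your proof is correct and follows essentially the same approach as the paper: reduce to the bounded case via Lemma~\ref{easyLeLC} and Lemma~\ref{step4}, use monotonicity to compare $V\setminus K$ with $U\setminus K$, and obtain the reverse inequality from part~\ref{superaddU} of Lemma~\ref{PropMuLC}. The only difference is cosmetic: the paper splits into the cases $\mu(U)<\infty$ and $\mu(U)=\infty$, whereas your single supremum argument handles both at once (your parenthetical remark about Lemma~\ref{LeCleverSet} actually pertains to Lemma~\ref{step3}, not Lemma~\ref{step4}, but this does not affect the argument).
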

 
\begin{proof}
First assume that $ \mu(U) < \infty$.
Given $\epsilon >0$ by Definition \ref{muLC} find $C \in \mathscr{K}(X) $ such that $ K \subseteq C$ and $\mu(U) - \mu(C) < \epsilon $.
Using  Lemma \ref{easyLeLC} find 
$ V \in \mathscr{O}^{*}(X)$  such that 
$ K \subseteq C \subseteq V \subseteq U.$ 
By Lemma \ref{step4} $ \mu(V) = \mu(V \setminus K)  + \mu(K)$.
Then using monotonicity of $\mu$ in Lemma \ref{PropMuLC} we see that
\begin{align} \label{muCVK}
  \mu(U) - \epsilon <  \mu(C) \le \mu(V) =  \mu(V \setminus K)  + \mu(K) \le \mu(U \setminus K) + \mu(K).
\end{align}
Therefore, $ \mu(U)  \le \mu(U \setminus K) + \mu(K)$.
The opposite inequality is part  \ref{superaddU} of Lemma \ref{PropMuLC}. 
Therefore,
$ \mu(U)  = \mu(U \setminus K) + \mu(K)$ if $ \mu(U) < \infty.$
Now assume $ \mu(U) = \infty$.
For $n \in \mathbb{N}$ choose $ C \in \mathscr{K}(X)$ such that $ K \subseteq C$ and $ \mu(C) > n$.
By Lemma \ref{easyLeLC} find $ V \in \mathscr{O}^{*}(X)$  such that 
$ K \subseteq C \subseteq V \subseteq U.$
Using again (\ref{muCVK}) we have: 
$ n < \mu(C) \le \mu(V \setminus K) + \mu(K) \le \mu(U \setminus K) + \mu(K),  $
i.e. $n - \mu(K) \le \mu(U \setminus K)$.
Since $ \mu(K) \in \mathbb{R}$ by part \ref{finiteness} of Lemma \ref{PropMuLC}, 
it follows that $ \mu (U \setminus K) = \infty$, and $ \mu(U \setminus K) + \mu(K) = \mu(U)$.
\end{proof}

\begin{remark}
Our proof of Lemma \ref{step3} is close to that of \cite[Lemma 5.9]{Aarnes:LC}. We would like to point out that the part related to 
Lemma \ref{step2a} is the lengthiest and technically most difficult in the entire \cite{Aarnes:LC} as  
in involves  \cite[Lemma 5.8, Proposition 4.1, Lemma 4.3]{Aarnes:LC}
as well as lengthy adaptations of arguments from \cite{Aarnes:ConstructionPaper}.
\end{remark}

\begin{theorem} \label{extThLC}
Let $X$ be locally compact, connected, locally connected.
A solid-set function on $X$ extends uniquely to a compact-finite topological measure on $X$.
\end{theorem}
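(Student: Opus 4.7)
The plan has two phases: first verify that the set function $\mu$ of Definition \ref{muLC} is a compact-finite topological measure extending $\lambda$, then deduce uniqueness from the regularity properties of any such extension. Three of the four conditions of Definition \ref{TMLC} are already in hand from the work of Sections \ref{ExtBssKc}--\ref{ExttoTM}: inner regularity on open sets \ref{TM2} is part \ref{regularityLC} of Lemma \ref{PropMuLC}, outer regularity on closed sets \ref{TM3} is the very definition of $\mu$ on $\mathscr{C}(X)$, and compact-finiteness together with $\mu|_{\mathscr{A}_s^*(X)} = \lambda$ are parts \ref{finiteness} and \ref{extla2} of Lemma \ref{PropMuLC}. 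For the remaining finite additivity condition \ref{TM1}, I would invoke the fourth part of Proposition \ref{PrFinAddLC}: its hypothesis \ref{usl1} (open additivity) is part \ref{AddBox} of Lemma \ref{PropMuLC}, and its hypothesis \ref{usl2} ($\mu(U) = \mu(K) + \mu(U \setminus K)$ whenever $K \in \mathscr{K}(X)$, $U \in \mathscr{O}(X)$, $K \subseteq U$) is exactly Lemma \ref{step5}, or Lemma \ref{AddXcomp} in the compact case. Proposition \ref{PrFinAddLC} then produces \ref{TM1}, completing existence.

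For uniqueness, I would take an arbitrary compact-finite topological measure $\nu$ on $X$ with $\nu|_{\mathscr{A}_s^*(X)} = \lambda$ and show $\nu = \mu$. Condition \ref{TM3} reduces the problem to showing $\nu = \mu$ on $\mathscr{O}(X)$; part \ref{CoRegulLC} of Lemma \ref{propTMLC} reduces it further to agreement on $\mathscr{K}_0(X)$; and finite additivity on compact sets reduces it further still to agreement on $\mathscr{K}_c(X)$. The key computation would then write $\widetilde K = K \sqcup \bigsqcup_{i \in I} B_i$ for each $K \in \mathscr{K}_c(X)$ via Lemmas \ref{SolidCompoLC} and \ref{PrSolidHuLC}, with $\widetilde K \in \mathscr{K}_s(X)$ and each $B_i \in \mathscr{O}_s^*(X)$. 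Applying \ref{TM1} to the decomposition of the compact set $\widetilde K$ into the compact set $K$ and the open set $\widetilde K \setminus K$, together with open additivity from part \ref{smooth} of Lemma \ref{propTMLC}, yields
\begin{equation*}
\nu(K) = \nu(\widetilde K) - \sum_{i \in I} \nu(B_i) = \lambda(\widetilde K) - \sum_{i \in I} \lambda(B_i) = \lambda_1(K) = \mu(K),
\end{equation*}
where the second equality uses the hypothesis on $\nu$ and the last invokes Definition \ref{la1LC} and part \ref{mula1} of Lemma \ref{PropMuLC}.

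The main obstacle in the whole development is not this final assembly but rather the extraction of condition \ref{usl2} of Proposition \ref{PrFinAddLC}, which unfolds through the technically delicate chain Lemma \ref{step2a} through Lemma \ref{step5}; once that has been cleared, Theorem \ref{extThLC} itself amounts to bookkeeping that ties together the extensions of Sections \ref{ExtBssKc}--\ref{ExttoTM}.
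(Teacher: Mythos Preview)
Your proposal is correct and follows essentially the same route as the paper. One small correction in the compact case: Lemma \ref{AddXcomp} yields only $\mu(X)=\mu(K)+\mu(X\setminus K)$, which is condition \ref{usl2k} of Proposition \ref{PrFinAddC}, not the full condition \ref{usl2} of Proposition \ref{PrFinAddLC}; accordingly the paper invokes Proposition \ref{PrFinAddC} when $X$ is compact (the chain Lemma \ref{step3}--Lemma \ref{step5} is stated only for $X$ noncompact, since Lemma \ref{step3} begins ``If $X$ is noncompact''). With that adjustment, your existence argument and your uniqueness argument via $\mathscr{K}_c(X)\to\mathscr{K}_0(X)\to\mathscr{O}(X)$ match the paper's proof line by line.
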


\begin{proof}
Definitions \ref{la1LC}, \ref{la2LC} and \ref{muLC} extend solid-set function 
$\lambda$ to a set function $\mu$. We shall show that $\mu$ is a topological measure.
Definition \ref{muLC} and part \ref{regularityLC} of  Lemma \ref{PropMuLC} 
show that $\mu$ satisfies \ref{TM2}  and \ref{TM3} of Definition \ref{TMLC}.
Proposition \ref{PrFinAddLC}, part  \ref{OpenFinAddLC} of Lemma \ref{PropMuLC}, and Lemma \ref{step5} 
show that $\mu$ is a topological measure if $X$ is noncompact. 
Proposition \ref{PrFinAddC},  part  \ref{OpenFinAddLC} of Lemma \ref{PropMuLC}, and Lemma \ref{AddXcomp}
show that $\mu$ is a topological measure if $X$ is compact. 
By part \ref{finiteness} of Lemma  \ref{PropMuLC} $\mu$ is compact-finite. 
To show the uniqueness of the extension suppose $\nu$ is a topological measure on $X$ such that $ \mu = \nu = \lambda$ on $ \mathscr{A}_{s}^{*}(X)$.
If $A \in \mathscr{K}_{c}(X)$ then by Definition \ref{solid hull} $A = \widetilde A \setminus (\bigsqcup_{s \in S} B_s)$, 
where $ \widetilde A, B_s \in \mathscr{A}_{s}^{*}(X)$, 
so from Definition \ref{la1LC} it follows that $\mu = \nu$ on  $ \mathscr{K}_{c}(X)$, and, hence, on $\mathscr{K}_{0}(X)$. 
From part \ref{CoRegulLC} of Lemma \ref{propTMLC} it then follows that $\mu = \nu$ on $\mathscr{O}(X)$, so $ \mu = \nu$.
\end{proof} 

\begin{remark} \label{extsumme}
We will summarize the extension procedure for obtaining a topological measure $\mu$ from a
solid-set function $\lambda$ on a locally compact, connected, locally connected space.
First, for a compact connected set $C$ we have:
$$ \mu(C) = \lambda(\widetilde C) - \sum_{i=1}^n \lambda(B_i), $$
where $\widetilde C$ is the solid hull of $C$ and $B_i$ (open solid sets) are bounded components of $X \setminus C$.  
Hence, when $X$ is compact  and $C$ is a closed connected set
$$ \mu(C) = \lambda(X) - \sum_{i=1}^n \lambda(B_i). $$
For $C \in  \mathscr{K}_{0}(X)$ of the form $C = \bsc_{i=1}^n C_i, \, C_i  \in  \mathscr{K}_{c}^{*}(X)$ 
we have:
$$ \mu (C) = \sum_{i=1}^n  \mu(C_i). $$
Finally,
$$ \mu(U)  = \sup\{ \mu(K) : \ K \subseteq U , \ K \in \mathscr{K}_{0}(X) \} \ \ \ \ \mbox{for an open set} \  U,$$ 
$$ \mu(F) = \inf \{ \mu(U): \  F \subseteq U, \ U \in \mathscr{O}(X) \} \ \ \ \ \mbox{for a closed set} \  F.$$ 
\end{remark}

\begin{theorem} \label{Tpart2}
If a solid-set function $\lambda$ is extended to a topological measure $\mu$ 
then the following holds: if $\lambda: \mathscr{A}_{s}^{*}(X) \rightarrow \{0,1\}$ then $\mu$  is also simple;
if $\sup \{ \lambda(K): \ K \in \mathscr{K}_{s}(X)\} = M < \infty$ then $\mu$ is finite and $ \mu(X) = M.$
\end{theorem}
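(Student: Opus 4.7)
The plan is to trace the $\{0,1\}$-valuedness (respectively, the bound $M$) through the three extension steps summarized in Remark~\ref{extsumme}, using the superadditivity provided by Lemma~\ref{PropSsfLC} and the inequalities of Lemma~\ref{Prla1LC}.

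For the first assertion, start with $C \in \mathscr{K}_{c}(X)$. The formula $\mu(C) = \lambda(\widetilde C) - \sum_{i} \lambda(B_i)$ exhibits $\mu(C)$ as a difference of values in $\{0,1\}$ subject to $\sum_i \lambda(B_i) \le \lambda(\widetilde C)$, which holds by Lemma~\ref{PropSsfLC} applied to the disjoint solid sets $B_i \subseteq \widetilde C$. If $\lambda(\widetilde C)=0$ every $\lambda(B_i)=0$, while if $\lambda(\widetilde C)=1$ at most one $\lambda(B_i)$ can equal $1$; either way $\mu(C) \in \{0,1\}$. To pass to $K = \bigsqcup_{i=1}^n C_i \in \mathscr{K}_{0}(X)$, invoke Lemma~\ref{LeConLC} with $U=X$ to obtain $D \in \mathscr{K}_{c}(X)$ with $K \subseteq D$. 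Part~\ref{pa2} of Lemma~\ref{Prla1LC} together with part~\ref{mula1} of Lemma~\ref{PropMuLC} then give
\[
\mu(K) = \sum_{i=1}^n \mu(C_i) = \sum_{i=1}^n \lambda_1(C_i) \le \lambda_1(D) = \mu(D) \in \{0,1\},
\]
so $\mu(K) \in \{0,1\}$ as a sum of zeros and ones bounded by $1$. Finally, $\mu(U) = \sup\{\mu(K) : K \in \mathscr{K}_{0}(X),\ K \subseteq U\}$ and $\mu(F) = \inf\{\mu(U) : F \subseteq U,\ U \in \mathscr{O}(X)\}$ are a supremum and an infimum of $\{0,1\}$-valued quantities, hence lie in $\{0,1\}$.

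For the second assertion, condition~\ref{regul} of Definition~\ref{DeSSFLC} forces $\lambda(U) \le M$ for every $U \in \mathscr{O}_{s}^{*}(X)$, so $\lambda \le M$ on all of $\mathscr{A}_{s}^{*}(X)$. By part~\ref{extla2} of Lemma~\ref{PropMuLC}, $\mu = \lambda$ on $\mathscr{K}_{s}(X)$, and by part~\ref{l8} of Lemma~\ref{propTMLC} (applicable since $X$ is connected, locally compact, locally connected)
\[
\mu(X) = \sup\{\mu(K) : K \in \mathscr{K}_{s}(X)\} = \sup\{\lambda(K) : K \in \mathscr{K}_{s}(X)\} = M,
\]
so $\mu(X) = M < \infty$ and $\mu$ is finite.

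The only genuine obstacle is the $\mathscr{K}_{0}(X)$ step of the first assertion, where $\mu(K) = \sum_i \mu(C_i)$ is a priori a sum of $n$ values in $\{0,1\}$ and might naively exceed $1$. Enclosing $K$ in a single compact connected $D$ (whose $\mu$-value is already constrained to $\{0,1\}$ by the connected case) and invoking the superadditivity of $\lambda_1$ on disjoint compact connected pieces forces the sum to be at most $\mu(D) \le 1$; the remaining steps are routine bookkeeping with the extension formulas from Remark~\ref{extsumme}.
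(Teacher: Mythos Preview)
Your proof is correct and follows essentially the same approach as the paper's: the paper's proof is a one-line reference to Remark~\ref{extsumme}, part~\ref{extla2} of Lemma~\ref{PropMuLC}, and part~\ref{l8} of Lemma~\ref{propTMLC}, and you have simply unpacked these references step by step. In particular, your handling of the $\mathscr{K}_{0}(X)$ case---enclosing $K$ in a single $D \in \mathscr{K}_c(X)$ and invoking the superadditivity in Lemma~\ref{Prla1LC}\ref{pa2} to cap the sum at $\mu(D)\le 1$---is exactly the substance hidden behind the paper's ``follows from Remark~\ref{extsumme}''.
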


\begin{proof}
Follows from Remark \ref{extsumme}, part \ref{extla2} of Lemma \ref{PropMuLC}, and part \ref{l8}  of Lemma \ref{propTMLC}. 
\end{proof}

\begin{theorem} \label{ExtUniq}
The restriction $\lambda$ of a compact-finite topological measure $\mu$ to $\mathscr{A}_{s}^{*}(X)$ is a solid-set function, and
$\mu$ is uniquely determined by $\lambda$. 
\end{theorem}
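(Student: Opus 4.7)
The approach is to verify that $\lambda := \mu|_{\mathscr{A}_{s}^{*}(X)}$ satisfies the four axioms (s1)--(s4) of Definition \ref{DeSSFLC}, and then to invoke the uniqueness clause of Theorem \ref{extThLC}. Real-valuedness of $\lambda$ is immediate: every $A \in \mathscr{A}_{s}^{*}(X)$ is bounded, so $\overline{A}$ is compact, and monotonicity (part \ref{l1} of Lemma \ref{propTMLC}) together with compact-finiteness of $\mu$ gives $\mu(A) \le \mu(\overline{A}) < \infty$.

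For axioms (s1)--(s3), the idea is to combine the regularity and finite additivity built into the definition of a topological measure with Lemma \ref{LeCsInside}, which sandwiches an arbitrary compact subset of a bounded open solid set between a bounded open solid set and a compact solid set, and dually sandwiches a compact solid set inside a bounded open solid neighborhood. Axiom (s1) follows by iterating \ref{TM1} on disjoint compact sets and then applying monotonicity. For (s2), \ref{TM2} already gives $\mu(U) = \sup\{\mu(K): K \subseteq U,\, K \in \mathscr{K}(X)\}$; for any such $K$, Lemma \ref{LeCsInside} produces $C \in \mathscr{K}_{s}(X)$ with $K \subseteq C \subseteq U$, and monotonicity gives $\mu(K) \le \mu(C)$, so the supremum is unchanged when restricted to compact solid sets. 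Axiom (s3) is dual: \ref{TM3} combined with part \ref{KWsol} of Lemma \ref{LeCsInside} (and its analogue for compact $X$) shows that the infimum of $\mu(U')$ over all open neighborhoods $U'$ of $K$ can already be realized by bounded open solid neighborhoods.

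The main obstacle is axiom (s4): the equality $\mu(A) = \sum_{i=1}^{n} \mu(A_i)$ whenever $A = \bigsqcup_{i=1}^{n} A_i$ with $A, A_i \in \mathscr{A}_{s}^{*}(X)$. The subtle point is that the pieces may mix compact solid sets with bounded open solid sets, whereas \ref{TM1} applies only to disjoint pairs from $\mathscr{K}(X) \cup \mathscr{O}(X)$ whose union again lies there. The plan is to group the pieces: let $V$ be the disjoint union of those $A_i$ that are open and $K$ the disjoint union of those that are compact. Then $V \in \mathscr{O}(X)$, $K \in \mathscr{K}(X)$, and $V \sqcup K = A \in \mathscr{K}(X) \cup \mathscr{O}(X)$, so a single application of \ref{TM1} yields $\mu(A) = \mu(V) + \mu(K)$. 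Iterating \ref{TM1} on disjoint compact sets gives $\mu(K) = \sum \mu(A_i)$ summed over the compact pieces, while finite additivity on $\mathscr{O}(X)$ (part \ref{smooth} of Lemma \ref{propTMLC}) gives $\mu(V) = \sum \mu(A_i)$ summed over the open pieces; combining these establishes (s4).

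For the uniqueness assertion, once $\lambda$ is known to be a solid-set function, Theorem \ref{extThLC} supplies a unique compact-finite topological measure extending $\lambda$. Since $\mu$ is itself such an extension, it must coincide with that unique one, and therefore $\mu$ is completely determined by its restriction $\lambda$ to $\mathscr{A}_{s}^{*}(X)$.
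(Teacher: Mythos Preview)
Your proposal is correct and follows essentially the same route as the paper: monotonicity and \ref{TM1} for (s1) and (s4), Lemma \ref{LeCsInside} combined with \ref{TM2} and \ref{TM3} for (s2) and (s3), and compact-finiteness for real-valuedness. Your treatment of (s4)---grouping the open pieces into $V$ and the compact pieces into $K$ and applying \ref{TM1} once to the pair---is exactly what the paper has in mind when it cites \ref{TM1} for \ref{solidparti}, only spelled out more carefully; and your appeal to Theorem \ref{extThLC} for the uniqueness clause makes explicit what the paper leaves implicit.
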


\begin{proof}
Let $\lambda$ be the restriction of $\mu$ to $ \mathscr{A}_{s}^{*}(X)$. 
Monotonicity of a topological measure (see  Lemma \ref{propTMLC}) and \ref{TM1} of Definition  \ref{TMLC}
show that $\lambda$ satisfies conditions \ref{superadd} and \ref{solidparti}
of Definition  \ref{DeSSFLC}. 
For $ U \in \mathscr{O}_{s}^{*}(X)$ by \ref{TM2} let $ K \in \mathscr{K}(X)$ be such that $\mu(U) - \mu(K) < \epsilon$ and by Lemma \ref{LeCsInside}
we may assume that $K \in \mathscr{K}_{s}(X)$. 
Part \ref{regul} of Definition  \ref{DeSSFLC} follows.  
Part  \ref{regulo} of Definition  \ref{DeSSFLC} follows from \ref{TM3} and Lemma \ref{LeCsInside}. 
Since $\mu$ is compact-finite, $\lambda$ is real-valued. Therefore, $\lambda$ is a solid-set function.
\end{proof}

\begin{remark}
When $X$ is compact our method of extending a solid-set function to a topological measure is different and simpler
than the original method in \cite{Aarnes:ConstructionPaper}.
\end{remark}

\begin{remark} \label{additionalPropMu}
Comparing values on semisolid or solid sets one may determine whether 
two topological measures (or two quasi-integrals) coincide. 
Lemma  \ref{PropSsfLC}, Lemma \ref{Prla1LC},  and Lemma \ref{PropMuLC} give us 
some additional properties of topological measures. For 
example, by part \ref{CloFinAddLC} of Lemma \ref{PropMuLC}, if a closed set $F$ and  a compact $K$ are disjoint,
then $\mu(F \sqcup K) = \mu(F)  + \mu(K)$. By part \ref{pa3} of  Lemma \ref{Prla1LC}, if 
$\bigsqcup_{i=1}^n A_i  \sqcup \bigsqcup_{s \in S} B_s =  A$, where 
$A, A_i \in \mathscr{A}_{ss}^{*}(X) \cup \mathscr{K}_{c}(X)$ 
and $B_s \in \mathscr{A}_{s}^{*}(X)$ with at most finitely many of $B_s \in \mathscr{K}_{s}(X)$,
then 
$ \sum_{i=1}^n \mu(A_i) + \sum_{s \in S} \mu(B_s) = \mu(A).$
\end{remark} 
 
\section{Irreducible partitions and genus of a space} \label{IrrPartInfo}

Let $X$ be a compact Hausdorff connected locally connected space.  $\asx = \osx \cup \csx$.

\begin{definition} \label{SsetPar}
A solid partition of a solid set $A$ is a finite disjoint collection of sets $ \{ A_i: \in I, \ A_i \in \asx \} $ such that $A = \bsc_{i \in I} A_i$.
\end{definition}
     
\begin{remark}                                                                                                                                                                                                                                                                                                                                                                                                                                                                                                                                                                                                                                                                                                                                                                                                                                                                                       
In \cite[Def. 2.1]{Aarnes:ConstructionPaper}
a solid partition of $X$ is a disjoint collection of solid sets $ \{ A_i \}_{i\in I}  \se \asx$ such that
$X = \bigsqcup_{i \in I} A_i$ and only finitely many of $A_i$'s are closed. By Lemma \ref{finiteSP} 
this definition and  Definition \ref{SsetPar} are consistent.
\end{remark}

If we take any solid set $A$ we immediately get a solid partition $\{ A,  X \sm A\}$ of $X$. Such partitions are called trivial. 
 
\begin{definition}\label{DeIrrPart}
Let  $ \{ A_i \}_{i\in I} $ be a solid partition of $X$. 
Let $I' = \{ i \in I:  \ A_i \in \ksx \}$. 
We say that $ \{ A_i \}_{i \in I} $  is an irreducible partition if 
$X \sm \bsc_{i \in J} A_i$ is connected
for any proper subset $ J \subset I'$. 
\end{definition}


\begin{example}
Let $X = \{ z \in \mathbb{C}: 1 \le |z| \le 2\}, C_1 = \{z \in X: 1 \le Re \,  z \le 2, Im \,  z = 0\}, \, C_2 = \{z \in X: -2 \le Re \, z \le -1, Im \,  z = 0\}, \, 
C_3 =  \{z \in X: 1 \le Im \,  z \le 2, Re \, z = 0\}, \, C_3 =  \{z \in X: -2 \le Im  \, z \le -1, Re \,  z = 0\}$.  
Then $C_1, C_2, C_3, C_4$ and the four components of 
$X \sm (C_1 \sc \ldots \sc C_4)$ constitute a nontrivial partition of $X$, which is not irreducible. The partition consisting of $C_1, C_2$ and the two 
components of $X \sm (C_1 \sc  C_2)$ is non-trivial irreducible. 
\end{example}

\begin{definition} \label{genus}
We define the genus of $X, \ g(X),$ to be the maximal value of $n-1$, where $n$ is the number of closed solid sets in an irreducible partition of $X$, 
where the max is taken over irreducible partitions of $X$ .
\end{definition} 

\begin{remark} \label{ReIrrPart}
(a1) If there  is only one open (closed) solid set  in a solid partition,  then there is 
only one closed  (open) solid set in this partition, and the partition is trivial. \\
(a2)
If  $ \{A_i\}_{i \in I}$ is a solid partition, and  $ |I'| = 2$ then  $ \{A_i\}_{i \in I}$ is an irreducible non-trivial partition. \\
(a3)
If  $ \{A_i\}_{i \in I}$ is a non-trivial irreducible partition then $| I \sm I'| \ge 2$. \\
(a4)
$g(X) =0$ if and only if every irreducible partition of $X$ is trivial. \\
(a5)
By \cite[Cor. 16]{Grubb:IrrPart} $g(X)+1$ is equal to the maximum number of components of a set of the form $V \cap W$ where $V$ and $W$ 
are open solid sets with $V \cup W = X$. \\
(a6)
By \cite[Cor. 15]{Grubb:IrrPart} in Definition \ref{genus} we may take open solid sets instead of closed solid sets. \\
(a7)
When $X$ is compact, connected, locally path connected $g(X) \le n(X)$ where $n(X)$ is the largest integer $n$ so that there is a surjective 
map from $ \pi_1(X)$ onto $FG(n)$. If $ \pi_1(X)$ is abelian, then $g(X) \le 1$. 
In particular, if $X$ is a topological group, $g(X) \le 1$.   See \cite{Grubb:Covering Spaces}, \cite{Grubb:IrrPart}.    
\end{remark}

\begin{remark}
We are particularly interested in the case $g(X) = 0$, which can also be described as follows: if the union of two open
solid sets in $X$ is the whole space, their intersection must be connected. 
Intuitively, $X$ does not have holes or loops.
In the case where $X$ is locally path connected, $g(X)=0$
if the fundamental group $\pi_1(X)$ is finite (in particular, if $X$ is 
simply connected). Knudsen \cite{Knudsen} was able to show that if 
$H^1(X) = 0 $ then $g(X) = 0$, and in the case of CW-complexes the converse also holds.
\end{remark}

\begin{definition} 
We say that a finite disjoint family $\{ C_j\}_{j \in J}  \se \csx$ generates a non-trivial
irreducible partition if there is $ J' \se J$ and a finite disjoint family $\{ U_i\}_{i \in I} \se \osx$ 
such that 
$\bsc_{j \in J'} C_j \sc \bsc_{i \in I} U_i =X$
is a non-trivial irreducible partition of $X$. 
\end{definition}

The next result is \cite[Lemma 2.1]{Aarnes:ConstructionPaper}.

\begin{lemma} \label{LeIrrPar}
Suppose $X \sm \bsc_{j \in J} C_j$ is disconnected  where $\{ C_j\}_{j \in J}  \se \csx$ 
is a finite disjoint family. Then   $\{ C_j\}_{j \in J}$ generates a non-trivial irreducible 
partition of $X$.  
\end{lemma}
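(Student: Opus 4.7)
The plan is to choose $J' \se J$ to be a \emph{minimal} subset with the property that $X \sm \bsc_{j \in J'} C_j$ is disconnected; such a $J'$ exists since $J$ itself has the property, and $J'$ is nonempty because $X$ is connected. Let $\{U_i\}_{i \in I}$ be the connected components of $X \sm \bsc_{j \in J'} C_j$; these are open by local connectedness of $X$, and $|I| \ge 2$ by the disconnectedness hypothesis. I will prove each $U_i$ is solid and that $I$ is finite, giving the desired irreducible partition.

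The first key step is to establish the adjacency property: $C_j \cap \overline{U_i} \ne \O$ for every $j \in J'$ and every $i \in I$. By the minimality of $J'$, the set $C_j \sc \bsc_{i \in I} U_i = X \sm \bsc_{k \in J' \sm \{j\}} C_k$ is connected. Because each $U_k$ is open, $\overline{U_{i_0}} \cap U_k = \O$ whenever $k \ne i_0$; so if additionally $C_j \cap \overline{U_{i_0}} = \O$, then $U_{i_0}$ would be a proper nonempty clopen subset of the connected set $C_j \sc \bsc_{i} U_i$ (proper since $C_j$ is nonempty and disjoint from $U_{i_0}$), a contradiction.

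With the adjacency in hand, I would show each $U_i$ is solid. Fix $i \in I$ and any $k_0 \in I$ with $k_0 \ne i$. By part (a) of Remark \ref{OpenComp}, $U_{k_0} \sc C_j$ is connected for every $j \in J'$, so $U_{k_0} \sc \bsc_{j \in J'} C_j = \bigcup_{j \in J'} (U_{k_0} \sc C_j)$ is connected because all pieces share $U_{k_0}$. The same argument for each $k \ne i$ shows every $U_k \sc \bsc_{j \in J'} C_j$ is connected, and these sets share the nonempty common set $\bsc_{j \in J'} C_j$; hence their union $X \sm U_i = \bsc_{k \ne i} U_k \sc \bsc_{j \in J'} C_j$ is connected, so $U_i$ is solid.

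Finally, Lemma \ref{finiteSP} applied to the decomposition $X = \bsc_{j \in J'} C_j \sc \bsc_{i \in I} U_i$ (the ambient set $X$ is solid, and only the finitely many $C_j$ are closed) shows $I$ is finite. Non-triviality follows from $|J'| \ge 1$ and $|I| \ge 2$, so the partition has at least three pieces, and irreducibility is exactly the minimal choice of $J'$: for any proper $K \subsetneq J'$, $X \sm \bsc_{j \in K} C_j$ is connected. The main obstacle is the adjacency step, where one must carefully exploit that distinct components $U_i$ are pairwise separated so that a missing adjacency forces one component to be clopen inside the connected set $C_j \sc \bsc_i U_i$.
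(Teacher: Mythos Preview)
Your proof is correct. The paper does not actually prove this lemma; it simply cites it as \cite[Lemma 2.1]{Aarnes:ConstructionPaper}, so there is nothing to compare against directly. Your argument---taking $J'$ minimal with the disconnection property, establishing the adjacency $C_j \cap \overline{U_i} \ne \O$ via the clopen-set contradiction, deducing solidity of each component $U_i$ through Remark~\ref{OpenComp}, and invoking Lemma~\ref{finiteSP} for finiteness of $I$---is the standard route and matches what one finds in Aarnes's original paper. One minor point worth making explicit: the nonemptiness of each $C_j$ for $j \in J'$ (used to conclude $U_{i_0}$ is proper) follows from minimality of $J'$, since removing an empty $C_j$ would not affect the disconnection.
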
 

\begin{example}
The same finite family $\{ C_j\}_{j \in J}  \se \csx$ may generate different irreducible partitions; moreover, the number of closed solid set in these 
partitions may be different. 
Consider $X = \{  z \in \mathbb{C}: | z | \le 7 \} \sm \big( \{  z \in \mathbb{C}: | z | < 1 \}  \sc  \{  z \in \mathbb{C}: | z-5 | <1 \} \big)$. 
Let $C_1 = \{z \in X: Re \, z =0,\  1 \le Im \,  z \le 7\},  C_2 = \{z \in X: Im \, z =0, \ -7 \le Re \,  z \le -1\},  C_3= \{z \in X: Im  \, z =0, \ 1 \le Re \,  z \le 4\},  
C_4 = \{z \in X: Im \,  z =0, \ 6 \le Re  \, z \le 7\}$. Then $C_1, C_2, C_3, C_4$ generate various irreducible partitions, for instance, consisting of 
(a) $C_1, C_3, C_4$ and open solid sets that are connected components of $X \sm (C_1 \sc C_3 \sc C_4)$;
(b) $C_2, C_3, C_4$ and open solid sets that are connected components of $X \sm (C_2 \sc C_3 \sc C_4)$;
(c) $C_1, C_2$ and open solid sets that are connected components of $X \sm (C_1 \sc C_2)$.
\end{example}

\begin{corollary} \label{genconn}
$g(X) =0$  if and only if  $X \sm \bigsqcup_{i=1}^n  C_i$ is connected
for any finite disjoint family $\{C_i\}_{i=1}^n $ of closed solid sets. 
\end{corollary}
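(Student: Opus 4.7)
The plan is to derive both directions almost directly from the machinery already assembled, using Lemma \ref{LeIrrPar} for one implication and the structural content of Remark \ref{ReIrrPart} for the other. Throughout, I will use Remark \ref{ReIrrPart}(a4), which states that $g(X)=0$ if and only if every irreducible partition of $X$ is trivial.

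For the forward implication, assume $g(X)=0$, and suppose for contradiction that there is a finite disjoint family $\{C_i\}_{i=1}^n \subseteq \csx$ for which $X \setminus \bigsqcup_{i=1}^n C_i$ is disconnected. Lemma \ref{LeIrrPar} immediately produces a non-trivial irreducible partition of $X$ generated by (a subfamily of) the $C_i$. This contradicts Remark \ref{ReIrrPart}(a4).

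For the reverse implication, assume every finite disjoint family $\{C_i\}_{i=1}^n$ of closed solid sets has connected complement. To show $g(X) = 0$, suppose to the contrary that $X$ admits a non-trivial irreducible partition $\bsc_{i \in I'} C_i \sc \bsc_{j \in I \sm I'} U_j = X$ with $C_i \in \csx$ and $U_j \in \osx$. By Remark \ref{ReIrrPart}(a1) applied in both directions, non-triviality forces $|I'| \ge 2$ and $|I \sm I'| \ge 2$. Then $\{C_i\}_{i \in I'}$ is a finite disjoint family of closed solid sets whose complement equals $\bsc_{j \in I \sm I'} U_j$. Because the $U_j$ are open in $X$ and pairwise disjoint, each is clopen in this subspace, so having at least two non-empty $U_j$'s makes the complement disconnected, contradicting the hypothesis.

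I do not expect a real obstacle here; the only point requiring care is the symmetric use of Remark \ref{ReIrrPart}(a1) to guarantee $|I'| \ge 2$ (so that the family $\{C_i\}_{i \in I'}$ is genuinely non-trivial and the disconnection argument via clopen summands applies), together with noting that the $n=0, 1$ cases of the hypothesis are vacuous (connectedness of $X$, respectively the definition of a closed solid set in the compact setting).
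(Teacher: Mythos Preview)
Your proof is correct and follows essentially the same approach as the paper: both directions are argued by contraposition/contradiction, invoking Lemma~\ref{LeIrrPar} together with Remark~\ref{ReIrrPart}(a4) for one direction, and Remark~\ref{ReIrrPart}(a1) for the other. Your version is slightly more explicit (spelling out why $\bigsqcup_{j} U_j$ is disconnected), but the underlying argument is identical; note that the condition $|I'|\ge 2$ you single out is true but not actually needed---only $|I\setminus I'|\ge 2$ is used to obtain the disconnection.
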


\begin{proof}
\noindent
($\Longleftarrow$ ) 
Suppose $g(X) \ge 1$. Then there exists a non-trivial irreducible
partition $\{ A_i \}_{i \in I}$ of $X$. By part (a1) of  Remark \ref{ReIrrPart}
$ X \sm \bigsqcup_{i \in I'} A_i$ is disconnected, where $I' = \{ i\in I: A_i \in \csx \}.$ 
($\Longrightarrow$ ) Suppose that $X  \sm \bigsqcup_{i=1}^n  C_i$ is disconnected.
By Lemma \ref{LeIrrPar} there is a non-trivial irreducible partition.
Then $g(X) \ge 1$ by  part (a4) of Remark \ref{ReIrrPart}. 
\end{proof} 
   
\section{Solid-set function: a different definition} \label{SSFAarnes}

When $X$ is compact we have another definition of a solid-set function:

\begin{definition} \label{ssfC}
A set function $\lambda: \asx\to\r$ is a solid-set function if
\begin{enumerate}
\item \label{superaddC}
$ \sum_{i\in I}\la(A_i)\leq\lambda(X) \ \, \mbox{if}  \ \,  \{A_i\}_{i\in I}\subseteq\asx \  \mbox{is a disjoint finite collection}. $
\item \label{regulC}
$\la(U)=\sup\{\la(C):C\in\csx,C\subseteq U\} \ \ \ \mbox{for} \ \ \ U\in\osx.$ 
\item \label{solidpartiC}
$ \sum_{i \in I} \la(A_i) = \la(X) \ \ \ \mbox{for an irreducible partition} \ \ \ \bsc_{i \in I} A_i = X.  $ \\
(In particular, $ \la(A) + \la(X \sm A) = \la(X)$ for any $ A \in \asx$.)
\end{enumerate}
\end{definition}

\begin{remark} \label{g0ssf}
An equivalent definition of a solid-set function may be obtained if one replaces the 
first condition in Definition \ref{ssfC} by the following condition:  
$ \sum_{i=1}^n \la(C_i) \le \la(C)$  if  $  \bsc_{i=1}^n C_i \se C, \ C, C_i \in \csx.$
This gives the original definition of a solid-set function from \cite{Aarnes:ConstructionPaper}.

Part \ref{regulC} in Definition \ref{ssfC} is equivalent to the following: 
$ \la(C)=\inf\{\la(U): C \se U, U \in \osx \}$ for  $C \in\csx. $

If $g(X) = 0$ then by Remark \ref{ReIrrPart} condition \ref{solidpartiC} 
in Definition \ref{ssfC} becomes:
$ \la(A) + \la(X \sm A) = \la(X)  \mbox{   for any   } A \in \asx.$
\end{remark}

\begin{lemma}\label{LePropSsfC}
For the set function $\la$ given by Definition \ref{ssfC} we have:
\begin{enumerate}
\item \label{laemp} 
$\la(\emptyset)=0.$
\item \label{lareg}
$\la(C) = \inf\{ \la(U): \ C \se U , \ U \in \osx\}$ for any $ C \in \csx.$
\item \label{lasupa}
If $ \bsc_{s \in S}  A_s \se A$ where $ A, A_s \in \asx $ then 
$\sum_{s \in S} \la(A_s) \le \la(A). $
\item \label{laAdd}
If $ \bsc_{s \in S}  A_s = A$ is a solid partition of $A$  then $\sum_{s \in S} \la(A_s) = \la(A). $
\end{enumerate}
\end{lemma}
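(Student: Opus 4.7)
The plan is to establish the four parts in the order (\ref{laemp}), (\ref{lasupa}), (\ref{lareg}), (\ref{laAdd}), using each to bootstrap the next. The first three follow by direct manipulation of the ``in particular'' clause of condition (\ref{solidpartiC}); the real work is concentrated in (\ref{laAdd}).

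For (\ref{laemp}), I apply the ``in particular'' clause with $A = X$ to obtain $\la(X) + \la(\emptyset) = \la(X)$, forcing $\la(\emptyset) = 0$. For (\ref{lasupa}), assuming first that $S$ is finite, I augment the disjoint family $\{A_s\}$ inside $A$ by adjoining $X \sm A$ (disjoint from each $A_s$ since $A_s \se A$) and apply (\ref{superaddC}) to get $\sum_s \la(A_s) + \la(X \sm A) \le \la(X)$; subtracting the identity $\la(A) + \la(X \sm A) = \la(X)$ from the ``in particular'' clause yields $\sum_s \la(A_s) \le \la(A)$. Infinite $S$ reduces to finite $S$ by taking the supremum over finite subsets.

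For (\ref{lareg}), I dualize via complementation. Given $U \in \osx$ with $C \se U$, the ``in particular'' identities give $\la(C) = \la(X) - \la(X \sm C)$ and $\la(U) = \la(X) - \la(X \sm U)$; since $X \sm U \in \csx$ with $X \sm U \se X \sm C \in \osx$, condition (\ref{regulC}) applied to $X \sm C$ delivers $\la(X \sm U) \le \la(X \sm C)$ and hence $\la(C) \le \la(U)$, so $\la(C) \le \inf\{\la(U) : C \se U, \, U \in \osx\}$. Conversely, given $\eps > 0$, (\ref{regulC}) produces $D \in \csx$ with $D \se X \sm C$ and $\la(D) > \la(X \sm C) - \eps$; then $U := X \sm D$ lies in $\osx$, contains $C$, and satisfies $\la(U) = \la(X) - \la(D) < \la(C) + \eps$, closing the infimum.

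For (\ref{laAdd}), the inequality $\le$ is immediate from (\ref{lasupa}). For the reverse, I proceed by induction on $|S|$. The base case $|S| = 1$ is trivial; for $|S| = 2$, I verify that the augmented partition $\{A_1, A_2, X \sm A\}$ of $X$ is irreducible by a routine case analysis on the open/closed types of $A, A_1, A_2$ (any configuration in which the two pieces of $A$ are both open or both closed collapses to the trivial one by connectedness, and each of the surviving configurations has every proper subfamily of closed pieces whose complement is either $X$, $A$, or $X \sm A_i$ for a solid $A_i$, hence connected). Condition (\ref{solidpartiC}) then gives $\la(A_1) + \la(A_2) + \la(X \sm A) = \la(X) = \la(A) + \la(X \sm A)$. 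The inductive step from $|S|-1$ to $|S| \ge 3$ ideally proceeds by locating some $A_s$ for which $A \sm A_s$ is still connected (hence in $\asx$), splitting $A = A_s \sqcup (A \sm A_s)$ via the $|S|=2$ case and then applying the hypothesis to the $(|S|-1)$-piece solid partition of $A \sm A_s$. The main obstacle is the configuration where no single removal leaves $A$ connected \emph{and} the augmented solid partition of $X$ itself fails to be irreducible; I expect to handle this by invoking Lemma \ref{LeIrrPar} to extract, from the closed-solid pieces of the augmented family, a non-trivial irreducible sub-partition of $X$, and then telescoping the resulting additivity identities with (\ref{lasupa}) and the ``in particular'' clause to recover $\sum_s \la(A_s) = \la(A)$.
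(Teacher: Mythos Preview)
Your handling of parts (\ref{laemp}), (\ref{lasupa}), and (\ref{lareg}) is correct and matches the paper (which dismisses (\ref{laemp}) and (\ref{lareg}) as ``easy'' and proves (\ref{lasupa}) exactly as you do).

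For part (\ref{laAdd}), there is a genuine gap in the inductive step. You write that once $A \sm A_s$ is connected it is ``hence in $\asx$'', but that is not enough: in the compact setting ``solid'' means connected \emph{and} co-connected, and you also need $A \sm A_s$ to be open or closed. Concretely, take $A$ open and $A_s$ closed; then $A \sm A_s$ is open and may well be connected, yet $X \sm (A \sm A_s) = (X \sm A) \sqcup A_s$ is a disjoint union of two closed solid sets and has no reason to be connected. So the peel-off move does not in general produce a solid partition $A = A_s \sqcup (A \sm A_s)$ to which your $|S|=2$ case applies. Your acknowledged fallback via Lemma~\ref{LeIrrPar} is the right instinct, but as written it is a description of a hope rather than an argument: you still need to say precisely which disjoint family of closed solid sets you feed into Lemma~\ref{LeIrrPar}, why the resulting open pieces $U_i$ carry solid partitions with strictly smaller inductive parameter, and how the identities recombine.

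The paper avoids all of this by first reducing to $A = X$: since $\bsc_{s \in S} A_s = A$ is a solid partition of $A$ iff $(X \sm A) \sqcup \bsc_{s \in S} A_s = X$ is a solid partition of $X$, and since the desired identity is equivalent (via the ``in particular'' clause) to $\la(X \sm A) + \sum_s \la(A_s) = \la(X)$, one may assume $A = X$ throughout. The paper then inducts not on $|S|$ but on the number $|J|$ of \emph{closed} pieces. For $|J| \le 2$ the partition is trivial or irreducible by Remark~\ref{ReIrrPart}; for $|J| \ge 3$ one feeds $\{C_j\}_{j \in J}$ into Lemma~\ref{LeIrrPar}, obtaining a non-trivial irreducible partition $X = \bsc_{j \in J'} C_j \sqcup \bsc_i U_i$ with $|J'| \ge 2$. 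Each $U_i$ is then a disjoint union of original pieces with at most $|J| - 2$ closed ones, so the augmented partition $(X \sm U_i) \sqcup (\text{pieces in }U_i) = X$ has at most $|J|-1$ closed sets and the induction hypothesis gives $\la(U_i) = \sum_{\text{pieces in }U_i} \la(\cdot)$; summing and applying condition~(\ref{solidpartiC}) to the irreducible partition finishes the proof. This route bypasses the co-connectedness issue entirely and makes the decrease in the inductive parameter transparent.
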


\begin{proof} 
Parts (\ref{laemp})  and (\ref{lareg}) are easy. In part (\ref{lasupa})
it is enough to assume that $S$ is finite, since 
$\sum_{s \in S } \la(A_s) = 
\sup \{ \sum_{s \in S'} \la(A_s) : \ S' \se S,  \ S' \mbox{  is finite } \}$.
Then $(\bsc_{s \in S} A_s) \sc (X \sm A) $ is a finite disjoint collection of solid sets.
By Definition \ref{ssfC} $ \sum_{s \in S} \la(A_s)  + \la(X \sm A) \le \la(X) = \la(A) - \la(X \sm A)$, and we obtain the result.  \\
(\ref{laAdd}).
Since 
$ \bsc_{s \in S}  A_s = A$
is a solid partition of $A$ iff 
$ (X \sm A) \sc  \bsc_{s \in S}  A_s = X$ is a solid partition of $X$, 
it is enough to prove the statement for solid partitions of $X$.
Let 
$ \bsc_{j \in J} C_j \sc \bsc_{s  \in S} V_s = X, \ \ C_j \in \csx, \ V_s \in \osx$
be a solid partition  of $X$. $J$ is finite, and the proof is by induction on $|J|$. 
For $|J|=1$ or $|J|=2$  the result holds by 
Remark \ref{ReIrrPart} and Definition \ref{ssfC}. Assume that $|J| \ge 3$ and that the result 
holds for any solid partition of $X$ with less then $|J|$ closed solid sets.
Note that  $|S| \ge 2$ by Remark \ref{ReIrrPart}, i.e. $X \sm \bsc_{j \in J} C_j$ is disconnected. 
Obtain by Lemma \ref{LeIrrPar} a non-trivial irreducible partition of $X$
\begin{align} \label{st1}
 X = \bsc_{j \in J'} C_j \sc \bsc_{i \in I} U_i.
\end{align}  
Then 
$ \bsc_{i \in I} U_i = X \sm \bsc_{j \in J'} C_j = \bsc_{s \in S} V_s \sc \bsc_{j \in J \sm J'} C_j.$
By connectedness, 
for each $i \in I$
$$ U_i = \bsc_{j \in J_i} C_j \sc \bsc_{s \in S_i} V_s  \ \  \mbox{where}\   J_i = \{ j \in J \sm J': \ C_j \in U_i \}, \ S_i = \{ s \in S : \ V_s \in U_i \}. $$ 
The partition in (\ref{st1}) is non-trivial, so $|J'| \ge 2$, and then $|J_i| \le |J \sm J'| \le n-2$.
Then
$ ( X \sm U_i) \sc \bsc_{j \in J_i} C_j \sc \bsc_{s \in S_i} V_s  = X$
is a solid partition of $X$ with at most $n-1$ closed solid sets, so by induction hypothesis
$ \la(X \sm U_i) + \sum_{j \in J_i} \la(C_j)  + \sum_{s \in S_i} \la(V_s) 
= \la(X).$
By  part \ref{solidpartiC} of Definition \ref{ssfC}
\begin{eqnarray} \label{st3}
\la(U_i) =\sum_{j \in J_i} \la(C_j)  + \sum_{s \in S_i} \la(V_s).
\end{eqnarray}
Applying (\ref{st3}) and then Definition \ref{ssfC} to the irreducible partition 
in (\ref{st1})  we have:
$ \sum_{j \in J} \la(C_j) + \sum_{s \in S} \la(V_s)= 
\sum_{ j \in J'} \la(C_j) + 
\sum_{i \in I} \left( \sum_{ j \in J_i} \la(C_j)  + \sum_{s \in S_i} \la(V_s) \right) 
= \sum_{ j \in J'} \la(C_j) + \sum_{i \in I} \la(U_i) = \la(X).
$
\end{proof}

\begin{remark}
Lemma \ref{LePropSsfC} establishes the equivalence of Definition \ref{ssfC} and Definition \ref{DeSSFLC} when $X$ compact. 
\end{remark}

\section{Extension to a topological measure}  \label{Dan'sExtension}

We shall describe a way to extend a solid-set function $\la$ from Definition \ref{ssfC} to a topological measure. 
This extension procedure is related to one 
given by D. Grubb \cite{Grubb:Lectures}, which significantly simplified the original one presented in \cite{Aarnes:ConstructionPaper}.\\

\noindent
STEP 1.  We extend $\la$ to $ \la_1$ on $ \ccx \cup \assx$ $\la_1$  exactly as in formula (\ref{Xcomass}). 
Note that Lemma \ref{Prla1LC} holds for $\la_1$. \\

\noindent
STEP 2.
Let $\oox = \{ X \sm C: \ C \in \cox \}.$
We extend the set function $\la_1$ to a set function $\la_2$ defined on $\aox= \cox \cup \oox$. On $ \cox $ the function $\la_2$ is defined as in 
Definition \ref{la2LC}, and for  $ U \in \oox$  let
\begin{align} \label{UX}
 \la_2(U) = \la(X)  - \la_2(X \sm U).
\end{align}
We still have Lemma \ref{Lela2LC}, and we also have:
\begin{align} \label{Uoreg}
 \la_2(U) = \sup\{ \la_2(C) : \ C \se U, \ C \in \cox \} \ \ \ \mbox{for} \ \ U \in \oox.
\end{align}

\begin{proof}
Let $ U = \bsc_{i \in I} U_i$ be the decomposition into components of $U \in \oox$.
By Lemma \ref{DecUssN} components $U_i \in  \ossx$, and all but finitely many are in $\osx$. 
For $\epsilon>0$, choose a finite set $T\subseteq I$ for which
$\sum_{i\in I\setminus T}\la_1(U_i)<\epsilon/2$.
By part  \ref{bossEq} of Lemma \ref{Lela2LC}  for each $i\in T$  choose a set
$C_i \in \cox$ such that 
$ C_i\se U_i $ and $ \sum_{i\in T}\la_1(U_i) - \sum_{i\in T} \la_2(C_i) < \eps/2$.
Let  $C=\bsc_{i\in T} C_i \in \cox$. Then  $C \se U$ and
\begin{eqnarray} \label{vsp1}
\la_2(C)=\sum_{i\in T} \la_2(C_i)>\sum_{i\in T}\la_1(U_i)-\frac{\epsilon}{2}
\ge \sum_{i\in I}\la_1(U_i)-\epsilon.
\end{eqnarray}
Now writing 
$X\setminus U=\bsc_{j=1}^m F_j, \ \  F_j \in \ccx$ 
by part \ref{pa3} of Lemma \ref{Prla1LC} we have
\begin{align} \label{vsp2}
\la_1(X) = \sum_{j=1}^m \la_1(F_j) + \sum_{i \in I} \la_1(U_i).
\end{align}
Then by (\ref{vsp1}), (\ref{vsp2}) and (\ref{UX}) we have
$ \la_2(C) + \eps \ge \sum_{i\in I}\la_1(U_i) 
= \la_1(X)  - \sum_{j=1}^m \la_1(F_j)  = \la_2(U). $
\end{proof}

\noindent
We extend the set function $\la_2$ to a set function $\mu$ defined on $\ax$: 
for an open set $U$ define $\mu(U) $ as in Definition \ref{muLC}, and  for a closed set $C$  let 
\begin{align} \label{XCmu}
 \mu(C) = \mu(X) - \mu(X \sm C).
\end{align}

\begin{lemma} \label{PropMuC}
The set function $\mu$ satisfies the following properties:
\begin{enumerate}
\item \label{extlamb2C}
$\mu  = \la_2$ on $ \oox \cup \cox$, $\mu = \la$ on $\asx$, and $\mu$ is finite.
\item \label{monotoneC}
$\mu$ is monotone on open sets, $\mu$ is monotone on closed sets.
\item \label{regularityC}
$\mu(U) = \sup\{\mu(C): \ C \se U , \ C \in \cx \} , \ \ \ U  \in \ox$. \\
$\mu(C) = \inf\{\mu(U): \ C \se U , \ U \in \ox \} , \ \ \ C  \in \cx$.
\item\label{OpenFinAddC}
$ \mu$ is finitely additive on open sets.
\item \label{CloFinAddC}
$\mu$ is finitely additive on closed sets.
\item \label{FinAddC}
$\mu$ is finitely additive on $\ax$. 
\end{enumerate}
\end{lemma}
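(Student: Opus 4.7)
The plan is to establish the six properties in order, leaning on the fact that the definition of $\mu$ on closed sets is set up precisely so complementary computations will work. The linchpin is the identity $\mu(X) = \la(X)$: the inclusion $X \in \csx$ gives $\mu(X) \ge \la_2(X) = \la(X)$, while for any $K = \bsc_{i=1}^n K_i \in \cox$ Lemma \ref{Prla1LC}(ii) applied to $X$ yields $\la_2(K) = \sum_i \la_1(K_i) \le \la_1(X) = \la(X)$, so $\mu(X) \le \la(X)$. Finiteness of $\mu$ is then immediate. For part (\ref{extlamb2C}), the equality on $\oox$ is exactly equation (\ref{Uoreg}), and on $\cox$ it follows by computing $\mu(C) = \mu(X) - \mu(X \sm C) = \la(X) - \la_2(X \sm C) = \la_2(C)$ via (\ref{UX}). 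Since $\osx \se \oox$, $\csx \se \cox$ and $\la_2 = \la$ on $\asx$, this also gives $\mu = \la$ on $\asx$.

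Part (\ref{monotoneC}) is largely formal: monotonicity on open sets is immediate from the sup definition; monotonicity on closed sets follows by taking complements in $X$; and the same complementary trick produces the mixed monotonicity $\mu(C) \le \mu(U)$ whenever $C \in \cx$, $U \in \ox$, $C \se U$. For part (\ref{regularityC}), open-set inner regularity follows from the definition of $\mu(U)$, the identification $\la_2 = \mu$ on $\cox$, and Lemma \ref{LeCCoU} to enlarge an arbitrary closed $C \se U$ to a member of $\cox$ still in $U$; closed-set outer regularity is the very definition of $\mu$ on closed sets together with the complementary identity $\mu(U) + \mu(X \sm U) = \mu(X)$. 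Part (\ref{OpenFinAddC}) is a two-sided argument: for disjoint $U, V \in \ox$, any $K \in \cox$ with $K \se U \sc V$ splits by connectedness of its components into $K_U = K \cap U \in \cox$ and $K_V = K \cap V \in \cox$, so $\la_2(K) = \la_2(K_U) + \la_2(K_V) \le \mu(U) + \mu(V)$; the reverse inequality uses the opposite observation, that disjoint elements of $\cox$ sitting in $U$ and $V$ combine into one element of $\cox$ in $U \sc V$.

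For part (\ref{CloFinAddC}) I will exploit normality of the compact Hausdorff $X$. Given disjoint closed $C, D$ and $\eps > 0$, outer regularity produces open $U' \supseteq C$ and $V' \supseteq D$ with $\mu(U') < \mu(C) + \eps/2$ and $\mu(V') < \mu(D) + \eps/2$; normality allows us to replace them by disjoint open $U \se U'$, $V \se V'$ still containing $C,D$, so by (\ref{OpenFinAddC})
\[
\mu(C \sc D) \le \mu(U \sc V) = \mu(U) + \mu(V) \le \mu(U') + \mu(V') < \mu(C) + \mu(D) + \eps.
\]
Conversely, for any $W \in \ox$ with $C \sc D \se W$, normality furnishes disjoint open $U \se W$ and $V \se W$ with $C \se U$, $D \se V$, whence $\mu(C) + \mu(D) \le \mu(U) + \mu(V) = \mu(U \sc V) \le \mu(W)$; taking the infimum over $W$ finishes the argument. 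Finally, part (\ref{FinAddC}) is obtained by invoking Proposition \ref{PrFinAddC}: $\mu$ satisfies \ref{TM2} and \ref{TM3} by part (\ref{regularityC}), condition $\mu(X) = \mu(K) + \mu(X \sm K)$ for $K \in \cx$ is literally the definition of $\mu$ on closed sets, and finite additivity on disjoint open sets is part (\ref{OpenFinAddC}), so the proposition delivers \ref{TM1}, i.e., finite additivity on all of $\ax$.

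The step demanding the most care is part (\ref{CloFinAddC}): it is the only one that needs a genuine topological input (normality) beyond the purely formal complementary definition $\mu(C) = \mu(X) - \mu(X \sm C)$, and one must be careful to route each inequality through the right combination of regularity and disjoint open additivity. Everything else is either a direct computation or an immediate consequence of Proposition \ref{PrFinAddC} once regularity and one-sided additivities are in place.
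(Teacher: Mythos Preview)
Your proof is correct and follows essentially the same route as the paper: parts (1)--(5) match the paper's arguments (the paper simply cites Lemma~\ref{PropMuLC} for (4) and (5), which is the normality argument you spell out), and for (6) you invoke Proposition~\ref{PrFinAddC} directly, whereas the paper reproves that proposition's complementary computation inline --- a purely cosmetic difference. One small imprecision: in part (2) the ``same complementary trick'' does \emph{not} by itself yield the mixed inequality $\mu(C)\le\mu(U)$ for closed $C\subseteq U$ open (passing to complements just reduces it to the same mixed statement), but this is harmless since you only use mixed monotonicity in part (5), by which point part (3) has already established outer regularity and hence $\mu(C)=\inf\{\mu(W):C\subseteq W\}\le\mu(U)$.
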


\begin{proof}
(1.) Note that $\mu(U) = \la_2(U)$ for any $ U \in \oox$ by (\ref{Uoreg}). For  $C \in \cox$ then
$ \mu(C) = \mu(X) - \mu(X \sm C) = \la_2(X) - \la_2(X \sm C) = \la_2(C).$
Thus, $\mu = \la_2 $ on $ \oox \cup \cox$. In particular, $\mu = \la_2 = \la$ on $\csx$, 
and hence, $\mu = \la$ on solid sets; $\mu(X) = \la(X) < \infty$. 
(2.) Obvious. 
(3.) As in the proof of part \ref{regularityLC} of Lemma \ref{PropMuLC} we have
$\mu(U) = \sup\{\mu(K): \ K \se U , \ K \in \cx \}$. 
The second statement follows immediately from the first by definition of $\mu$. 
(4.) and (5.) are proved as in Lemma \ref{PropMuLC}. 
(6.) Suppose $A,B\in\ax$ are disjoint. By parts (\ref{OpenFinAddC}) and 
(\ref{CloFinAddC})  we may assume that $A, B$ are not both closed
or both open. We may also assume that $A$ and $A\sqcup B$ are not
both closed or both open. Then the sets
$A$ and $X\setminus (A\sqcup B)$ are both closed or both open, and disjoint.
Hence by part (\ref{OpenFinAddC}) or part (\ref{CloFinAddC}) 
$ \mu(A)+\mu(X\setminus (A\sqcup B))=\mu \left(A\sqcup  [X\setminus (A\sqcup B)] \right) =
\mu(X\setminus B), $
which by (\ref{XCmu}) gives
$ \mu(A)+\mu(X) - \mu(A \sc B)=\mu (X) - \mu(B)$, 
i.e.
$ \mu(A) + \mu(B) = \mu(A \sc B).$
\end{proof}

\begin{theorem}\label{ExtThC} 
Any solid-set function on $X$ extends uniquely to a topological measure on $X$.
\end{theorem}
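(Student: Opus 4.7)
The plan is to verify that the set function $\mu$ constructed in Section \ref{Dan'sExtension} through the three extension steps (from $\lambda$ to $\lambda_1$ on $\ccx\cup\assx$, then to $\lambda_2$ on $\cox\cup\oox$, then to $\mu$ on $\ax$) is the desired topological measure, and then to argue uniqueness directly from the structure of closed connected sets.

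First, I would check the defining axioms \ref{TM1}--\ref{TM3} of Definition \ref{TMLC}. Since $X$ is compact, $\mathscr{K}(X)=\cx$, so $\mathscr{K}(X)\cup\ox=\ax$; hence the finite additivity on $\ax$ given by part \ref{FinAddC} of Lemma \ref{PropMuC} is exactly \ref{TM1}. Both \ref{TM2} (inner regularity on open sets by compact sets) and \ref{TM3} (outer regularity on closed sets by open sets) are given by part \ref{regularityC} of Lemma \ref{PropMuC}. Together with part \ref{extlamb2C} of that lemma, which provides $\mu=\lambda$ on $\asx$, this shows that $\mu$ is a topological measure extending $\lambda$.

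For uniqueness, I follow the argument at the end of Theorem \ref{extThLC}. Suppose $\nu$ is any topological measure on $X$ with $\nu|_{\asx}=\lambda$. For any $C\in\ccx$, the complement $X\setminus C=\bigsqcup_{i\in I} B_i$ decomposes into components, each $B_i\in\osx$ by Lemma \ref{CmpCmplBdA} and Lemma \ref{SolidCompoLC}, and $I$ is finite by the compactness consideration (in the compact case $\widetilde C=X$). Applying finite additivity of $\nu$ on $\ax$,
\[
\nu(C)=\nu(X)-\sum_{i\in I}\nu(B_i)=\lambda(X)-\sum_{i\in I}\lambda(B_i)=\mu(C),
\]
by formula (\ref{Xcomass}). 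Finite additivity on $\ccx$ (part \ref{CloFinAddC} of Lemma \ref{PropMuC} applied to both $\mu$ and $\nu$) then gives $\mu=\nu$ on $\cox$. Inner regularity of both topological measures, combined with part \ref{CoRegulLC} of Lemma \ref{propTMLC}, extends the equality to $\ox$, and outer regularity extends it to $\cx$. Hence $\mu=\nu$ on $\ax$.

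The only nontrivial part is showing that the constructed $\mu$ satisfies the defining conditions of a topological measure, but essentially all of that work has already been packaged into Lemma \ref{PropMuC}; the proof above reduces to citing the correct parts of that lemma and observing the compactness-specific simplification $\mathscr{K}(X)=\cx$. The uniqueness half is straightforward once one notices that every closed connected set is determined by the values of $\lambda$ on the (solid) components of its complement together with $\lambda(X)$.
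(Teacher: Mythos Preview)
Your proof is correct and follows essentially the same approach as the paper, which also cites parts \ref{regularityC}, \ref{FinAddC}, \ref{extlamb2C} of Lemma \ref{PropMuC} for existence and defers to the uniqueness argument of Theorem \ref{extThLC}. One minor slip: you assert that the index set $I$ of components of $X\setminus C$ is finite, but a closed connected set in a compact space can have a complement with infinitely many components (think of the Hawaiian earring inside a disk); the identity $\nu(C)=\nu(X)-\sum_{i\in I}\nu(B_i)$ nonetheless holds because $\nu(X\setminus C)=\sum_{i\in I}\nu(B_i)$ by additivity on open sets (part \ref{smooth} of Lemma \ref{propTMLC}), after which finite additivity on $\ax$ gives $\nu(C)=\nu(X)-\nu(X\setminus C)$, and your argument proceeds unchanged.
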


\begin{proof}
Parts (\ref{regularityC}), (\ref{FinAddC}) and (\ref{extlamb2C}) of Lemma \ref{PropMuC}
show that that the set function $\mu$ is a topological  
measure that is an extension of a given solid-set function $\la$. 
The proof of uniqueness is similar to the one in Theorem \ref{extThLC}. 
\end{proof} 

\section{Examples for a compact space} \label{ExamplesC}

In the following examples the 
underlying compact space has genus 0. This means by Remark \ref{g0ssf}  that checking
the last condition of a solid-set function in Definition \ref{ssfC} is greatly simplified. 

\begin{example}[Aarnes circle measure] \label{Aatm}
Let $X$ be the unit circle and $B$ be the boundary of $X.$
Fix a point $p$ in $X \setminus B$.
Define $\mu $ on solid sets as follows:
$\mu (A) = 1$ if i) $B \subset A$ or
ii) $ p \in A $ and $A \cap B \ne  \O$.
Otherwise, we let $ \mu(A) = 0 $.
Then 
$ \mu $ is a solid-set function and hence
extends to a topological measure on $X$.
Note that $\mu$ is not a point mass. 
To demonstrate that $\mu $ is 
not a measure we shall show that $\mu$ is not
subadditive. 
Let $A_1$ be a closed solid set which is an arc that is a proper subset of $B$, 
$A_2$ be a closed solid set that is the closure of $B \sm A_1$, 
and $A_3 = X \setminus B$ be
an open solid subset of $X$. Then 
$X =  A_1 \cup A_2 \cup A_3, \  \mu(X) = 1 $, but 
$ \  \mu (A_1) + \mu(A_2) + \mu(A_3) = 0$.
\end{example}

\begin{example} \label{dscrAatm}
Another solid-set function (hence, a topological measure) is obtained if  in the 
Example \ref{Aatm} we take  $B$ to be any set of points of $X$, and $ p \in X \sm B$.
\end{example} 

\begin{example} \label{3pts2vtm}
Let $X$ be a sphere (or a square.)
Fix points $p_1, p_2, p_3$  in $X$.
Define $\mu $ on solid sets as follows:
$\mu (A) = 1$ if $A$ contains the majority of the three points, 
otherwise, $\mu(A) = 0$.
The resulting topological measure is non-subadditive,
since $ \mu (X) =1 $, and it is easy to
represent $X$ as a union of three overlapping
solid sets each of which contains exactly
one of the points $x,y,z$ and, hence, has measure 0.
Notice also that $\mu(A) =1$ for any connected set $A$ that contains at least 2 points, 
since any component of $X \sm A$ (a solid set by Lemma \ref{SolidCompoLC}) contains at most 1 point. 
\end{example}

\begin{example} \label{npts2vtm}
Let $X$ be a sphere
and let $ P= \{ p_1, \ldots, p_n \} $ be a set of $n$ distinct points in $X$, with $n$ an 
odd number. We define a solid-set function $\mu$ on $X$ by letting $\mu (A) = 1$ if $ A \cap P$
contains a majority of points of $P$; otherwise let $\mu (A) = 0.$ 
\end{example}

\begin{example}  \label{n+1vtm}
Let $X$ be the unit sphere and $ P= \{ p_1, \ldots, p_{2n+1} \} $ an odd-numbered subset of $X$.
If $ A \in \asx$ let $\sharp A =$ the number of points in $ A \cap P.$ For $ k = 0, \ldots, n$
let $ \mu (A) = k/n$ if $ \sharp A \in \{ 2k, 2k+1\}.$ Thus defined,  
$\mu$ is a solid-set function in $X.$
\end{example}

\begin{remark}
Example \ref{Aatm}, Example \ref{3pts2vtm}, Example \ref{n+1vtm} were the first examples of topological measures that are not 
(restrictions to $\ox \cup \cx$ of) regular Borel measures. They were presented by J. Aarnes in \cite{Aarnes:TheFirstPaper}, 
\cite{Aarnes:ConstructionPaper} and \cite{Aarnes:Pure}.  For more examples of topological measures on compact spaces see, for instance, 
\cite{AarnesRustad} and \cite{QfunctionsEtm}.
\end{remark}

\section{Examples for a locally compact space} \label{ExamplesLC}

When $X$ is locally compact,  the hardest condition in Definition  \ref{DeSSFLC} 
to verify is the condition \ref{solidparti} that deals with solid partitions. 
But, as we shall see in this section, it turns out that this condition holds trivially for spaces 
whose one-point compactification has genus $0$.
We denote by $\hat X$ the  one-point compactification of $X$.

\begin{lemma} \label{hatXsoli}
Let $X$ be locally compact noncompact and $\hat X$ be its one-point compactification.
If $A \in \mathscr{A}_{s}^{*}(X)$ then $A$ is solid in $\hat X$.
\end{lemma}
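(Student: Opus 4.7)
The plan is to verify the two compact-case solidity conditions directly: $A$ connected in $\hat X$, and $\hat X \setminus A$ connected in $\hat X$. Connectedness of $A$ in $\hat X$ is immediate, since $A$ is already connected as a subset of $X$ and the subspace topology on $X$ agrees with the topology inherited from $\hat X$. So the real work is to show that $\hat X \setminus A$ is connected.

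Write $\infty$ for the point at infinity, so $\hat X \setminus A = (X \setminus A) \cup \{\infty\}$. By Remark \ref{ReFinNoComp}, since $A$ is a bounded solid set in $X$, we may decompose
\[
X \setminus A = \bigsqcup_{i=1}^{n} E_i,
\]
where each $E_i$ is an unbounded connected component of $X \setminus A$. The key observation is that each $E_i$ has $\infty$ as a limit point in $\hat X$: indeed, a basic neighborhood of $\infty$ in $\hat X$ has the form $\hat X \setminus K$ for some compact $K \subseteq X$, and if some $E_i$ were disjoint from such a neighborhood then $E_i \subseteq K$, forcing $\overline{E_i}^X \subseteq K$ to be compact, contradicting that $E_i$ is unbounded. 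Hence $E_i \cup \{\infty\}$ is connected in $\hat X$ for each $i$ (the closure of a connected set is connected).

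Since the sets $E_i \cup \{\infty\}$, $i = 1, \ldots, n$, are connected and share the common point $\infty$, their union
\[
\hat X \setminus A = \bigcup_{i=1}^{n} \bigl( E_i \cup \{\infty\} \bigr)
\]
is connected by Remark \ref{OpenComp}(d). Together with connectedness of $A$, this establishes that $A$ is solid in $\hat X$ by the compact-space definition. The main obstacle, and really the only nontrivial point, is the argument that $\infty$ lies in the $\hat X$-closure of every unbounded component $E_i$; once that is in hand, the conclusion assembles itself from elementary connectedness facts already collected in Remark \ref{OpenComp}.
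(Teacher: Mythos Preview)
Your proof is correct and follows essentially the same approach as the paper: decompose $X \setminus A$ into its (unbounded) connected components, adjoin $\infty$ to each, and observe that the resulting connected sets share $\infty$ so their union $\hat X \setminus A$ is connected. The only difference is that you spell out explicitly why $\infty$ is a limit point of each unbounded $E_i$ (via the neighborhood base $\hat X \setminus K$), whereas the paper leaves the connectedness of $E_i \cup \{\infty\}$ as ``it is easy to see.''
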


\begin{proof}
Since $A $ is connected in $X$, it is also is connected in $\hat X$.
Let $X \setminus A = \bigsqcup_{i=1}^n B_i$ be the decomposition into connected components.
Each $B_i$ is an unbounded subset of $X$. 
We can write $\hat X \setminus A = \bigcup_{i=1}^n E_i$ where each $E_i = B_i \cup \{ \infty\}$.
It is easy to see that each $E_i$ is connected in $\hat X$.  Thus,  $\hat X \setminus A$ is connected,
and so  $A$ is solid in $\hat X$.
\end{proof}

\begin{lemma} \label{nosopart}
Let $X$ be a locally compact noncompact  space whose one-point compactification $\hat X$ has genus 0. 
If $A \in \mathscr{A}_{s}^{*}(X)$ then any solid partition of $A$ is the set $A$ itself.
\end{lemma}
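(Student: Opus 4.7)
The plan is to lift the hypothetical solid partition to $\hat X$ and squeeze it using the genus 0 hypothesis via Corollary \ref{genconn}. First I will pass to $\hat X$: assume $A \in \mathscr{A}_{s}^{*}(X)$ admits a solid partition $A = \bigsqcup_{i \in I} A_i$ with each $A_i \in \mathscr{A}_{s}^{*}(X)$. By Lemma \ref{hatXsoli} every bounded solid subset of $X$ is solid in the compact space $\hat X$, so $A$ and each $A_i$ are solid in $\hat X$. Since $A$ is solid in $\hat X$, the set $\hat X \setminus A$ is connected, and since $A$ itself is connected, $\hat X \setminus A$ is also solid in $\hat X$. Boundedness of $A$ in $X$ makes $\hat X \setminus A$ open in $\hat X$ when $A$ is closed in $X$, and closed in $\hat X$ when $A$ is open in $X$. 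This yields a disjoint decomposition $\hat X = (\hat X \setminus A) \sqcup \bigsqcup_{i \in I} A_i$ into solid pieces of $\hat X$, to which I will apply Corollary \ref{genconn}.

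If $A$ is closed in $X$, so $\hat X \setminus A$ is an open solid piece, I will apply Corollary \ref{genconn} to the subcollection of those $A_i$ that are closed. Their complement in $\hat X$ is the disjoint union of the remaining open $A_i$ together with the nonempty open set $\hat X \setminus A$; genus 0 forces this disjoint union of open sets to be connected, which is impossible unless every $A_i$ is closed. Then each $A_i$ is closed in $A$, and $A \setminus A_i = \bigsqcup_{j \ne i} A_j$ is a finite union of sets closed in $X$, so each $A_i$ is clopen in the connected set $A$, forcing $|I| = 1$.

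If instead $A$ is open in $X$, so $\hat X \setminus A$ is a closed solid piece, I will apply Corollary \ref{genconn} to the enlarged collection $\{A_i : A_i \text{ closed}\} \cup \{\hat X \setminus A\}$; its complement in $\hat X$ is the disjoint union of the open $A_i$, which by genus 0 must be connected, so there is at most one open piece. If there is none, the clopen-in-$A$ argument from the previous case again gives $|I| = 1$. The hard part will be ruling out the remaining subcase, where exactly one $A_i$, say $A_n$, is open and at least one $A_j$ ($j \ne n$) is closed. For this I will appeal to the definition of solid directly: the complement of $A_j$ in $X \setminus A_n$ equals $(X \setminus A) \sqcup \bigsqcup_{k \ne n,\ k \ne j} A_k$, a finite union of sets closed in $X$, so $A_j$ is clopen in $X \setminus A_n$. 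Being connected, $A_j$ is a bounded component of $X \setminus A_n$, contradicting the solidness of $A_n$ (whose complement in the noncompact $X$ must consist only of unbounded components). Thus $|I| = 1$ in both cases, and the only solid partition of $A$ is $A$ itself.
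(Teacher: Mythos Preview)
Your proof is correct and follows essentially the paper's approach: lift the partition to $\hat X$ via Lemma \ref{hatXsoli}, apply Corollary \ref{genconn} to force the open pieces down to at most one, and finish with a connectedness/clopen argument. The only variation is in the open-$A$ subcase with one open piece $A_n$ and a nonempty closed piece $A_j$: the paper invokes Remark \ref{ReIrrPart}(a1) on the resulting solid partition of $\hat X$, whereas you argue directly in $X$ that $A_j$ would be a bounded component of $X \setminus A_n$, contradicting solidness of $A_n$---the same idea unpacked.
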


\begin{proof} 
Suppose first that $V \in \mathscr{O}_{s}^{*}(X)$ and its solid partition is given by  
$ V = \bigsqcup_{i=1}^n C_i \sqcup \bigsqcup_{j=1}^m U_j $, 
where each $C_i \in \mathscr{K}_{s}(X)$ and each $U_j \in \mathscr{O}_{s}^{*}(X)$. From Lemma \ref{hatXsoli}
it follows that $\hat X \setminus V$ and each $C_i$ are closed solid sets in $\hat X$.
Since $\hat X$ has genus $0$, by Remark \ref{genconn},  
$ \hat X \setminus ((\hat X \setminus V) \sqcup \bigsqcup_{i=1}^n C_i)  = \bigsqcup_{j=1}^m U_j $
must be connected in $\hat X$. It follows that $m=1$ and we may write 
$V = \bigsqcup_{i=1}^n C_i \sqcup  U_1. $
Then $\{ U_1, \hat X \setminus V, C_1, \ldots, C_n\} $ is a solid partition of $\hat X$, and it has 
only one open set. By Remark \ref{ReIrrPart} this solid partition also has only one 
closed set in it,  and it must be $\hat X \setminus V$. So each $C_i= \emptyset$, and the solid partition of $V$ is
$V = U_1$, i.e. the set itself.

Now suppose that $C \in \mathscr{K}_{s}(X)$ and its solid partition is given by  
$ C = \bigsqcup_{i=1}^n C_i \sqcup \bigsqcup_{j=1}^m U_j $,
where each $C_i \in \mathscr{K}_{s}(X)$ and each $U_j \in \mathscr{O}_{s}^{*}(X)$. 
Then $\{ \hat X \setminus C, U_1, \ldots, U_m, C_1, \ldots, C_n\}$ is a solid partition of $\hat X$.
Again by Remark \ref{genconn},
$ \hat X \setminus \bigsqcup_{i=1}^n C_i =  (\hat X \setminus C) \sqcup U_1 \ldots \sqcup U_m $
must be connected in $\hat X$. It follows that $U_j = \emptyset$ for $j=1, \ldots, m$.
Then by connectedness of $C$ we see that the solid partition of $C$ must be the set itself.
\end{proof}

\begin{remark} \label{easyRn} 
From Lemma \ref{nosopart}
it follows that for any locally compact  noncompact  space whose one-point compactification has genus 0 
the last condition of Definition \ref{DeSSFLC} holds trivially. This is true, for example, 
for $X = \mathbb{R}^n $, half-plane in $\mathbb{R}^n$ with $n \ge 2$, or for a punctured ball in $\mathbb{R}^n$ with the relative topology. 
\end{remark}

\begin{example}
Lemma \ref{nosopart} may not be true for spaces whose  one-point compactification 
has genus greater than $0$.  For example, let $X$ be an infinite strip  $\mathbb{R} \times [0,1]$ without the ball 
of radius $1/4$ centered at $(-1/2, 1/2)$,
so  $\hat X$ has genus greater than $0$.   
It is easy to give an example of  a solid partition of a bounded solid set 
(say, rectangle $[0,n] \times [0,1]$ or $(0,n) \times [0,1]$) which consists of $n$ solid sets 
(rectangles of the type $(i, i+1) \times [0,1]$ or  $[i, i+1] \times [0,1]$) for any given odd $n \in \mathbb{N}, \, n >1$.
\end{example}

We are ready to give examples of topological measures on locally compact spaces. 

\begin{example} \label{ExDan2pt}
Let $X$ be a locally compact space whose one-point compactification has genus 0. 
Let $\lambda$ be a real-valued topological measure on $X$ 
(or, more generally, a real-valued deficient topological measure on $X$; 
for definition and properties of deficient topological measures on locally compact spaces see \cite{Butler:DTMLC}). 
Let $P$ be a set of two distinct points. 
For each $A  \in \mathscr{A}_{s}^{*}(X)$ let $ \nu(A) = 0$ if $\sharp A = 0$,  $ \nu(A) = \lambda(A) $ if $\sharp A = 1$, and 
$ \nu(A) = 2 \lambda(X)$ if $\sharp A = 2$,
where $ \sharp A$ is the number of points in $ A \cap P$.
We claim that $\nu$ is a solid-set function. 
By Remark \ref{easyRn} we only need to check the first three conditions of 
Definition \ref{DeSSFLC}. The first one is easy to see. 
Using Lemma \ref{LeCsInside} it is easy to verify conditions \ref{regul} and \ref{regulo} 
of Definition \ref{DeSSFLC}. 
The solid-set function $\nu$ extends to a unique compact-finite topological measure on $X$. 
Suppose, for example, that  $ \lambda$ is the Lebesgue measure on $X = \mathbb{R}^2$, the set $P$ consists of two points
$p_1 = (0,0)$ and $p_2 = (2,0)$. Let $K_i$ be the closed ball of radius $1$ centered at $p_i$ for $i=1,2$. Then 
$K_1, K_2$ and $ C= K_1 \cup K_2$ are compact solid sets, $\nu(K_1) = \nu(K_2) = \pi, \,  \nu(C) = 4 \pi$. Since 
$\nu$ is not subadditive, $\nu$ is a topological measure that is not a measure. Note that $\nu(X) = \infty$. 
\end{example}

\noindent
The next two examples are adapted from \cite[Example 2.2]{Aarnes:LC} and are related to Example \ref{Aatm}.

\begin{example} \label{puncdisk}
Let $X$ be the unit disk on the plane with removed origin. $X$ 
is a locally compact  Hausdorff space with respect to the relative topology. 
Any subset of $X$ whose closure in $\mathbb{R}^2$ contains the origin is unbounded in $X$.
For $A \in \mathscr{A}_{s}^{*}(X)$ (since $A$ is also a solid subset of the unit disk by Lemma \ref{hatXsoli})  we define 
$\mu' (A) = \mu(A)$ where $\mu$ is the solid-set function on the unit disk 
from Example \ref{Aatm}.  From Remark \ref{easyRn}, Lemma \ref{LeCsInside}, 
and the fact that $\mu$ is a solid-set function
on $\hat X$  we see that $\mu'$ is a solid-set function on $X$. By Theorem \ref{extThLC}
$\mu'$ extends uniquely to a topological measure on $X$, which we also call $\mu'$.  
Note that $\mu'$ is simple. We claim that $\mu'$ 
is not a measure.  Let $U_1 = \{ z \in X:  \ Im \ z > 0\}, \ U_2 =   \{ z \in X:  \ Im \ z < 0\}$ and 
$F =  \{ z \in X:  \ Im \ z = 0\}$. Then $U_1, U_2$ 
are open (unbounded) in $X$  and $F$ is a closed (unbounded) set in $X$ consisting of 
two disjoint segments.  Note that $X= F \cup U_1 \cup U_2$. 
Using Remark \ref{extsumme} we calculate $\mu'(F) = \mu'(U_1) = \mu'(U_2) =0$.
The boundary of the disk, $C$, is a compact connected set, $X \setminus C$ is unbounded in $X$, so $C \in \mathscr{K}_{s}(X)$. 
Since $\mu'(C) = 1$, we have $\mu'(X) =1$. Thus, $\mu'$ is not subadditive, so it is not a measure. 

This example also shows that on the unit disk without the origin (a locally compact space)
finite additivity of topological measures holds on $\mathscr{K}(X) \cup \mathscr{O}(X)$ by 
Definition \ref{TMLC}, but fails on $\mathscr{C}(X) \cup \mathscr{O}(X)$. This is in contrast to topological measures on the unit disk
(and on all compact spaces), where finite additivity holds on $\mathscr{C}(X) \cup \mathscr{O}(X)$. 
\end{example}

\begin{example} \label{linetm}
Let $X = \mathbb{R}^2, \ l$ be a straight line and $p$ a point of $X$ not on the line $l$. 
For $A \in \mathscr{A}_{s}^{*}(X)$  define $\mu(A) = 1$ if $A \cap l \neq \emptyset$ and $p \in A$; otherwise,
let $\mu(A) =0$.  Using Lemma \ref{LeCsInside} 
it is easy to verify the first three conditions of Definition \ref{DeSSFLC}.
From Remark \ref{easyRn} it follows that $\mu$ is a solid-set function on $X$. 
By Theorem \ref{extThLC} $\mu$ extends uniquely to a topological measure on $X$, 
which we also call $\mu$.  Note that $\mu$ is simple. We claim that $\mu$ is not a measure. 
Let $F$ be the  closed half-plane determined by $l$ which does not contain $p$.  
Then using Remark \ref{extsumme} we calculate $\mu(F) = \mu(X \setminus F) = 0$, 
and $\mu(X) = 1$. Failure of subadditivity shows that $\mu$ is not a measure.

The sets $F$ and $X \setminus F$ are both unbounded.  Now take a bounded open disk 
$V$ around  $p$ that does not intersect $l$. Then 
$ X = V \sqcup (X \setminus V), $
where $V \in \mathscr{O}^{*}(X), \ \mu(V) = \mu(X \setminus V) = 0$, while $\mu(X) =1$.

This example also shows that on a locally compact space
finite additivity of topological measures holds on $\mathscr{K}(X) \cup \mathscr{O}(X)$ by 
Definition \ref{TMLC}, but fails on $\mathscr{C}(X) \cup \mathscr{O}(X)$. 
It fails even in the situation $X = V \sqcup F$, where $ V $ is a bounded open set, and $F$ is a closed set.
\end{example}

The last two examples suggest 
the following result.

\begin{theorem} \label{tmXtoXha}
Let $X$ be a noncompact  locally compact, connected, locally connected space whose one-point compactification $\hat X$ has genus 0. 
Suppose $\nu$ is a solid-set function on $\hat X$.  For $A \in \mathscr{A}_{s}^{*}(X)$  define $\mu(A) = \nu(A)$. 
Then $\mu$ is a solid-set function on $X$  and, thus, extends uniquely to a topological 
measure on $X$.
\end{theorem}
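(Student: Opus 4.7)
The plan is to verify the four axioms \ref{superadd}--\ref{solidparti} of Definition \ref{DeSSFLC} for $\mu$ on $X$, after which Theorem \ref{extThLC} supplies the unique extension to a compact-finite topological measure. The central bridging observation I would first establish is that for a compact $K \subseteq X$, one has $K \in \mathscr{K}_{s}(X)$ iff $K \in \mathscr{K}_{s}(\hat X)$. The implication ($\Leftarrow$) is Lemma \ref{hatXsoli}. For ($\Rightarrow$), I would argue by contradiction: suppose $B$ is a bounded component of $X \setminus K$; since $X$ is locally connected and open in $\hat X$, $B$ is open in $\hat X \setminus K$, and boundedness of $B$ forces $\infty \notin \overline{B}^{\hat X} = \overline{B}^X$, so $B$ is also closed in $\hat X \setminus K$. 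Being a nonempty proper clopen subset contradicts the connectedness of $\hat X \setminus K$. Combined with Lemma \ref{hatXsoli}, this also yields $\mathscr{O}_{s}^{*}(X) \subseteq \mathscr{O}_{s}(\hat X)$.

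With the bridging lemma in hand, axioms \ref{superadd}, \ref{regul} and \ref{solidparti} are short. For \ref{superadd}: a disjoint family $\bigsqcup_i C_i \subseteq C$ in $\mathscr{K}_{s}(X)$ transplants verbatim into $\mathscr{K}_{s}(\hat X)$, so the inequality follows from the corresponding property of $\nu$. For \ref{regul}: given $U \in \mathscr{O}_{s}^{*}(X) \subseteq \mathscr{O}_{s}(\hat X)$, applying \ref{regul} for $\nu$ on the compact space $\hat X$ produces compact solid approximants $K' \subseteq U \subseteq X$ in $\mathscr{K}_{s}(\hat X)$, and the bridging lemma places each such $K'$ in $\mathscr{K}_{s}(X)$. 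For \ref{solidparti}: Lemma \ref{nosopart} asserts that every solid partition of any $A \in \mathscr{A}_{s}^{*}(X)$ is the trivial one-element partition, so the required additivity holds vacuously.

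The main obstacle is axiom \ref{regulo}, since an open solid set in $\hat X$ containing a compact solid $K \subseteq X$ may contain the point $\infty$ and thus fail to lie in $\mathscr{O}_{s}^{*}(X)$. To handle this, I would fix $\varepsilon > 0$ and use \ref{regulo} for $\nu$ on $\hat X$ to produce $V \in \mathscr{O}_{s}(\hat X)$ with $K \subseteq V$ and $\nu(V) < \nu(K) + \varepsilon$. Then $V \cap X$ is an open subset of $X$ containing $K$, so part \ref{KWsol} of Lemma \ref{LeCsInside} yields $W \in \mathscr{O}_{s}^{*}(X)$ with $K \subseteq W \subseteq \overline{W} \subseteq V \cap X \subseteq V$. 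By Lemma \ref{hatXsoli}, $W \in \mathscr{O}_{s}(\hat X)$, and monotonicity of $\nu$ (from Lemma \ref{PropSsfLC}) gives $\mu(W) = \nu(W) \leq \nu(V) < \mu(K) + \varepsilon$. This establishes \ref{regulo} and completes the verification that $\mu$ is a solid-set function on $X$.
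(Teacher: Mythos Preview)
Your argument is correct and follows essentially the same route as the paper's proof, which also verifies \ref{superadd}--\ref{regulo} via Lemma~\ref{hatXsoli} and Lemma~\ref{LeCsInside}, and dispatches \ref{solidparti} with Remark~\ref{easyRn} (i.e., Lemma~\ref{nosopart}). One small slip: you have the labels of the two implications in your bridging observation reversed. Lemma~\ref{hatXsoli} is the forward direction $K\in\mathscr{K}_s(X)\Rightarrow K\in\mathscr{K}_s(\hat X)$, and the contradiction argument you give (a bounded component $B$ of $X\setminus K$ would be a proper clopen subset of the connected set $\hat X\setminus K$) proves the reverse direction $K\in\mathscr{K}_s(\hat X)\Rightarrow K\in\mathscr{K}_s(X)$. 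The mathematics is fine once the labels are swapped. Your explicit ``iff'' form of the bridge is a helpful clarification that the paper leaves implicit; the paper would instead handle the approximation in \ref{regul} by sandwiching via Lemma~\ref{LeCsInside} (as you do for \ref{regulo}), rather than by transferring solidity back from $\hat X$ to $X$.
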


\begin{proof}
Let $A \in \mathscr{A}_{s}^{*}(X)$. By Lemma \ref{hatXsoli},  $A$ is a solid set in $\hat X$. 
Using Lemma \ref{LeCsInside}, the fact that $\nu$ is a solid-set function on $\hat X$, and that 
a bounded solid set does not contain $\infty$ it is easy to verify the first three conditions of Definition \ref{DeSSFLC}.
By Remark \ref{easyRn} $\mu$ is a solid-set function on $X$. 
\end{proof}

Theorem \ref{tmXtoXha} allows us to obtain a large variety of topological measures on a locally compact space from
examples of topological measures on compact spaces. 

\begin{example} \label{nvssf}
Let $X$ be a locally compact space whose one-point compactification has genus 0. 
Let $n$ be a natural number. Let $P$ be the set of distinct $2n+1$ points.
For each $A  \in \mathscr{A}_{s}^{*}(X)$ let $ \nu(A) = i/n$ if $\sharp A = 2i$ or $2i+1$, where $ \sharp A$ is the number of points in $ A \cap P$.
By Example \ref{n+1vtm}  and Theorem \ref{tmXtoXha}  $\nu$ is a solid-set function on $X$; it extends to a unique topological measure on $X$ 
that assumes values $0, 1/n, \ldots, 1$. 
\end{example}

We conclude with an example of another  infinite topological measure.

\begin{example} \label{mojexLC}
Let $X=\mathbb{R}^n$ for any $n \ge 2$, and $\lambda$ be the Lebesque measure on $X$.
For $U \in \mathscr{O}_{s}^{*}(X)$ define $\mu(U) =0$ if $0 \le \lambda(U) \le 1$ and $\mu(U) = \lambda(U)$ if
$\lambda(U) >1$. For $C \in \mathscr{K}_{s}(X)$ define $\mu(C) = 0$ if $0 \le \lambda(C) < 1$ and 
$\mu(C) =\lambda(C)$ if $\lambda(C) \ge 1$.  It is not hard to check the first three conditions 
of Definition \ref{DeSSFLC}. From Remark \ref{easyRn} it follows that $\mu$ is 
a solid-set function on $X$. 
By Theorem \ref{extThLC} $\mu$ extends uniquely to a topological measure on $X$, 
which we also call $\mu$.  Note that $\mu(X) = \infty$. $\mu$ is not subadditive, for  
we may cover a compact ball with Lebesque measure greater than 1 by finitely many balls 
of Lebesque measure less than 1. Hence, $\mu$ is not a measure.
\end{example}

The vast majority of papers dealing with quasi-linear functionals and topological measures (including papers in symplectic geometry) 
consider a compact underlying space, finite topological measures and bounded quasi-linear functionals.
In symplectic geometry one often sees quasi-linear functionals corresponding to simple topological measures. 
This section shows how to obtain a variety of finite and infinite topological measures on locally compact spaces.

{\bf{Acknowledgments}}:
This  work was conducted at 
the Department of Mathematics at the University of California Santa Barbara. The author would like to thank the department
for its hospitality and supportive environment.


\end{document}